\definecolor{red}{rgb}{1,0,0}
\definecolor{green}{rgb}{0,1,0}
\definecolor{blue}{rgb}{0,0,1}
\definecolor{refkey}{gray}{.625}
\definecolor{labelkey}{gray}{.625}
\let\oldmarginpar\marginpar
\renewcommand\marginpar[1]{\-\oldmarginpar[\raggedleft\footnotesize #1]%
{\raggedright\footnotesize #1}}
\def\dar[#1]{\ar@<2pt>[#1]\ar@<-2pt>[#1]} 
\theoremstyle{plain}
\newtheorem{prop}{Proposition}[section]
\newtheorem{lem}[prop]{Lemma}
\newtheorem{cor}[prop]{Corollary}
\newtheorem{thm}[prop]{Theorem}
\newtheorem*{prop*}{Proposition}
\newtheorem*{lem*}{Lemma}
\newtheorem*{sublem*}{Sublemma}
\newtheorem*{cor*}{Corollaire}
\newtheorem*{thm*}{Theorem}
\newtheorem*{hypo*}{Hypothesis}
\newtheorem*{question*}{Question}
\newtheorem*{conjecture*}{Conjecture}
\newtheorem*{scholum*}{Scholum}
\newtheorem{defn}[prop]{Definition}
\newtheorem*{defn*}{Definition}
\newtheorem*{fact*}{Fact}
\newtheorem{rmk}[prop]{Remark}
\newtheorem*{rmk*}{Remark}
\newtheorem*{rmks*}{Remarks}
\newtheoremstyle{slanted}
  {3pt}
  {3pt}
  {\slshape}
  {}
  {\bfseries}
  {.}
  {.5em}
  {}
\theoremstyle{slanted}
\newtheorem{ex}[prop]{Example}
\newtheorem*{ex*}{Example}
\newtheorem*{exs*}{Examples}
\newtheorem*{notation*}{Notation}
\theoremstyle{definition}
\newtheorem*{con*}{Construction}
\newtheorem*{note*}{Note}
\theoremstyle{remark}
\newtheorem*{warning*}{Warning}
\newtheorem*{shortnote*}{Note}
\newtheorem*{claim*}{Claim}
\newtheorem*{axiom*}{Axiom}
\DeclareMathOperator{\sgn}{sgn}
\DeclareMathOperator{\iso}{\tilde{\Theta}}
\newcommand{\RR}{\mathbb{R}}
\newcommand{\ZZ}{\mathbb{Z}}
\newcommand{\thalf}{\tfrac{1}{2}}
\newcommand{\rond}{\circ}
\newcommand{\cinf}[1]{C^{\infty}(#1)}
\newcommand{\sections}[1]{\Gamma(#1)}
\newcommand{\XX}{\mathfrak{X}} 
\newcommand{\OO}{\Omega} 
\newcommand{\xto}[1]{\xrightarrow{#1}}
\newcommand{\injection}{\hookrightarrow}
\newcommand{\surjection}{\twoheadrightarrow}
\newcommand{\ortho}{^{\perp}} 
\newcommand{\inv}{^{-1}}
\newcommand{\mfg}{\mathfrak{g}}
\newcommand{\DD}{\mathcal{D}}
\newcommand{\ld}[1]{L_{#1}}
\newcommand{\duality}[2]{\langle #1| #2\rangle}
\newcommand{\ip}[2]{(#1,#2)}
\newcommand{\cb}[2]{\llbracket #1,#2\rrbracket}
\newcommand{\lb}[2]{[#1,#2]}
\newcommand{\db}[2]{#1\circ #2}
\newcommand{\anchor}{a}
\newcommand{\dee}{d}
\newcommand{\dsmile}{\breve{\mathrm d}}
\newcommand{\LieDer}{L}
\newcommand{\inserts}{\iota}
\newcommand{\qm}{-}
\newcommand{\LieG}{\mfg}
\newcommand{\defbe}{:=}
\newcommand{\Connection}{\kappa}
\newcommand{\Curvature}{R}
\newcommand{\ConnectionDer}{\nabla}
\newcommand{\DiffConnection}{J}
\newcommand{\JJJ}{J}
\newcommand{\algebroidlb}[2]{[#1,#2]_{\Abd}}
\newcommand{\lbG}[2]{[#1,#2]^\mathcal{G}}
\newcommand{\ipG}[2]{(#1,#2)^\mathcal{G}}
\newcommand{\lbLieG}[2]{[#1,#2]^{\LieG}}
\newcommand{\ipLieG}[2]{(#1,#2)}
\newcommand{\Abd}{\mathcal{A}}
\newcommand{\Abds}{\mathcal{A}^*}
\newcommand{\PontryaginCocycle}{\langle\Curvature^{\Connection}\wedge\Curvature^{\Connection}\rangle}
\newcommand{\PontryaginCocycleR}{\langle\Curvature\wedge\Curvature\rangle}
\newcommand{\Fs}{F^*}
\newcommand{\HForm}{\mathcal{H}}
\newcommand{\PForm}{\mathcal{P}}
\newcommand{\dF}{d^F}
\newcommand{\varphis}{\varphi^*}
\newcommand{\CP}{\text{c.p.}}
\newcommand{\tauinverse}{\tau^{-1}}
\newcommand{\inverse}{^{-1}}
\newcommand{\Econnection}{\nabla^E}
\newcommand{\Fconnection}{\nabla^F}
\newcommand{\splitting}{\lambda}
\newcommand{\ProjectionTo}[1]{\mathrm{Pr}_{#1}}
\newcommand{\GG}{\mathcal{G}}
\newcommand{\good}{coherent}
\newcommand{\anchorUps}{\anchor^*}
\newcommand{\class}[1]{ [{#1}]}
\newcommand{\Bijectivemap}{ \mathfrak{b} }
\newcommand{\Bijectivemapinverse}{\mathfrak{i}}
\newcommand{\ICRC}{\mathfrak{R}}
\newcommand{\ECCC}{\mathfrak{C}}
\newcommand{\Chain}{\mathbf{C}}
\newcommand{\ample}{\mathcal{A}}
\newcommand{\ipg}[2]{\ip{#1}{#2}^{\GG}}
\newcommand{\gq}{\boldsymbol{q}}
\newcommand{\gr}{\boldsymbol{r}}
\newcommand{\gs}{\boldsymbol{s}}
\newcommand{\gt}{\boldsymbol{t}}
\newcommand{\ax}{x}
\newcommand{\fx}{x}
\newcommand{\fy}{y}
\newcommand{\fz}{z}
\newcommand{\dissection}{\Psi}
\newcommand{\dample}{\check{d}}
\newcommand{\hoist}{\kappa}
\newcommand{\hoistcurvature}{R^\kappa}
\newcommand{\subcomplex}{\Chain_{\leftrightarrow}}
\newcommand{\fsgf}{F^*\oplus\GG\oplus F}
\newcommand{\ISO}{\Theta}
\begin{document}

\title{On Regular Courant Algebroids}
\author[Chen]{Zhuo Chen}
\thanks{Research partially supported by NSFC grant 10871007.}
\address{Tsinghua University, Department of Mathematics}
\email{\href{mailto:zchen@math.tsinghua.edu.cn}{\texttt{zchen@math.tsinghua.edu.cn}}}
\author[Sti\'enon]{Mathieu Sti\'enon}
\address{Universit\'e Paris Diderot\footnote{(Paris 7)}, Institut de Math\'ematiques de Jussieu\footnote{(CNRS, UMR 7586)}}
\email{\href{mailto:stienon@math.jussieu.fr}{\texttt{stienon@math.jussieu.fr}}}
\author[Xu]{Ping Xu}
\thanks{Research partially supported by NSF grants DMS-0605725 and DMS-0801129.}
\address{Pennsylvania State University, Department of Mathematics}
\email{\href{mailto:ping@math.psu.edu}{\texttt{ping@math.psu.edu}}}

\begin{abstract}
For any regular Courant algebroid, we construct a characteristic class
\textit{\`a la} Chern-Weil. This intrinsic invariant of the Courant algebroid
is a degree-$3$ class in its naive cohomology.
When the Courant algebroid is exact, it reduces to the \v{S}evera class (in $H^3_{DR}(M)$).
On the other hand, when the Courant algebroid is a quadratic Lie algebra $\mfg$,
it coincides with the class of the Cartan $3$-form (in $H^3(\mfg)$).
We also give a complete classification of regular Courant algebroids
and discuss its relation to the characteristic class.
\end{abstract}

\maketitle

\tableofcontents

\section*{Introduction}

Courant algebroids were introduced in~\cite{MR1472888} as a way to
merge the concept of Lie bialgebra and the bracket on
$\XX(M)\oplus\OO^1(M)$ first discovered by Courant~\cite{MR998124}
--- here $M$ is a smooth manifold. Roytenberg gave an equivalent
definition phrased in terms of the Dorfman
bracket~\cite{math/9910078}, which highlighted the relation of
Courant algebroids to $L_{\infty}$-algebras~\cite{MR1183483}
observed by Roytenberg \& Weinstein~\cite{MR1656228}.

Despite its importance, the subject suffered from the lack of
examples for a long time. One important class of Courant algebroids,
called exact Courant algebroids, was discovered by
\v{S}evera~\cite{Severa}. A Courant algebroid $E$ is said to be
exact if its underlying vector bundle fits into an exact sequence
$0\to T^*M\xto{\rho^*} E\xto{\rho}TM\to  0$, where $\rho$ is the
anchor map. \v{S}evera proved that the isomorphism classes of exact
Courant algebroids are classified by a degree-$3$ class in the de
Rham cohomology of $M$: the \v{S}evera class. Furthermore, the
structure of  transitive Courant algebroids --- Courant algebroids
with surjective anchors --- was described independently by
Vaisman~\cite{MR2178250}, \v{S}evera (in a private correspondence
with Weinstein~\cite{Severa}) and Bressler~\cite{MR2360313}.
\v{S}evera and Bressler also classified transitive Courant
algebroids as extensions of transitive Lie
algebroids~\cites{MR2360313,Severa}. Indeed \v{S}evera also
 outlined some nice ideas of classification of transitive Courant
algebroids in \cite{Severa}.

In \cite{MathieuXu}, the second and third authors   introduced
the  modular class of Courant algebroids,
a degree 1 characteristic class in the naive cohomology
$H^1_{naive}(E)$ of the Courant algebroid $E$.
It is natural to ask whether there is a
degree 3 characteristic  class for a general Courant algebroid resembling
the Severa class, and if so, what role is played by such a class
in the classification of Courant algebroids.
The main purpose of this paper is to answer these questions for regular Courant
algebroids, that is, Courant algebroids with a constant rank anchor.
Such Courant algebroids $E$ are particularly easier to handle,
for their associated characteristic distributions $\rho(E)$
do not have any singularity.
  Note that
characteristic classes of Lie algebroids were studied
by Evens-Lu-Weinstein \cite{ELW}, Crainic and
Fernandes \cite{CF1, Fernandes}. It would be
interesting to explore if there is any intrinsic connection
between these constructions.


To any regular Courant algebroid, we will construct a degree 3
class, called the characteristic class, in the naive cohomology
$H^3_{naive}(E)$ of the Courant algebroid, an analogue of \v{S}evera's
class. Note that when $E$ is an exact Courant algebroid,
$H^3_{naive}(E)$ is isomorphic to the de Rham cohomology
$H^3_{DR}(M)$. However, unlike the exact case, $H^3_{naive}(E)$ does
not classify regular Courant algebroids. The classification problem
is much subtler in this case.

Given a regular Courant algebroid $E$ with anchor $\rho$,
$\ample:=E/(\ker\rho)\ortho$ is clearly a regular Lie algebroid,
i.e.\ a Lie algebroid with a constant rank anchor.  It
is called the ample Lie algebroid of $E$.
The kernel $\GG$ of the anchor of $\ample$
 is a bundle of quadratic Lie algebras,
which satisfies certain compatibility conditions.
Therefore, the Lie algebroid $\ample$ is quadratic (see Definition~\ref{def:quad}).

The first natural question is whether every
quadratic Lie algebroid $\ample$ arises in this way.
It turns out that there is an obstruction: the first Pontryagin class,
an element in $H^4(F)$ naturally associated to any quadratic Lie algebroid.
Here $F$ is the image of the anchor (an integrable subbundle of $TM$)
and $H^4(F)$ stands for the leafwise de Rham cohomology of $F$.
This obstruction is similar to the one described by \v{S}evera and Bressler
in the transitive case~\cite{MR2360313,Severa}.

To recover a Courant algebroid from a given a quadratic Lie algebroid $\ample$
with vanishing first Pontryagin class, one needs an extra piece of data:
a coherent form $C$. This is a closed $3$-form on
the Lie algebroid $\ample$ satisfying certain compatibility
conditions. In this case $(\ample,C)$ is called a
characteristic pair.

An equivalence is introduced on characteristic pairs. Let
$H^{\bullet}_{\leftrightarrow}(\Abd)$ denote  the cohomology groups
of the subcomplex $\subcomplex^\bullet(\Abd)$ of
$(\sections{\wedge^\bullet\Abd^*},\dee)$, where
$\subcomplex^k(\Abd)$ is made of the sections of $\wedge^k\Abd^*$
which are annihilated by all sections of $\wedge^k\GG$. Roughly
speaking, two coherent forms $C_1$ and $C_2$ on $\ample$ are
equivalent if and only if the class of $C_1-C_2$ in
$H^3_{\leftrightarrow}(\Abd)$ is zero. We prove that
there is a one-to-one correspondence between regular Courant
algebroids up to isomorphisms and equivalence classes of
characteristic pairs.

The inclusion $i:\GG\to\ample$ is a Lie algebroid morphism.
Therefore, it induces a morphism $i^*:H^3(\ample)\to H^3(\GG)$.
We denote the class of the well-known Cartan $3$-form of Lie theory in $H^3(\GG)$ by $\alpha$.

The main result of the paper can be summarized as the following

\begin{thm*}
\begin{enumerate}
\item There is a natural map
from regular Courant algebroids to
 quadratic Lie algebroids
 with vanishing first Pontryagin class.
\item
For any quadratic Lie algebroid $\ample$ with vanishing first
Pontryagin class, the isomorphism classes of Courant
algebroids whose ample Lie algebroids are isomorphic
to $\ample$ are  parametrized by
$\frac{\anchorUps H^3(F)}{\mathbb{I}}\cong
\frac{H^3(F)}{(\anchorUps)\inverse(\mathbb{I})}$.
 Here $\mathbb{I}$
is a certain  abelian subgroup of  $a^* H^3(F)\subset
H^3_{\leftrightarrow}(\Abd)$ and $a$ denotes the anchor of $\ample$.

\item For any regular Courant
algebroid $E$, there is a degree 3 characteristic class, which is
a cohomology class in  $(i^*)^{-1}(\alpha)\subset H^3(\Abd)$.
 Here  $\ample$ is the ample Lie algebroid of $E$,
and $\alpha\in
H^3(\GG)$ is the Cartan $3$-form.
\end{enumerate}
\end{thm*}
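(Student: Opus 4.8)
The three statements are the three logical stages of the paper, and I would establish them in turn: (a) that $E\mapsto\ample:=E/(\ker\rho)\ortho$ produces a quadratic Lie algebroid whose first Pontryagin class vanishes; (b) a normal-form analysis of regular Courant algebroids, via \emph{dissections}, reducing the classification with a prescribed ample algebroid to leafwise $3$-cohomology; and (c) the extraction of the characteristic class from any dissection.

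\textbf{Part (a).} The Courant identity $\rhoUps\rho=0$ --- with $\rhoUps\colon\TsM\to E$ obtained from the pairing --- gives $(\ker\rho)\ortho\subseteq\ker\rho$; this subbundle is isotropic and, as one checks directly, stable under the Dorfman bracket, so $\ample=E/(\ker\rho)\ortho$ inherits a Lie bracket and an anchor $\anchor$ of constant rank with integrable image $F=\rho(E)$. The radical of the $E$-pairing restricted to $\ker\rho$ is exactly $(\ker\rho)\ortho$, so $\GG=\ker\rho/(\ker\rho)\ortho=\ker\anchor$ carries a fibrewise nondegenerate pairing; together with the residual $\ample$-action on $\GG$, this makes $\GG$ a bundle of quadratic Lie algebras and $\ample$ a quadratic Lie algebroid in the sense of Definition~\ref{def:quad}, the axioms being a direct translation of the Courant axioms. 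To see $p_1(\ample)=0$, observe that $E$ admits a dissection (Part (b)) and that among the data of a dissection are an $F$-connection $\ConnectionDer$ on $\GG$ preserving the pairing and a leafwise $3$-form $H\in\Omega^3(F)$ with $\dee H=\PontryaginCocycleR$; since $\PontryaginCocycleR$ represents $p_1(\ample)$, this class vanishes. This is the regular counterpart of the \v{S}evera--Bressler obstruction of the transitive case.

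\textbf{Part (b).} Fix a quadratic Lie algebroid $\ample$ with $p_1(\ample)=0$. A \emph{dissection} of a regular Courant algebroid $E$ with ample algebroid isomorphic to $\ample$ is a vector-bundle isomorphism $E\cong\fsgf$ (with $F^*\cong(\ker\rho)\ortho$) carrying the pairing to the standard one. Transporting the Courant structure across such an isomorphism turns the Courant axioms into an explicit finite system of structure equations for the transported data: an $F$-connection $\ConnectionDer$ on $\GG$ preserving the pairing, the ancillary connection and curvature terms, and a $3$-form $C\in\Gamma(\wedge^3\ample^*)$ which is $\dA$-closed and restricts on $\wedge^3\GG$ to the Cartan $3$-form --- in other words a \emph{coherent form}. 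The existence of such a $C$ is obstructed precisely by $p_1(\ample)$, so the hypothesis $p_1(\ample)=0$ is exactly what makes a characteristic pair $(\ample,C)$ exist. Reading the structure equations backwards shows conversely that every characteristic pair reconstructs a regular Courant algebroid on $\fsgf$, the Courant axioms being equivalent to the coherence and $\dA$-closedness of $C$. To compare two dissections of a given $E$, one observes that an isomorphism of Courant algebroids covering the identity of $\ample$ becomes, in dissections, a gauge transformation of $\fsgf$, and a cochain-level computation shows that such a transformation alters $C$ by a $\dA$-exact element of the subcomplex $\subcomplex^\bullet(\ample)$, while every such alteration arises in this way. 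Hence isomorphism classes of regular Courant algebroids with ample algebroid $\ample$ correspond to coherent forms modulo the equivalence generated by $\subcomplex$-exactness and by the action of the automorphisms of $\ample$. Tracking this equivalence in cohomology, the coherent forms form a torsor modeled on (a group canonically identified with) $\anchorUps H^3(F)\subseteq H^3_{\leftrightarrow}(\ample)$, and quotienting by the translations realizable through automorphisms of $\ample$ yields $\anchorUps H^3(F)/\mathbb{I}$; the isomorphism $\anchorUps H^3(F)/\mathbb{I}\cong H^3(F)/(\anchorUps)\inverse(\mathbb{I})$ then follows formally because $\anchorUps$ is a homomorphism. The main obstacle is exactly this last stage: producing an honest, computable description of the gauge action on $C$, proving that the torsor of coherent forms is modeled on $\anchorUps H^3(F)$, and pinning down $\mathbb{I}$ --- that is, showing that automorphisms of $\ample$ can move a coherent class only within an $\mathbb{I}$-coset and that nothing further is identified. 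The verification that the structure equations are equivalent to the Courant axioms, though essential, is mechanical.

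\textbf{Part (c).} For a regular Courant algebroid $E$, let $\ample$ be its intrinsically defined ample Lie algebroid and $\GG=\ker\anchor$. Choose a dissection; by Part (b) it attaches to $E$ a coherent form $C$ and hence a class $[C]\in H^3(\ample)$. Two dissections of the \emph{same} $E$ differ by a gauge transformation, which by Part (b) changes $C$ by a $\dA$-exact form, so $[C]$ depends only on $E$: it is the characteristic class. It lies in $(i^*)\inverse(\alpha)$ because coherence forces the restriction $\iota_\GG^*C$ to equal, on the nose, the Cartan $3$-form of the bundle $\GG$ of quadratic Lie algebras, whence $i^*[C]=[\iota_\GG^*C]=\alpha$; and, consistently with Part (b), $\anchor\rond i=0$ gives $\mathbb{I}\subseteq\anchorUps H^3(F)\subseteq\ker i^*$, so $(i^*)\inverse(\alpha)$ is a union of $\mathbb{I}$-cosets. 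Under the identification relating the naive cohomology $H^3_{naive}(E)$ to $H^3(\ample)$ this is the degree-$3$ class of the abstract; specializing, when $E$ is exact one has $(\ker\rho)\ortho=\ker\rho$, hence $\GG=0$, $F=TM$, $\ample\cong TM$, $\alpha=0$, and $[C]$ is the \v{S}evera class in $H^3_{DR}(M)$, while when $E=\mfg$ is a quadratic Lie algebra one has $F=0$ and $\ample=\GG=\mfg$, so $i^*=\id$ and $[C]=\alpha$ is the class of the Cartan $3$-form.
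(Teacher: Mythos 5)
Your roadmap coincides with the paper's: part (1) via the ample Lie algebroid $\ample=E/(\ker\rho)\ortho$ and the identity $\dF\HForm=\PontryaginCocycleR$ furnished by any dissection; part (2) via characteristic pairs and the gauge action on standard Courant structures on $\fsgf$; part (3) by showing the class of the standard $3$-form is independent of the dissection and restricts on $\wedge^3\GG$ to the Cartan form. So the approach is the right one, and parts (1) and (3) are essentially complete modulo the computations you correctly identify as mechanical.

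There is, however, one concrete gap that would derail part (2) as written. You define a coherent form as a closed $3$-form restricting on $\wedge^3\GG$ to the Cartan $3$-form. The paper's Definition~\ref{Defn:good} requires in addition, relative to a hoist $\hoist:F\to\Abd$, that $C(\gr,\gs,\hoist(x))=0$ and $C(\gr,\hoist(x),\hoist(y))=\ipG{\gr}{\hoistcurvature(x,y)}$. These hoist-dependent conditions are exactly what force the difference of two coherent forms to have the special shape $\anchorUps(\varpi)+\dee\Phi_{\JJJ}$ with $\varpi$ a closed leafwise $3$-form (Lemma~\ref{Pro:allcoherentforms}); that is the statement that the coherent forms are, modulo the equivalence, a torsor over $\anchorUps H^3(F)$. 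With your weaker definition, the difference of two ``coherent'' forms is merely a closed element of $\subcomplex^3(\Abd)$, whose class can a priori be anything in $H^3_{\leftrightarrow}(\Abd)$ --- and the paper's contact-manifold remark shows $\anchorUps H^3(F)$ can be a proper (even zero) subgroup of $H^3_{\leftrightarrow}(\Abd)$, so the parametrization in statement (2) would come out wrong. Relatedly, the stage you explicitly defer --- the explicit gauge action (Propositions~\ref{GeneralIsomorphism} and~\ref{Lem:IPullBackDiffCoboundary}), the independence of the choice of hoist, and the identification of $\mathbb{I}$ as the set of classes $[\sigma^*C_0-C_0]$ for automorphisms $\sigma$ of $\Abd$ over the identity --- is precisely where the paper spends its effort; your sketch is compatible with that work but does not substitute for it, and it only goes through once coherence is given its full, hoist-relative meaning.
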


Hence, we have the ``exact sequence''
\begin{center}
\fbox{\begin{minipage}{45mm} isomorphism classes of regular Courant algebroids with characteristic distribution $F$ \end{minipage}} $\xto{\Phi}$
\fbox{\begin{minipage}{45mm} isomorphism classes of quadratic Lie algebroids with characteristic distribution $F$ \end{minipage}} $\xto{\text{FPC}}$
\fbox{$H^4(F)$} .
\end{center}

To this day, little is known about Courant algebroid cohomology~\cite{MR1958835}: Roytenberg computed it for $TM\oplus T^*M$ and Ginot \& Gr\"utzmann for transitive and some very special regular Courant algebroids~\cite{GG}. Our result should be useful for computing the Courant algebroid cohomology of arbitrary regular Courant algebroids.

{\bf Acknowledgments}
The authors thank Camille Laurent-Gengoux, Zhang-Ju Liu,
 Jim Stasheff and Alan Weinstein for useful discussions and comments.
  In  particular, we are indebted to Pavol \v{S}evera,
who provides  Example \ref{Rmk:Severa}, and
called their attention to \cite{Severa},
which led to the spotting of  an error in an early version of the paper.

\section{Preliminaries}

\subsection{Regular Courant algebroids}

A Courant algebroid consists of a vector bundle $ E\to M$, a
fiberwise nondegenerate pseudo-metric $\ip{\qm}{\qm}$, a bundle
map $\rho:E\to TM$ called anchor and a $\RR$-bilinear operation
$\db{}{}$ on $\sections{E}$ called Dorfman bracket, which, for all
$f\in\cinf{M}$ and $e_1,e_2,e_3\in\sections{E}$ satisfy the
following relations:
\begin{align}\label{algian:CAaxioms1}
& \db{e_1}{(\db{e_2}{e_3})}=
\db{(\db{e_1}{e_2})}{e_3}+\db{e_2}{(\db{e_1}{e_3})};
\\ \label{algian:CAaxioms2} &
\rho(\db{e_1}{e_2})=\lb{\rho(e_1)}{\rho(e_2)};
\\ \label{algian:CAaxioms3}
& \db{e_1}{(fe_2)}=\big(\rho(e_1)f\big)e_2+f(\db{e_1}{e_2});
\\ \label{algian:CAaxioms4}
& \db{e_1}{e_2}+\db{e_2}{e_1}=2\DD\ip{e_1}{e_2};
\\ \label{algian:CAaxioms5}
& \db{\DD f}{e_1}=0;  \\ \label{algian:CAaxioms6} &
\rho(e_1)\ip{e_2}{e_3}=\ip{\db{e_1}{e_2}}{e_3}+\ip{e_2}{\db{e_1}{e_3}},
\end{align} where $\DD:\cinf{M}\to\sections{E}$ is the
$\RR$-linear map defined by
\[ \ip{\DD f}{e}=\thalf\rho(e)f .\label{coucou}\]

The symmetric part of the Dorfman bracket is given by Eq.
\eqref{algian:CAaxioms4}. The Courant bracket is defined as the
skew-symmetric part
$\cb{e_1}{e_2}=\thalf(\db{e_1}{e_2}-\db{e_2}{e_1})$ of the Dorfman
bracket. Thus we have the relation
$\db{e_1}{e_2}=\cb{e_1}{e_2}+\DD\ip{e_1}{e_2}$.

Using the identification $\Xi:E\to E^*$ induced by the pseudo-metric $\ip{\qm}{\qm}$:
\[ \duality{\Xi(e_1)}{e_2}\defbe \ip{e_1}{e_2},\quad\forall e_1,e_2 \in E, \]
we can rewrite Eq.~\eqref{coucou} as
\[ \DD f=\thalf\Xi\inv\rho^* df .\]
It is easy to see that $(\ker\rho)\ortho$, the subbundle of $E$
orthogonal to $\ker\rho$ w.r.t.\ the pseudo-metric, coincides with
$\Xi\inv\rho^*(T^*M)$, the subbundle of $E$ generated by the image
of $\DD:\cinf{M}\to\sections{E}$. From Eq.~\eqref{algian:CAaxioms6},
it follows that $\rho(\DD f)=0$ for any $f\in\cinf{M}$. Therefore,
the kernel of the anchor is coisotropic: \[
(\ker\rho)\ortho\subset\ker\rho .\]

The spaces of sections of $\ker\rho$ and $(\ker\rho)\ortho$ are two-sided ideals of $\sections{E}$ w.r.t.\ the Dorfman bracket.

A Courant algebroid $E$ is said to be \textbf{regular} if $F:=\rho(E)$ has constant rank, in which case $F$ is an integrable distribution on the base manifold $M$.
Moreover, if $E$ is regular, then $\ker\rho$ and $(\ker\rho)\ortho$ are smooth (constant rank) subbundles of $E$ and the quotients $E/\ker\rho$ and $E/(\ker\rho)\ortho$ are Lie algebroids.
Obviously, $E/\ker\rho$ and $F$ are canonically isomorphic.
We call $E/(\ker\rho)\ortho$ the \textbf{ample} Lie algebroid associated to $E$.
It will be denoted by the symbol $\ample_E$.

The inclusions
\[ (\ker\rho)\ortho\subset\ker\rho\subset E \]
yield four exact sequences:
\[ \xymatrix{
0 \ar[dr] \ar@(ur,ul)[rr] & & (\ker\rho)\ortho \ar[dr] \ar@(ur,ul)[rr] & & E \ar[dr]^q
\ar@(ur,ul)[rr]^{\rho} & & \frac{E}{\ker\rho} \ar[dr] \ar@(ur,ul)[rr] & & 0 \\
& 0 \ar[ur] \ar[dr] & & \ker\rho \ar[ur] \ar[dr]_{\pi} & & \frac{E}{(\ker\rho)\ortho} \ar[ur] \ar[dr] & & 0 \ar[ur] \ar[dr] & \\ 0 \ar[ur] \ar@(dr,dl)[rr] & & 0 \ar[ur] \ar@(dr,dl)[rr]
& & \GG \ar[ur] \ar@(dr,dl)[rr] & & 0 \ar[ur] \ar@(dr,dl)[rr] & & 0
} \]

\vspace{5mm} Here $\GG:=\ker\rho/(\ker\rho)\ortho$. We use the
symbol $\pi$ (resp. $q$) to denote the projection $\ker\rho\to\GG$
(resp. $E\to\ample_E=E/(\ker\rho)\ortho$).

\subsection{Bundle of quadratic Lie algebras}

A $\cinf{M}$-bilinear and skew-symmetric bracket on $\sections{\GG}$ determined by the Dorfman bracket of $E$ through the relation
\[ \lb{\pi(r)}{\pi(s)}^{\GG}=\pi(\db{r}{s}) ,\qquad\forall r,s\in\sections{\ker\rho} ,\]
turns $\GG$ into a bundle of Lie algebras.
Moreover, the map \[ \pi(r)\otimes\pi(s)\mapsto\ip{r}{s} \]
is a well-defined nondegenerate symmetric and ad-invariant
pseudo-metric on $\GG$, which we will denote by the symbol $\ipG{\qm}{\qm}$.
Hence $\GG$ is a bundle of quadratic Lie
algebras\footnote{A Lie algebra is said to be quadratic if there
exists a nondegenerate, ad-invariant inner product on its underlying
vector space.}.

Note that $\GG$ is also a module over $\ample_E$; the representation is given by
\[ x\cdot\pi(r) =  \lb{x}{q(r)} , \qquad \forall\; x\in\sections{\ample_E},\; r\in\sections{\ker\rho} .\]
This representation is compatible with the pseudo-metric on $\GG$:
\begin{equation}\label{Prop:fatofEquadratic} a(\ax)\ipg{\gr}{\gs}=\ipg{\ax\cdot\gr}{\gs}+\ipg{\gr}{\ax\cdot\gs}, \qquad \forall \; \ax\in\sections{\ample_E},\; \gr,\gs\in\sections{\GG} .\end{equation}

\subsection{Dissections of regular Courant algebroids}

Let $E$ be a regular Courant algebroid with characteristic distribution $F$ and bundle of quadratic Lie algebras $\GG$. A \textbf{dissection} of $E$ is an isomorphism of vector bundles
\[ \dissection:F^*\oplus\GG\oplus F\to E \]
such that
\[ \ip{\dissection(\xi+\gr+\fx)}{\dissection(\eta+\gs+\fy)}
=\thalf\duality{\xi}{\fy}+\thalf\duality{\eta}{\fx}+\ipg{\gr}{\gs}
,\]
for all $\xi,\eta\in\sections{F^*}$, $\gr,\gs\in\sections{\GG}$ and $\fx,\fy\in\sections{F}$.
Such an isomorphism transports the Courant algebroid structure of $E$ to $F^*\oplus\GG\oplus F$.

Each dissection of $E$ induces three canonical maps:
\begin{enumerate}
\item
$\ConnectionDer:\sections{F}\otimes\sections{\GG}\to\sections{\GG}$:
\begin{equation}\label{Eqt:inducedConnectionDer}
\ConnectionDer_x \gr=\ProjectionTo{\GG}(\db{x}{\gr}), \qquad\forall\; x\in\sections{F},\; \gr\in\sections{\GG};
\end{equation}
\item
$\Curvature:\sections{F}\otimes\sections{F}\to\sections{\GG}$:
\begin{equation}\label{Eqt:inducedCurvature}
\Curvature(x,y)=\ProjectionTo{\GG}(\db{x}{y}), \qquad\forall\; x,y\in\sections{F};
\end{equation}
\item
$\HForm:\sections{F}\otimes\sections{F}\otimes\sections{F}\to\cinf{M}$:
\begin{equation}\label{Eqt:inducedHForm}
\HForm(x,y,z)=\duality{\ProjectionTo{\Fs}(\db{x}{y})}{z}, \qquad\forall\; x,y,z\in\sections{F}.
\end{equation}
\end{enumerate}

\begin{prop}\label{Prop:CourantbracketIntermsofData}
\begin{enumerate}
\item The map $\ConnectionDer$ satisfies
\[ \ConnectionDer_{f\fx}\gr=f\ConnectionDer_{\fx}\gr \qquad \text{and} \qquad
\ConnectionDer_{\fx}(f\gr)=f\ConnectionDer_{\fx}\gr + \big(\fx(f)\big)\gr ,\]
for all $\fx\in\sections{F}$, $\gr\in\sections{\GG}$ and $f\in\cinf{M}$.
\item The map $\Curvature$ is skew-symmetric and $\cinf{M}$-bilinear. It can thus be regarded as a bundle map $\wedge^2F\to\GG$.
\item The map $\HForm$ is skew-symmetric and $\cinf{M}$-bilinear. It can thus be regarded as a section of $\wedge^3\Fs$.
\end{enumerate}
\end{prop}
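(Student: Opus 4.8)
The plan is to verify each of the three assertions by a direct computation using only the Courant algebroid axioms \eqref{algian:CAaxioms1}--\eqref{algian:CAaxioms6}, the defining property of a dissection, and the orthogonality structure of the splitting $E\cong F^*\oplus\GG\oplus F$. The key observation that makes everything run is that, under the dissection, $F^*$ is identified with $(\ker\rho)\ortho$, the subbundle $F^*\oplus\GG$ is identified with $\ker\rho$, and the pairing is the ``off-diagonal'' one: $F$ and $F^*$ pair with each other, $\GG$ pairs with itself via $\ipG{\qm}{\qm}$, and no other pairings are nonzero. In particular $\rho\rond\dissection$ vanishes on $F^*\oplus\GG$ and is the identity on the $F$-summand, so $\rho(x)=x$ for $x\in\sections F$ under this identification; this is what lets us replace the abstract anchor by honest vector fields in the leafwise calculus.

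For part (a), the first identity $\ConnectionDer_{f\fx}\gr=f\ConnectionDer_{\fx}\gr$ follows immediately from the $\cinf M$-linearity of the Dorfman bracket in its first slot when the first argument is paired against something in $\ker\rho$: more precisely, $\db{f\fx}{\gr}=f\,\db{\fx}{\gr}-\big(\rho(\gr)f\big)\fx$ by the standard consequence of \eqref{algian:CAaxioms3}--\eqref{algian:CAaxioms4} (using $\db{\gr}{f\fx}=(\rho(\gr)f)\fx+f\db{\gr}{\fx}$ together with the symmetrization axiom \eqref{algian:CAaxioms4}), and $\rho(\gr)=0$ since $\gr\in\sections\GG\subset\ker\rho$; now project to $\GG$. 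The Leibniz identity in the second slot, $\ConnectionDer_{\fx}(f\gr)=f\ConnectionDer_\fx\gr+(\fx(f))\gr$, is exactly \eqref{algian:CAaxioms3} with $\rho(x)=x(f)=\fx(f)$, followed by $\ProjectionTo\GG$. So $\ConnectionDer$ is a genuine $F$-connection on the vector bundle $\GG$.

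For part (b), $\cinf M$-bilinearity of $\Curvature$: in the first slot, $\db{f\fx}{\fy}=f\db{\fx}{\fy}-(\rho(\fy)f)\fx$, and $\ProjectionTo\GG$ kills the $\fx$-term (it lies in $F^*\oplus F$), giving $\Curvature(f\fx,\fy)=f\Curvature(\fx,\fy)$; combined with skew-symmetry this gives bilinearity. Skew-symmetry of $\Curvature$ comes from \eqref{algian:CAaxioms4}: $\db{\fx}{\fy}+\db{\fy}{\fx}=2\DD\ip{\fx}{\fy}$, and since $\ip{\fx}{\fy}=0$ for $\fx,\fy\in\sections F$ in a dissection, the right side vanishes, so the $\GG$-components are opposite. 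Hence $\Curvature$ is a bundle map $\wedge^2F\to\GG$ (in fact the curvature $2$-form of $\ConnectionDer$, though that is not needed here). For part (c), the same two inputs apply to $\HForm$: skew-symmetry in $x,y$ is again the $F^*$-component of \eqref{algian:CAaxioms4} with $\ip{\fx}{\fy}=0$; $\cinf M$-linearity in $x$ follows as above since the corrective $\fx$-term pairs to zero against $z\in\sections F$ (as $\ip\fx z=0$), and linearity in $z$ is built into the definition via $\duality{\qm}{z}$. The remaining symmetry, skew-symmetry in $y,z$, is the one genuinely using \eqref{algian:CAaxioms6}: writing $\rho(x)\ip{\fy}{\fz}=0$ and expanding gives $\ip{\db{x}{\fy}}{\fz}=-\ip{\fy}{\db{x}{\fz}}$, i.e.\ $\HForm(x,y,z)=-\HForm(x,z,y)$ after noting only the $F^*$-components of $\db{x}{\fy}$ and $\db{x}{\fz}$ survive the pairing against $\fz,\fy\in\sections F$. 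Together with skew-symmetry in the first two arguments this forces total skew-symmetry, so $\HForm\in\sections{\wedge^3\Fs}$.

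I do not expect any serious obstacle here: the entire proposition is ``axiom-chasing,'' and the only point requiring slight care is keeping track of which components of a Dorfman bracket survive a given pairing under the dissection — all three statements ultimately reduce to the vanishing of $\ip{\fx}{\fy}$ for $F$-sections plus the Leibniz rule \eqref{algian:CAaxioms3} and the invariance axiom \eqref{algian:CAaxioms6}. The mildly delicate step, if any, is deriving the corrected first-slot Leibniz rule $\db{f\fx}{e}=f\db{\fx}{e}-(\rho(e)f)\fx$ from the listed axioms (it is not axiom \eqref{algian:CAaxioms3}, which is about the \emph{second} slot), but this is standard and follows from \eqref{algian:CAaxioms3} applied to $\db{e}{f\fx}$ together with the symmetric-part axiom \eqref{algian:CAaxioms4} and $2\ip{\DD g}{\qm}=\rho(\qm)g$.
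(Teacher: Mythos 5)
The paper states this proposition without proof, and your direct verification is correct and is precisely the intended routine argument: everything reduces to the isotropy of the $F$-summand under the dissection, the Leibniz axiom~\eqref{algian:CAaxioms3}, the symmetrization axiom~\eqref{algian:CAaxioms4}, the invariance axiom~\eqref{algian:CAaxioms6}, and the derived first-slot Leibniz rule. The only caveat is that the general form of that rule is $\db{f\fx}{e}=f\db{\fx}{e}-(\rho(e)f)\fx+2\ip{\fx}{e}\DD f$, so your stated version omits the last term; but since you only ever apply it with $e$ a section of $\GG$ or of $F$, where $\ip{\fx}{e}=0$, this omission affects none of your conclusions.
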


The following lemma shows that dissections always exist.

\begin{lem}\label{Lem:existenceofsplitting}
Let $(E,\ip{\qm}{\qm},\db{}{},\rho)$ be a regular Courant algebroid. And set $F=\rho(E)$.
\begin{enumerate}
\item There exists a splitting $\splitting:F\to E$ of the short exact sequence
\begin{equation}\label{Sequence:KerrhoEF} 0\to\ker\rho\to E\xto{\rho}F\to 0 \end{equation}
whose image $\splitting(F)$ is isotropic in $E$.
\item Given such a splitting $\splitting$, there exists a unique splitting $\sigma_{\splitting}:\GG\to\ker\rho$ of the short exact sequence
\begin{equation}\label{Sequence:ImgrhoUpsKerrhoGap}
0\to(\ker\rho)\ortho\to\ker\rho\xto{\pi}\GG\to 0
\end{equation}
with image $\sigma_{\splitting}(\GG)$ orthogonal to $\splitting(F)$ in $E$.
\item Given a pair of splittings $\splitting$ and $\sigma_{\splitting}$ as above,
the map $\dissection_{\splitting}:F^*\oplus\GG\oplus F\to E$ defined by
\[ \dissection_{\splitting}(\xi+\gr+\fx)
=\tfrac{1}{2}\Xi\inv\rho^*(\xi)+\sigma_{\splitting}(\gr)+\splitting(\fx) \]
is a dissection of $E$.
\end{enumerate}
\end{lem}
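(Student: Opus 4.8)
The plan is to produce the dissection by choosing, in the order dictated by the statement, an isotropic splitting $\splitting$ of \eqref{Sequence:KerrhoEF}, then the orthogonal splitting $\sigma_\splitting$ of \eqref{Sequence:ImgrhoUpsKerrhoGap}, and finally to check that the resulting $\dissection_\splitting$ is a dissection. Everything rests on two elementary observations. Since $E$ is regular, $\ker\rho$, $(\ker\rho)\ortho$ and $\GG$ are constant-rank vector (sub)bundles, so both short exact sequences admit \emph{some} smooth splitting; the only issue is to correct an arbitrary choice by a term valued in $(\ker\rho)\ortho$. Moreover $(\ker\rho)\ortho=\Xi\inv\rho^*(\Fs)$ is isotropic (being contained in $\ker\rho$) and orthogonal to all of $\ker\rho$, while $\ip{\thalf\Xi\inv\rho^*\xi}{e}=\thalf\duality{\xi}{\rho(e)}$ for all $\xi\in\sections{\Fs}$, $e\in\sections{E}$; in particular $\Xi\inv\rho^*$ is an isomorphism $\Fs\xrightarrow{\sim}(\ker\rho)\ortho$ and the pseudo-metric realizes the tautological pairing between $(\ker\rho)\ortho$ and $E/\ker\rho\cong F$.

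For part (a), I would start from an arbitrary smooth splitting $\splitting_0\colon F\to E$ of \eqref{Sequence:KerrhoEF} and let $\beta$ be the symmetric $\cinf M$-bilinear form on $F$ with $\beta(x,y)=\ip{\splitting_0 x}{\splitting_0 y}$. Set $\splitting=\splitting_0-\thalf\Xi\inv\rho^*\bar\beta$, where $\bar\beta\colon F\to\Fs$ is $x\mapsto\beta(x,\qm)$. Since the correction takes values in $(\ker\rho)\ortho\subset\ker\rho$, still $\rho\circ\splitting=\id_F$; and expanding $\ip{\splitting x}{\splitting y}$ with the two observations above, the ``$\Xi\inv\rho^*$-against-$\Xi\inv\rho^*$'' term drops out by isotropy and what remains is $\beta(x,y)-\thalf\duality{\bar\beta(x)}{y}-\thalf\duality{\bar\beta(y)}{x}=\beta(x,y)-\beta(x,y)=0$, so $\splitting(F)$ is isotropic.

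For part (b), I would pick any smooth splitting $\sigma_0\colon\GG\to\ker\rho$ of \eqref{Sequence:ImgrhoUpsKerrhoGap} and correct it to $\sigma_\splitting=\sigma_0+\Xi\inv\rho^*\tau$, where $\tau\colon\GG\to\Fs$ is the bundle map defined by $\duality{\tau(\gr)}{x}=-\ip{\sigma_0\gr}{\splitting x}$ ($\cinf M$-linearity is clear from that of $\ip{\qm}{\qm}$ and $\splitting$). The correction lies in $\ker\pi=(\ker\rho)\ortho$, so $\pi\circ\sigma_\splitting=\id_\GG$ persists, and $\ip{\sigma_\splitting\gr}{\splitting x}=\ip{\sigma_0\gr}{\splitting x}+\duality{\tau(\gr)}{\rho\splitting x}=0$, i.e.\ $\sigma_\splitting(\GG)\perp\splitting(F)$. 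For uniqueness, if $\sigma,\sigma'$ are two splittings of $\pi$ with image orthogonal to $\splitting(F)$, then $\sigma-\sigma'$ has values in $\ker\pi=(\ker\rho)\ortho$; writing $\sigma\gr-\sigma'\gr=\Xi\inv\rho^*\zeta$, orthogonality to $\splitting(F)$ forces $\duality{\zeta}{x}=0$ for every $x\in\sections{F}$, hence $\zeta=0$.

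For part (c), $\dissection_\splitting$ is injective by a triangular argument --- applying $\rho$ to $\dissection_\splitting(\xi+\gr+x)=0$ gives $x=0$, then $\pi$ gives $\gr=0$, then $\thalf\Xi\inv\rho^*\xi=0$ gives $\xi=0$ --- and the ranks of $\fsgf$ and $E$ agree because $(\ker\rho)\ortho\cong\Fs$, $\ker\rho/(\ker\rho)\ortho=\GG$ and $E/\ker\rho\cong F$, so it is an isomorphism. The dissection metric identity I would get by expanding $\ip{\dissection_\splitting(\xi+\gr+x)}{\dissection_\splitting(\eta+\gs+y)}$ into its nine terms: the three in which a $\Xi\inv\rho^*$-factor is paired against an element of $\ker\rho$ vanish (as $(\ker\rho)\ortho\subset\ker\rho$ is orthogonal to $\ker\rho$); the two terms $\ip{\sigma_\splitting(\cdot)}{\splitting(\cdot)}$ vanish by (b); $\ip{\splitting x}{\splitting y}=0$ by (a); $\ip{\sigma_\splitting\gr}{\sigma_\splitting\gs}=\ipg{\gr}{\gs}$ straight from the definition of the metric on $\GG$ (since $\pi\sigma_\splitting=\id$); and the remaining two contribute $\thalf\duality{\xi}{y}$ and $\thalf\duality{\eta}{x}$ --- exactly the formula required of a dissection. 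The one non-formal step is the construction in (a): one must notice in advance that $(\ker\rho)\ortho$ is simultaneously isotropic and ``as large as'' $\Fs$, which is what lets the symmetric form $\beta$ be absorbed; once that is in place, (b) and (c) are bookkeeping, the only thing to keep honest being the $\cinf M$-linearity of the correction maps $\bar\beta$ and $\tau$, which follows from $\cinf M$-bilinearity of $\ip{\qm}{\qm}$ and from $\splitting_0$, $\sigma_0$, $\Xi\inv\rho^*$ being bundle maps.
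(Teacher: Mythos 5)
Your proof is correct and follows essentially the same route as the paper's: correct an arbitrary splitting $\splitting_0$ (resp.\ $\sigma_0$) by a term of the form $\Xi\inv\rho^*(\cdot)$ valued in the isotropic subbundle $(\ker\rho)\ortho\cong F^*$, using exactly the same correction maps (your $\bar\beta$ and $\tau$ are the paper's $\varphi$ and $-\psi$). You additionally spell out the uniqueness argument in (b) and the nine-term verification in (c), which the paper leaves as "obvious"; these details are accurate.
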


\begin{proof}
(a) Take any section $\lambda_0$ of $\rho:E\to F$.
Consider the bundle map $\varphi:F\to F^*$ defined by
\[ \duality{\varphi(\fx)}{\fy}=\ip{\splitting_0(\fx)}{\splitting_0(\fy)} ,\]
where $\fx,\fy\in\sections{F}$.
Then $\splitting=\splitting_0-\thalf\Xi\inv\rho^*\varphi$ is a section of $\rho:E\to F$
such that \[ \ip{\splitting{\fx}}{\splitting{\fy}}=0, \qquad \forall \fx,\fy\in\sections{F} .\]
(b) Take any section $\sigma_0$ of $\pi:\ker\rho\to\GG$.
Consider the bundle map $\psi:\GG\to F^*$ defined by
\[ \duality{\psi(\gr)}{\fx}=\ip{\sigma_0(\gr)}{\splitting(\fx)} ,\]
where $\gr\in\GG$ and $\fx\in F$.
Then $\sigma_{\splitting}=\sigma_0-\Xi\inv\rho^*\psi$ is the only section of $\pi:\ker\rho\to\GG$
such that
\[ \ip{\sigma_{\splitting}{\gr}}{\splitting{\fx}}=0, \qquad \forall\;\gr\in\GG,\;\fx\in\sections{F} .\]
(c) This is obvious.
\end{proof}


\subsection{Covariant derivatives on regular Courant algebroids}
\label{Sec:covderregca}

Let us first recall the notion of $E$-connections~\cite{AlekseevXu}.
Vaisman's metric connections are a related notion~\cite{MR2178250}.

\begin{defn}An $E$-connection (or $E$-covariant derivative)
 on $E$ is an $\RR$-bilinear map
\[ \Econnection:\sections{E}\otimes\sections{E}\to
\sections{E}:(e,s)\mapsto\Econnection_{e}s \]
such that
\begin{gather*}
\Econnection_{fe}s = f\Econnection_{e}s ,\\
\Econnection_{e}(fs) = f\Econnection_{e}s+(\rho(e)f)s ,
\end{gather*}
for all $f\in\cinf{M}$ and $e, s\in\sections{E}$.
\end{defn}

The next lemma shows that each regular Courant algebroid $E$ admits $E$-connections.

If $E$ is a regular Courant algebroid, there always exists a
torsion-free connection $\nabla^F$ on the integrable distribution
$F=\rho(E)\subset TM$.
Indeed, it suffices to consider the restriction to $F$ of the Levi-Civit\`a connection of some Riemanian metric on $M$. In the sequel, the symbol $\nabla^F$ will be used to denote a chosen torsion-free connection on $F$ and the dual connection on $F^*$ as well.

\begin{lem}
\begin{enumerate}
\item Given a dissection $\dissection$ of a regular Courant algebroid $E$
(identifying $E$ with $F^*\oplus\GG\oplus F$)
and the data $\ConnectionDer$, $\Curvature$ and $\HForm$
it induces, each torsion-free connection $\nabla^F$ on $F$
determines an $E$-connection $\Econnection$ on $E$ through the defining relation
\begin{equation}\label{Eqn:Econnectionconcstructed}
\Econnection_{\xi+\gr+\fx}(\eta+\gs+\fy)
=(\Fconnection_\fx\eta- \tfrac{1}{3} \HForm(\fx,\fy,\qm)+
(\ConnectionDer_\fx \gs +\tfrac{2}{3} \lbG{\gr}{\gs}) + \Fconnection_\fx \fy
,\end{equation}
where $\fx,\fy\in\sections{F}$, $\xi,\eta\in\sections{\Fs}$ and $\gr,\gs\in\sections{\GG}$.
\item This covariant derivative $\Econnection$ preserves the pseudo-metric:
\[ \rho(e_1)\ip{e_2}{e_3}=\ip{\Econnection_{e_1}{e_2}}{e_3}+\ip{e_2}{\Econnection_{e_1}{e_3}},
\quad\forall e_1,e_2,e_3\in\sections{E} .\]
\item Moreover, we have
\[ \rho(\Econnection_{e_1}{e_2}-\Econnection_{e_2}{e_1})=\lb{\rho(e_1)}{\rho(e_2)} .\]
\end{enumerate}
\end{lem}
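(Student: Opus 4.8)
The plan is to carry out all three verifications inside the dissected model, identifying $E$ with $\Fs\oplus\GG\oplus F$ by means of a dissection $\dissection=\dissection_\splitting$ furnished by Lemma~\ref{Lem:existenceofsplitting}. The key structural fact I will use throughout is that, under this identification, the anchor is just the projection onto the $F$-summand: since $\thalf\Xi\inv\rho^*\xi$ lies in $(\ker\rho)\ortho\subset\ker\rho$ and $\sigma_\splitting(\gr)\in\ker\rho$ while $\splitting$ splits $\rho$, one gets $\rho\big(\dissection(\xi+\gr+\fx)\big)=\fx$. Writing $e_i=\xi_i+\gr_i+\fx_i$ for $i=1,2,3$, the right-hand side of~\eqref{Eqn:Econnectionconcstructed} then has $\Fs$-component $\Fconnection_{\fx_1}\xi_2-\tfrac{1}{3}\HForm(\fx_1,\fx_2,\qm)$, $\GG$-component $\ConnectionDer_{\fx_1}\gr_2+\tfrac{2}{3}\lbG{\gr_1}{\gr_2}$, and $F$-component $\Fconnection_{\fx_1}\fx_2$ (so $\Curvature$ plays no role in this lemma).

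For (a) I check the two $E$-connection axioms directly on decomposed sections. Replacing $e_1$ by $fe_1$ multiplies each of the five summands of~\eqref{Eqn:Econnectionconcstructed} by $f$, using $\cinf{M}$-linearity of $\Fconnection$ in its direction, tensoriality of $\HForm$, the identity $\ConnectionDer_{f\fx}\gr=f\ConnectionDer_\fx\gr$ of Proposition~\ref{Prop:CourantbracketIntermsofData}(a), and the $\cinf{M}$-bilinearity of $\lbG{\qm}{\qm}$. Replacing $e_2$ by $fe_2$ and applying the Leibniz rules for $\Fconnection$ on $\Fs$, for $\ConnectionDer$ (again Proposition~\ref{Prop:CourantbracketIntermsofData}(a)) and for $\Fconnection$ on $F$ produces exactly the extra terms $(\fx_1f)\xi_2$, $(\fx_1f)\gr_2$, $(\fx_1f)\fx_2$, whose sum is $\big(\rho(e_1)f\big)e_2$; the $\HForm$- and $\lbG{\qm}{\qm}$-summands contribute nothing, being tensorial. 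Hence $\Econnection$ is a well-defined $E$-connection.

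For (b) I expand $\ip{e_2}{e_3}=\thalf\duality{\xi_2}{\fx_3}+\thalf\duality{\xi_3}{\fx_2}+\ipG{\gr_2}{\gr_3}$ and evaluate $\ip{\Econnection_{e_1}e_2}{e_3}+\ip{e_2}{\Econnection_{e_1}e_3}$ term by term. The $\Fs$--$F$ pairings recombine into $\thalf\fx_1\duality{\xi_2}{\fx_3}+\thalf\fx_1\duality{\xi_3}{\fx_2}$, because on $\Fs$ we use the connection dual to $\Fconnection$ on $F$. The two $\HForm$-terms are $-\tfrac{1}{6}\HForm(\fx_1,\fx_2,\fx_3)-\tfrac{1}{6}\HForm(\fx_1,\fx_3,\fx_2)$ and cancel, $\HForm$ being a section of $\wedge^3\Fs$. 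In the $\GG$-part, the terms $\tfrac{2}{3}\big(\ipG{\lbG{\gr_1}{\gr_2}}{\gr_3}+\ipG{\gr_2}{\lbG{\gr_1}{\gr_3}}\big)$ vanish by $\ad$-invariance of $\ipG{\qm}{\qm}$, while $\ipG{\ConnectionDer_{\fx_1}\gr_2}{\gr_3}+\ipG{\gr_2}{\ConnectionDer_{\fx_1}\gr_3}=\fx_1\ipG{\gr_2}{\gr_3}$ because $\ConnectionDer$ is metric for $\ipG{\qm}{\qm}$. This last identity I obtain from axiom~\eqref{algian:CAaxioms6} applied to $\dissection(\fx_1),\dissection(\gr_2),\dissection(\gr_3)$: by~\eqref{algian:CAaxioms2} the bracket $\db{\dissection(\fx_1)}{\dissection(\gr_2)}$ has vanishing $F$-component, so its pairing with $\dissection(\gr_3)$ equals $\ipG{\ConnectionDer_{\fx_1}\gr_2}{\gr_3}$ (this is~\eqref{Prop:fatofEquadratic} in disguise). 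Adding the three groups yields $\fx_1\ip{e_2}{e_3}=\rho(e_1)\ip{e_2}{e_3}$.

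For (c) only the $F$-components survive under $\rho$, so $\rho(\Econnection_{e_1}e_2-\Econnection_{e_2}e_1)=\Fconnection_{\fx_1}\fx_2-\Fconnection_{\fx_2}\fx_1=[\fx_1,\fx_2]=[\rho(e_1),\rho(e_2)]$, the middle equality being torsion-freeness of $\Fconnection$ on the integrable distribution $F$. I do not anticipate a real obstacle: once the dissection turns $\rho$ into the projection onto $F$, the three statements become bookkeeping. The only ingredient that is not purely formal is the metric compatibility of the induced $\ConnectionDer$ on $\GG$, which must be traced back to a Courant axiom as above; and one must watch the numerical coefficients carefully to see that the $\HForm$- and $\lbG{\qm}{\qm}$-contributions in (b) cancel.
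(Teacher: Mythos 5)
Your verification is correct and is precisely the direct computation the paper has in mind: the authors omit the proof as ``straightforward,'' and your term-by-term check in the dissected model $\Fs\oplus\GG\oplus F$ (anchor becoming projection onto $F$, cancellation of the $\HForm$- and $\lbG{\qm}{\qm}$-contributions by skew-symmetry and ad-invariance, and metric compatibility of $\ConnectionDer$ traced back to axiom~\eqref{algian:CAaxioms6}) supplies it accurately. No gaps.
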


We omit the proof as it is straightforward.

\subsection{Naive cohomology}
\label{naive_coho}

Let $E$ be a Courant algebroid.
The nondegenerate pseudo-metric on $E$ induces
a nondegenerate pseudo-metric on $\wedge^k E$:
\[ \ip{e_1\wedge\cdots\wedge e_k}{f_1\wedge\cdots\wedge f_k}
= \left| \begin{array}{cccc}
\ip{e_1}{f_1} & \ip{e_1}{f_2} & \cdots & \ip{e_1}{f_k} \\
\ip{e_2}{f_1} & \ip{e_2}{f_2} & \cdots & \ip{e_2}{f_k} \\
\vdots & \vdots & \ddots & \vdots \\
\ip{e_k}{f_1} & \ip{e_k}{f_2} & \cdots & \ip{e_k}{f_k}
\end{array} \right| .\]
The isomorphism of vector bundles $\wedge^k E\to\wedge^k E^*$
coming from this nondegenerate pseudo-metric will be denoted
by the same symbol $\Xi$. In the sequel, $\wedge^k E$ and
$\wedge^k E^*$ are sometimes identified with each other
when it is clear from the context.

The sections of $\wedge^k \ker\rho$ are called naive $k$-cochains.
The operator
\[ \dsmile:\Gamma (\wedge^k\ker\rho)\to \Gamma (\wedge^{k+1}\ker\rho ) \]
defined by the relation
\begin{multline}\label{m2}
\ip{\dsmile s}{e_0\wedge e_1\wedge\cdots\wedge e_k}
=\sum_{i=0}^k (-1)^i \rho(e_i) \ip{s}{e_0\wedge\cdots\wedge\widehat{e_i}\wedge\cdots\wedge e_k} \\
+ \sum_{i<j} (-1)^{i+j} \ip{s}{\cb{e_i}{e_j}\wedge e_0\wedge \cdots\wedge \widehat{e_i}\wedge
\cdots\wedge \widehat{e_j}\wedge \cdots,e_k} ,
\end{multline}
where $s\in\sections{\wedge^k\ker\rho}$ and $e_0,\dots,e_k\in\sections{E}$,
makes $(\sections{\wedge^{\bullet}\ker\rho},\dsmile)$ a cochain complex.
Its cohomology $H^\bullet_{\mathrm{naive}}(E)$ is called the
\textbf{naive cohomology} of the Courant algebroid $E$~\cite{MathieuXu}.

\begin{rmk}\label{Rmk:NaiveIdFormsOnFat}
If $E$ is regular, then $\ample_{E}=E/(\ker\rho)\ortho$ is a regular
Lie algebroid. And it is easy to see that the cochain complexes
$(\sections{\wedge^{\bullet}\ker\rho},\dsmile)$ and
$(\sections{\wedge^{\bullet}\ample_E^*},\dee)$ are isomorphic.
Indeed, we have
\[ \Xi(\ker\rho)=\big((\ker\rho)\ortho\big)^0=q^*(\ample_E^*) \]
and \[ (\Xi\inv\rond q^*)\rond \dee = \dsmile\rond (\Xi\inv\rond
q^*) .\] Hence the naive cohomology of $E$ is isomorphic to the Lie
algebroid cohomology of $\ample_{E}$.
\end{rmk}

\subsection{A degree-$3$ characteristic class}
\label{Sec:main1-3class}

The map $K:\sections{\wedge^3 E}\to\RR$ defined by
\[ K(e_1,e_2,e_3) = \ip{\cb{e_1}{e_2}}{e_3}+\CP ,\]
where $e_1,e_2,e_3\in\sections{E}$, is \emph{not} a $3$-form on $E$
as it is not $\cinf{M}$-linear.

However, it can be modified using an $E$-connection so that the result is $\cinf{M}$-linear.

\begin{lem}[\cite{AlekseevXu}]
 If $\Econnection$ is a covariant derivative on a Courant algebroid $E$, then
\begin{equation}\label{AlexXu3form}
C_{\Econnection}(e_1,e_2,e_3)=\tfrac{1}{3}\ip{\cb{e_1}{e_2}}{e_3}-\thalf
\ip{\Econnection_{e_1}e_2-\Econnection_{e_2}e_1}{e_3}+\CP ,
\end{equation}
where $e_1,e_2, e_3\in \sections{E}$, defines a $3$-form on $E$.
\end{lem}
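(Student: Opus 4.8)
The plan is to verify directly that $C_{\Econnection}$ is tensorial in each argument, i.e. $\cinf{M}$-linear, since it is manifestly $\RR$-multilinear and skew-symmetric (the cyclic sum over a $3$-argument alternating-looking expression, together with the skew-symmetry of the Courant bracket in the first two slots, makes the total expression alternating — this part I would note but not belabour). By the cyclic symmetrization $+\CP$ and the symmetry of the construction, it suffices to check $\cinf{M}$-linearity in a single argument, say the third: I would compute $C_{\Econnection}(e_1,e_2,fe_3) - f\,C_{\Econnection}(e_1,e_2,e_3)$ and show it vanishes for all $f\in\cinf{M}$ and $e_1,e_2,e_3\in\sections{E}$.

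First I would expand the three cyclic terms of $C_{\Econnection}(e_1,e_2,fe_3)$ using the known Leibniz-type identities: for the bracket term, $\cb{fe_3}{e_1} = f\cb{e_3}{e_1} - (\rho(e_1)f)e_3 + \ip{e_1}{e_3}\DD f$ (the skew-symmetric part of axiom~\eqref{algian:CAaxioms3} together with~\eqref{algian:CAaxioms4}), and similarly $\cb{e_2}{fe_3}=f\cb{e_2}{e_3}+(\rho(e_2)f)e_3-\ip{e_2}{e_3}\DD f$; for the connection term, $\Econnection_{e_1}(fe_3) = f\Econnection_{e_1}e_3 + (\rho(e_1)f)e_3$ while $\Econnection_{fe_3}e_1 = f\Econnection_{e_3}e_1$, by the defining axioms of an $E$-connection. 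Collecting the "extra" non-$f$-proportional contributions, the terms involving $(\rho(e_i)f)e_3$ paired against the remaining argument produce, from the bracket part, coefficient $\tfrac13$ and, from the connection part, coefficient $-\tfrac12\cdot(\pm1)$; these do not individually cancel, so the genuine point is that the $\DD f$ contributions $\ip{e_i}{e_3}\ip{\DD f}{\,\cdot\,}$ must combine with them. Here I would use $\ip{\DD f}{e} = \tfrac12\rho(e)f$ from~\eqref{coucou} to turn every $\DD f$-pairing into a $\tfrac12\rho(\cdot)f$ term, and then check that the resulting scalar combination of terms of the shape $(\rho(e_i)f)\ip{e_j}{e_k}$ is identically zero after using skew-symmetry and the cyclic sum.

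The main obstacle — and the heart of the computation — is precisely this bookkeeping of the first-order terms in $f$: one must track the signs coming from the cyclic permutation $\CP$, the sign flips in $\cb{\cdot}{\cdot}$ when moving $fe_3$ between slots, and the factor $\tfrac12$ relating $\DD f$ to $\rho(\cdot)f$, and confirm that the coefficient $\tfrac13$ of the Courant-bracket term versus the coefficient $-\tfrac12$ of the connection term conspire (together with the extra $\tfrac12$ from $\DD$) so that everything cancels. Concretely I expect each "bad" term $(\rho(e_1)f)\ip{e_2}{e_3}$ to appear with total coefficient $\tfrac13 + \tfrac13\cdot\tfrac12 \cdot(\text{from }\DD f) - \tfrac12 + \dots$, and the claim is that the algebra of these rational coefficients, summed cyclically, gives $0$; this is the one place where the specific constants $\tfrac13$ and $\tfrac12$ in~\eqref{AlexXu3form} are forced. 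Once tensoriality in one argument is established, the alternating property plus $\RR$-trilinearity give that $C_{\Econnection}\in\sections{\wedge^3 E^*}$, completing the proof; I would remark that the difference $C_{\Econnection} - C_{\Econnection'}$ for two connections is $-\tfrac12\ip{(\Econnection-\Econnection')_{e_1}e_2 - (\Econnection-\Econnection')_{e_2}e_1}{e_3}+\CP$, which is visibly tensorial, giving an independent consistency check but not needed for the statement itself.
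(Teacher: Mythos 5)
Your proposal is correct and carries out exactly the computation that the paper leaves to the cited Alekseev--Xu manuscript: the only issue is tensoriality, and your bookkeeping does close up --- for each ``bad'' term $(\rho(e_i)f)\ip{e_j}{e_3}$ the bracket part contributes $\tfrac13+\tfrac16=\tfrac12$ (the $\tfrac16$ coming from $\ip{\DD f}{e}=\tfrac12\rho(e)f$ applied to the $\ip{e_i}{e_3}\DD f$ correction), which cancels the $-\tfrac12$ from the connection part. Since the cyclic sum of a summand skew in its first two arguments is totally antisymmetric, $\cinf{M}$-linearity in one slot suffices, and $C_{\Econnection}$ is indeed a $3$-form.
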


%

The following is an analogue of the Chern-Weil construction.

\begin{thm}\label{Thm:main1}
Let $E$ be a regular Courant algebroid.
\begin{enumerate}
\item If $\Econnection$ is the $E$-connection on $E$ given by~\eqref{Eqn:Econnectionconcstructed},
then the $3$-form $C_{\Econnection}$ does not depend on the chosen torsion-free connection $\nabla^F$.
It only depends on the chosen dissection $\dissection$.
It will henceforth be denoted by the symbol $C_{\dissection}$.
\item The $3$-form $C_{\dissection}$ is a naive $3$-cocycle.
\item The cohomology class of $C_{\dissection}$ does not depend on the chosen dissection $\dissection$.
\end{enumerate}
\end{thm}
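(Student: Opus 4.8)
The three assertions are naturally proved in order, and the main work lies in (a) and (c); (b) is the lightest once the dissection-intrinsic description is in hand. I would fix a dissection $\dissection$ and compute $C_{\Econnection}$ for the $E$-connection of~\eqref{Eqn:Econnectionconcstructed} explicitly on triples of sections of the form $\xi+\gr+\fx$, using the formula for the Dorfman/Courant bracket of $F^*\oplus\GG\oplus F$ implicit in Proposition~\ref{Prop:CourantbracketIntermsofData} and the data $\ConnectionDer,\Curvature,\HForm$. The two ingredients of $C_{\Econnection}$ are $\tfrac13\ip{\cb{e_1}{e_2}}{e_3}$ and $-\thalf\ip{\Econnection_{e_1}e_2-\Econnection_{e_2}e_1}{e_3}$; in the second term every occurrence of $\Fconnection$ enters only through the \emph{torsion}-type combination $\Fconnection_\fx\fy-\Fconnection_\fy\fx-\lb{\fx}{\fy}$ and $\Fconnection_\fx\eta-\Fconnection_\fy(\dots)$ paired against $F$, so the torsion-freeness of $\nabla^F$ makes all $\nabla^F$-dependence cancel. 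This is the routine but slightly tedious computation behind assertion (a): after the dust settles, $C_{\dissection}$ is a universal expression in $\HForm$, $\Curvature$, $\ConnectionDer$ and $\lbG{}{}$ alone, e.g.\ schematically $C_{\dissection}=-\tfrac16\HForm + \text{(pairing of $\Curvature$ with $F$)} + \tfrac13\ipG{\Curvature}{\text{—}} + \text{(Cartan term }\ipG{\lbG{\gr}{\gs}}{\gt}\text{)}$, with no $\nabla^F$ left.

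**Assertion (b).** I would check directly that $\dsmile C_{\dissection}=0$ using the naive differential~\eqref{m2}. The cleanest route is to observe that $C_{\Econnection}$, for \emph{any} metric $E$-connection whose torsion controls the anchor (items (b)--(c) of the covariant-derivative lemma), is built so that $\dsmile C_{\Econnection}$ measures a Bianchi-type identity; alternatively, invoke Remark~\ref{Rmk:NaiveIdFormsOnFat} to transport the question to $(\sections{\wedge^\bullet\ample_E^*},\dee)$ and verify $\dee$-closedness there, where the Jacobi identity~\eqref{algian:CAaxioms1} for the Dorfman bracket plus the compatibility~\eqref{Prop:fatofEquadratic} of the $\ample_E$-action with $\ipG{}{}$ do the job. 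Concretely, $C_{\dissection}$ descends to a $3$-form on $\ample_E$ because it only pairs the $\GG\oplus F$ parts (it kills $F^*=(\ker\rho)\ortho$), so $\dsmile C_{\dissection}$ becomes the Lie-algebroid differential of that form, and closedness is the standard computation that the Cartan $3$-form is closed, twisted by the curvature $\Curvature$ and the form $\HForm$ — which assemble into the first Pontryagin data and is closed precisely because $\nabla^F$ has curvature satisfying Bianchi.

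**Assertion (c): independence of the dissection.** This is the real point. Any two dissections differ by an automorphism of $F^*\oplus\GG\oplus F$ preserving the pseudo-metric and compatible with the structure; such automorphisms are generated by (i) changes of the isotropic splitting $\splitting:F\to E$, parametrized by a $\GG$-valued $1$-form $\beta\in\sections{\Fs\otimes\GG}$ together with an $F^*$-valued piece $B\in\sections{\wedge^2\Fs}$, and (ii) gauge transformations of $\GG$ over $F$. I would show that under an infinitesimal (or one-parameter) change of dissection the data transform by the familiar gauge rules — $\ConnectionDer\mapsto\ConnectionDer+[\beta,\cdot]$, $\Curvature\mapsto\Curvature+\dA\beta+\tfrac12\lbG{\beta}{\beta}$, and $\HForm$ shifts by $\dM B$ plus Chern–Simons-type terms in $\beta$ and $\Curvature$ — and then that the induced change in $C_{\dissection}$ is exactly $\dsmile$ of an explicit naive $2$-cochain (the Chern–Simons $2$-form of the path of connections/dissections, i.e.\ $\transgression(\beta)=\ipG{\beta}{\Curvature}-\tfrac16\ipG{\beta}{\lbG{\beta}{\beta}}+\dots$, suitably interpreted as a section of $\wedge^2\ker\rho$). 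The mechanism is standard Chern–Weil transgression, but the bookkeeping is delicate because three separate pieces of data shift simultaneously and the pairing mixes them; the main obstacle is verifying that all the cross-terms organize into a single exact naive cochain rather than merely a closed one. I expect to carry this out by writing $\dissection_t$ for a smooth path joining the two dissections, differentiating $C_{\dissection_t}$ in $t$, and recognizing $\tfrac{d}{dt}C_{\dissection_t}=\dsmile\big(\tfrac{d}{dt}(\text{transgression cochain})\big)$; integrating in $t$ then gives $C_{\dissection_1}-C_{\dissection_0}=\dsmile(\text{naive }2\text{-cochain})$, which is what assertion (c) asserts. Since any two dissections are joined by such a path (the space of dissections is affine over the relevant bundle of forms, by Lemma~\ref{Lem:existenceofsplitting}), this completes the proof.
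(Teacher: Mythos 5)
Your treatment of (a) and (b) follows essentially the paper's route: one computes $C_{\Econnection}$ explicitly in the dissection and finds (Eq.~\eqref{Eqn:CstandardonE}) that it coincides with the standard $3$-form $C^s$ of Eq.~\eqref{labelrouge}, with all $\nabla^F$-dependence cancelling; note the actual coefficients are $+\HForm$ and $-\ipG{\lbG{\gr}{\gs}}{\gt}$, not the $-\tfrac16\HForm$ you guessed. Closedness is then Proposition~\ref{Lem:Csclosediff} combined with the identities \eqref{ipGinvariant}--\eqref{dMHPontryagin}. One correction for (b): the closedness does not come from the Bianchi identity of $\nabla^F$ (which has already disappeared) but from $\dF\HForm=\PontryaginCocycleR$, where $\Curvature$ is the $\GG$-valued curvature attached to the dissection; that identity is itself a consequence of the Jacobi identity of the Dorfman bracket (Proposition~\ref{Prop:CourantbracketIntermsofData3}).

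The genuine gap is in (c). Two dissections of $E$ differ by an automorphism of $\fsgf$ of the form \eqref{align:ExpressionofI}, which involves not only the ``affine'' data $\varphi:F\to\GG$ and $\beta:F\to\Fs$ but also an automorphism $\tau$ of the bundle of quadratic Lie algebras $\GG$. The space of dissections is therefore \emph{not} affine, and $\tau$ need not lie in the identity component of the automorphism group of $\GG$ (think of a constant outer automorphism of a trivial bundle $M\times\mfg$), so your claim that ``any two dissections are joined by such a path'' is unjustified and in general false; your transgression argument would only prove independence within a connected component of the space of dissections. The paper avoids this entirely by a direct algebraic computation: Proposition~\ref{GeneralIsomorphism} pins down the general isomorphism between the two transported standard structures, and Proposition~\ref{Lem:IPullBackDiffCoboundary} exhibits the difference $\iso^* C^s_2-C^s_1$ as the explicit coboundary $\dee(\thalf\Psi_{\beta}+\Phi_{\tau\inv\varphi})$, valid for arbitrary $\tau$. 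To repair your argument you would either have to establish connectivity (e.g.\ restrict to the dissections $\dissection_{\splitting}$ of Lemma~\ref{Lem:existenceofsplitting}, for which $\tau=\id$, and then treat the residual $\Aut(\GG)$ freedom separately) or carry out the finite computation directly --- at which point the path and the $t$-integration buy you nothing over the paper's one-line coboundary formula.
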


The class of $C_{\dissection}$ is called the \textbf{characteristic class} of the Courant algebroid $E$.
The proof of this theorem is deferred to Section~\ref{Sec:proofs}.

\subsection{First Pontryagin class of a quadratic Lie algebroid}
\label{FPC}

We fix a regular Lie algebroid $(\Abd,\algebroidlb{\qm}{\qm},\anchor)$
over the base manifold $M$ and with cohomology differential operator $\dee:\sections{\wedge^{\bullet}\Abds}\to\sections{\wedge^{\bullet+1}\Abds}$.

We have the short exact sequence of vector bundles
\[ 0\to \GG\to \Abd\xto{\anchor} F\to 0 ,\]
where $F=\anchor(\Abd)$ and $\GG=\ker\anchor$.

It is clear that $\GG$ is a bundle of Lie algebras, called the Lie algebra bundle of $\Abd$.
The fiberwise bracket is denoted by $\lbG{\qm}{\qm}$.

\begin{defn}
\label{def:quad}
\begin{enumerate}
\item A regular Lie algebroid $\Abd$ is said to be a \textbf{quadratic} Lie algebroid if the kernel $\GG$ of its anchor $\anchor$ is equipped with a fiberwise nondegenerate $ad$-invariant symmetric bilinear form
$\ipG{\qm}{\qm}$ satisfying:
\[ \anchor{(X)}\ipG{\gr}{\gs}=\ipG{\algebroidlb{X}{\gr}}{\gs}+\ipG{\gr}{\algebroidlb{X}{\gs}},
\qquad \forall X\in \sections{\Abd}, \gr,\gs\in\sections{\GG} .\]
\item Two regular quadratic Lie algebroids $\Abd_1$ and $\Abd_2$ are said to be isomorphic if there is a Lie algebroid isomorphism $\Abd_1\to\Abd_2$, whose restriction $\GG_1\to\GG_2$ to the kernels of the anchors is an isomorphism of quadratic Lie algebras.
\end{enumerate}
\end{defn}

For example, the ample Lie algebroid $\ample_{E}=E/(\ker\rho)\ortho$ of
any regular Courant algebroid $(E,\ip{\qm}{\qm},\db{}{},\rho)$ is a quadratic Lie algebroid.

Now a natural question arises as to whether every quadratic Lie algebroid
can be realized as the ample Lie algebroid of a Courant algebroid.
It turns out that there is an obstruction. To see this, let us introduce some notations.

A \textbf{hoist} of the Lie algebroid $\Abd$ is a section $\hoist:F\to\Abd$ of the anchor $\anchor$.
The bundle map $\hoistcurvature:\wedge^2 F\to\GG$ given by
\begin{equation}\label{Eqt:Curvature}
\hoistcurvature(x,y)=\algebroidlb{\hoist(x)}{\hoist(y)}-\hoist([x,y]), \qquad\forall
x,y\in\sections{F}
\end{equation}
is its \textbf{curvature}.

Given a quadratic Lie algebroid endowed with a hoist,
we can consider the $4$-form
$\PontryaginCocycle\in\sections{\wedge^4 F^*}$ given by
\[ \PontryaginCocycle(x_1,x_2,x_3,x_4)=
\tfrac{1}{4}\sum_{\sigma\in S_4} \sgn(\sigma)
\ipg{\hoistcurvature(x_{\sigma(1)},x_{\sigma(2)})}
{\hoistcurvature(x_{\sigma(3)},x_{\sigma(4)})} ,\]
for all $x_1,x_2,x_3,x_4\in\sections{F}$.

\begin{lem}
The $4$-form $\PontryaginCocycle$ is closed and its cohomology class
in $H^4(F)$ does not depend on the choice of $\hoist$.
\end{lem}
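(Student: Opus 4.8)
The plan is to show that $\PontryaginCocycle$ is the leafwise de Rham representative of the first Pontryagin class of a vector bundle-with-connection, so that closedness and independence of the hoist follow from the standard Chern--Weil argument applied leafwise. The key observation is that a hoist $\hoist:F\to\Abd$ determines an $F$-connection on the Lie algebra bundle $\GG$: since $\GG$ is a module over $\Abd$ via $\algebroidlb{\qm}{\qm}$, we can set $\ConnectionDer_x\gr := \algebroidlb{\hoist(x)}{\gr}$ for $x\in\sections F$, $\gr\in\sections\GG$. One checks from the Leibniz rule of the Lie algebroid bracket that this is an honest $F$-connection, and its curvature is $\Curvature^{\ConnectionDer}(x,y)\gr = \algebroidlb{\hoistcurvature(x,y)}{\gr} = \ad_{\hoistcurvature(x,y)}\gr$; this is exactly equation~\eqref{Eqt:Curvature} fed into the Jacobi identity. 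Because $\ipG{\qm}{\qm}$ is fiberwise $\ad$-invariant and $\ConnectionDer$-invariant (the latter is precisely the compatibility condition in Definition~\ref{def:quad}, read with $X=\hoist(x)$), the pairing $\ipg{\qm}{\qm}$ is a parallel section of $\GG^*\otimes\GG^*$ for the induced connection.

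Next I would assemble $\PontryaginCocycle$ as a Chern--Weil form. Consider the $\GG$-valued $2$-form $\hoistcurvature\in\sections{\wedge^2 F^*\otimes\GG}$; the ad-invariant pairing lets us form $\ipg{\hoistcurvature\wedge\hoistcurvature}\in\sections{\wedge^4 F^*}$, and unwinding the wedge convention gives exactly the displayed antisymmetrization $\PontryaginCocycle$. Now apply the leafwise Bianchi identity: the $F$-connection $\ConnectionDer$ on $\GG$ has $\dee^{\ConnectionDer}\hoistcurvature^{\ConnectionDer}=0$, where I write $\hoistcurvature^{\ConnectionDer}$ for the $\End(\GG)$-valued curvature $\ad\circ\hoistcurvature$. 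Since $\ipg{\qm}{\qm}$ is $\ConnectionDer$-parallel, the leafwise differential passes through the pairing:
\[
\dF\,\ipg{\hoistcurvature\wedge\hoistcurvature}
= 2\,\ipg{(\dee^{\ConnectionDer}\hoistcurvature)\wedge\hoistcurvature}
= 0 ,
\]
which gives closedness. For independence of the hoist: two hoists $\hoist_0,\hoist_1$ differ by a section $\beta = \hoist_1-\hoist_0\in\sections{F^*\otimes\GG}$, hence give connections $\ConnectionDer^0,\ConnectionDer^1$ on $\GG$ differing by $\ad\circ\beta$. Running the usual transgression formula for the difference of two Chern--Weil forms of a metric-compatible connection on a bundle with ad-invariant pairing produces a leafwise $3$-form $\tau(\beta)$ with $\dF\tau(\beta) = \ipg{\hoistcurvature^{1}\wedge\hoistcurvature^{1}} - \ipg{\hoistcurvature^{0}\wedge\hoistcurvature^{0}}$; explicitly one interpolates $\hoist_t = \hoist_0 + t\beta$ and integrates $\tfrac{d}{dt}\ipg{\hoistcurvature^t\wedge\hoistcurvature^t} = 2\,\dF\ipg{(\dee^{\ConnectionDer^t}\beta)\wedge\hoistcurvature^t}$ over $t\in[0,1]$.

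The main obstacle is bookkeeping rather than conceptual: one must be careful that all the Chern--Weil machinery, which is usually stated for ordinary connections on vector bundles over a manifold, works verbatim for $F$-connections where $F$ is only an integrable subbundle of $TM$ (equivalently, for the leafwise de Rham complex of the foliation integrating $F$). This is fine because $\sections{\wedge^\bullet F^*}$ with $\dF$ is a genuine differential graded algebra and $\ConnectionDer$ is a genuine flat-free connection relative to it; every identity used (Leibniz for $\dee^{\ConnectionDer}$, Bianchi, parallelism of the metric) is a pointwise/algebraic consequence of the axioms and does not see the difference between $F$ and $TM$. A secondary point to verify carefully is the combinatorial factor $\tfrac14\sum_{\sigma\in S_4}\sgn(\sigma)$ matching the convention in $\ipg{\hoistcurvature\wedge\hoistcurvature}$, and that $\hoistcurvature^{\ConnectionDer}=\ad\circ\hoistcurvature$ rather than merely being conjugate to it — but this is immediate from the Jacobi identity in $\GG$. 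With those checks in place the lemma follows.
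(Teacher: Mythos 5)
Your strategy --- leafwise Chern--Weil for the $F$-connection $\ConnectionDer_x\gr=\algebroidlb{\hoist(x)}{\gr}$ induced by the hoist, with closedness from a Bianchi identity and hoist-independence from the usual transgression --- is exactly the intended argument; the paper omits the proof of this lemma (it is the standard computation of \v{S}evera and Bressler), and the identities you need are recorded in the Courant setting as Eqs.~\eqref{ipGinvariant}--\eqref{dMHPontryagin}. However, one step is justified by the wrong tool. You invoke the Bianchi identity for the \emph{$\End(\GG)$-valued} curvature $\ad\circ\hoistcurvature$ of $\ConnectionDer$, but the computation $\dF\ipg{\hoistcurvature\wedge\hoistcurvature}=2\ipg{(\dee^{\ConnectionDer}\hoistcurvature)\wedge\hoistcurvature}$ requires $\dee^{\ConnectionDer}\hoistcurvature=0$ for the \emph{$\GG$-valued} $2$-form $\hoistcurvature$ itself. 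The $\End(\GG)$-valued Bianchi identity only yields $\ad\circ(\dee^{\ConnectionDer}\hoistcurvature)=0$, i.e.\ that $\dee^{\ConnectionDer}\hoistcurvature$ takes values in the center $Z(\GG)$, and $Z(\GG)$ need not vanish: in the contact example discussed later in the paper, $\GG$ is the trivial line bundle with abelian bracket, so $\ad\circ\hoistcurvature=0$ identically while $\hoistcurvature=d\theta\neq0$, and the $\End(\GG)$-valued statement carries no information at all.

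The gap is easily closed by a direct computation: writing
\[
(\dee^{\ConnectionDer}\hoistcurvature)(x,y,z)=\big(\ConnectionDer_x\hoistcurvature(y,z)-\hoistcurvature(\lb{x}{y},z)\big)+\CP ,
\]
substituting $\ConnectionDer_x=\algebroidlb{\hoist(x)}{\qm}$ and $\hoistcurvature(y,z)=\algebroidlb{\hoist(y)}{\hoist(z)}-\hoist(\lb{y}{z})$, the Jacobi identity in $\sections{\Abd}$ kills the triple-bracket terms and the Jacobi identity in $\sections{F}$ kills the remaining $\hoist(\lb{x}{\lb{y}{z}})$ terms, leaving $0$; this is precisely the content of Eq.~\eqref{dCurvautre0}. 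With this $\GG$-valued Bianchi identity in hand, both your closedness argument and your transgression go through (the pairing is indeed $\ConnectionDer^t$-parallel for every $t$, since $\hoist+t\beta$ is again a hoist). Finally, note a degree mismatch in your transgression display: it should read $\tfrac{d}{dt}\ipg{\hoistcurvature^t\wedge\hoistcurvature^t}=2\ipg{(\dee^{\ConnectionDer^t}\beta)\wedge\hoistcurvature^t}=2\,\dF\ipg{\beta\wedge\hoistcurvature^t}$, the last equality again using $\dee^{\ConnectionDer^t}\hoistcurvature^t=0$ together with parallelism of the pairing.
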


Following~\cite{Severa, MR2360313}, this cohomology class is called
the \textbf{first Pontryagin class} of the quadratic Lie algebroid $\Abd$.

\begin{thm}\label{Thm:Bressler'sThm}
A regular quadratic Lie algebroid is isomorphic
to the ample Lie algebroid associated to some regular Courant algebroid
if and only if its first Pontryagin class vanishes.
\end{thm}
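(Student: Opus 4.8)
The statement has two directions. The "only if" direction is essentially the content of the earlier discussion: given a regular Courant algebroid $E$, its ample Lie algebroid $\ample_E$ is quadratic, and one must check that the first Pontryagin class of $\ample_E$ vanishes. The plan here is to use a dissection $\dissection$ of $E$ (which exists by Lemma~\ref{Lem:existenceofsplitting}) together with the induced data $\ConnectionDer,\Curvature,\HForm$. A splitting $\splitting\colon F\to E$ with isotropic image projects to a hoist $\hoist\colon F\to\ample_E$ of the anchor of $\ample_E$, and one computes that the curvature $\hoistcurvature$ of this hoist is precisely the $\GG$-component $\Curvature$ of the Dorfman bracket $\db{\splitting(x)}{\splitting(y)}$. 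The key point is then the following identity, which should follow by unwinding the Courant algebroid axioms~\eqref{algian:CAaxioms1}--\eqref{algian:CAaxioms6} applied to sections of the form $\splitting(x)$: the $4$-form $\PontryaginCocycleR$ built from $\Curvature$ equals $\dee^F$ of a $3$-form constructed from $\HForm$ and $\Curvature$ (morally, $\langle\Curvature\wedge\Curvature\rangle = \dee^F(\text{Chern--Simons }3\text{-form})$ on the leaves of $F$). Hence $\PontryaginCocycleR$ is leafwise-exact, so the first Pontryagin class of $\ample_E$ is zero.

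The "if" direction is the substantive construction. Given a regular quadratic Lie algebroid $\ample$ with $\GG=\ker\anchor$, $F=\anchor(\ample)$, and vanishing first Pontryagin class, I must build a regular Courant algebroid $E$ whose ample Lie algebroid is (isomorphic to) $\ample$. The natural candidate is the vector bundle $E := \Fs\oplus\GG\oplus F$, equipped with the pseudo-metric $\ip{\xi+\gr+\fx}{\eta+\gs+\fy}=\thalf\duality{\xi}{\fy}+\thalf\duality{\eta}{\fx}+\ipG{\gr}{\gs}$ and anchor $\rho(\xi+\gr+\fx)=\fx\in F\subset TM$. The structure to be supplied is a Dorfman bracket, and the dissection formalism tells us it is determined by the data $(\ConnectionDer,\Curvature,\HForm)$: here we take $\ConnectionDer$ to be any $F$-connection on $\GG$ compatible with $\lbG{\qm}{\qm}$ and $\ipG{\qm}{\qm}$ (lifting the $\ample$-action of $F$ via a hoist), $\Curvature=\hoistcurvature$ the curvature of a chosen hoist $\hoist\colon F\to\ample$, and $\HForm\in\sections{\wedge^3\Fs}$ a $3$-form to be chosen. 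One then writes down the candidate Dorfman bracket in these coordinates and checks that the Courant algebroid axioms~\eqref{algian:CAaxioms1}--\eqref{algian:CAaxioms6} hold. All axioms except the Jacobi-type identity~\eqref{algian:CAaxioms1} are straightforward from $ad$-invariance and the compatibility of $\ConnectionDer$; the Jacobi identity produces exactly one obstruction, namely a $4$-form on $F$ that must vanish, and this $4$-form is (a multiple of) $\PontryaginCocycleR - \dee^F\HForm$. Since the class $\class{\PontryaginCocycleR}\in H^4(F)$ is zero by hypothesis, there exists $\HForm\in\sections{\wedge^3\Fs}$ with $\dee^F\HForm=\PontryaginCocycleR$; choosing such an $\HForm$ makes the obstruction vanish and yields a genuine Courant algebroid structure on $E$. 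Finally one checks $(\ker\rho)\ortho=\Fs$, $\ker\rho=\Fs\oplus\GG$, so $E/(\ker\rho)\ortho\cong\GG\oplus F$ with its induced Lie algebroid structure, and verifies that this is isomorphic to $\ample$ as a quadratic Lie algebroid (the bracket on $\GG\oplus F$ reconstructed from $\ConnectionDer$, $\lbG{\qm}{\qm}$, $\hoistcurvature$ matches that of $\ample$ via the hoist $\hoist$).

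The main obstacle is the Jacobi identity computation in the "if" direction: one must expand $\db{e_1}{(\db{e_2}{e_3})}$ for $e_i=\xi_i+\gr_i+\fx_i$ and organize the many terms by type ($F^*$-, $\GG$-, $F$-valued), showing that all of them cancel identically once $\dee^F\HForm=\PontryaginCocycleR$, using the Bianchi identity for $\hoistcurvature$, the curvature identity for $\ConnectionDer$, $ad$-invariance of $\ipG{\qm}{\qm}$, and the Jacobi identity in the fibers of $\GG$. Isolating precisely which combination of terms is the Pontryagin $4$-form — and confirming it is the unique obstruction — is the delicate bookkeeping step; this is also the computation underlying the $3$-form $C_{\dissection}$ of Theorem~\ref{Thm:main1} and the coherent-form condition mentioned in the introduction, so the two are closely linked. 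I would carry it out by first treating the "purely quadratic" case $F=0$ (recovering the classical fact that a bundle of quadratic Lie algebras is a Courant algebroid) and the "exact" case $\GG=0$ (recovering \v{S}evera's classification with $\HForm$ a closed leafwise $3$-form), then assembling the general case from these two extremes.
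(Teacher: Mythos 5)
Your proposal is correct and follows essentially the same route as the paper: the ``only if'' direction rests on the identity $\dF\HForm=\PontryaginCocycleR$ coming from a dissection (Eq.~\eqref{dMHPontryagin}), and the ``if'' direction builds the standard Courant algebroid on $\fsgf$ from the quintuple $(F,\GG;\ConnectionDer^{\Connection},\Curvature^{\Connection},\HForm)$ with $\dF\HForm=\PontryaginCocycle$, which is exactly the content of Theorem~\ref{Thm:StandardCourantStructure} that the paper invokes. The Jacobi-identity bookkeeping you defer (the only nontrivial obstruction being the Pontryagin $4$-form, the remaining identities holding automatically for a quadratic Lie algebroid) is precisely what is packaged in Proposition~\ref{Prop:CourantbracketIntermsofData3} and Theorem~\ref{Thm:StandardCourantStructure}.
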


The proof of this theorem is deferred to Section~\ref{Sec:standard3form}.

\subsection{Characteristic pairs and regular Courant algebroids}

\begin{defn}\label{Defn:good}
\begin{enumerate}
\item Let $\Abd$ be a regular quadratic Lie algebroid and let $\hoist$ be a hoist of $\Abd$.
A closed $3$-form $C\in\Gamma(\wedge^3\Abd^*)$ is said to be \textbf{$\hoist$-coherent} if
\begin{gather}
\label{Align:GoodformsCdt1} C(\gr,\gs,\gt)=-\ipG{\lbG{\gr}{\gs}}{\gt} ;\\
\label{Align:GoodformsCdt2} C(\gr,\gs,\Connection(x))=0 ;\\
\label{Align:GoodformsCdt3} C(\gr,\Connection(x),\Connection(y))=\ipG{\gr}{\Curvature^{\Connection}(x,y)},
\end{gather}
for all $\gr,\gs,\gt\in\GG$ and $x,y\in F$. We also say that
$(C,\hoist)$ is a coherent pair.
\item A closed $3$-form $C\in\Gamma(\wedge^3\Abd^*)$ on a regular quadratic Lie algebroid is called \textbf{coherent} if
there exists a hoist $\hoist$ such that $C$ is $\hoist$-coherent.
\item A \textbf{characteristic pair} is a couple $(\Abd,C)$ made of a regular
quadratic Lie algebroid $\Abd$ and a coherent $3$-form  $C$ on it.
\end{enumerate}
\end{defn}

\begin{ex}
If $\Abd=\GG$ is a quadratic Lie algebra, then the only \good\ $3$-form
is the Cartan $3$-form defined by Eq.~\eqref{Align:GoodformsCdt1}.
If $\Abd=\GG\times TM$ is the product Lie algebroid
with the obvious quadratic Lie algebroid structure
and $\HForm$ is a closed $3$-form on $M$,
then the $3$-form ${C}\in\sections{\wedge^3\Abd^*}$ defined by
\[ {C}(\gr+x,\gs+y, \gt+z) = -\ipG{\lbG{\gr}{\gs}}{\gt}+\HForm(x,y,z) ,\]
for all $\gr,\gs,\gt\in\sections{\GG}$ and $x,y,z\in\sections{TM}$,
is \good.
\end{ex}

\begin{prop}\label{Prop:PontryaginCoherentExtensionAllEquivalent}
Let $\Abd$ be a regular quadratic Lie algebroid.
 The following statements are equivalent.
\begin{enumerate}
\item[1)] The first Pontryagin class of $\Abd$ vanishes.
\item[2)] There exists a coherent $3$-form on $\Abd$.
\item[3)] There exists a Courant algebroid whose ample Lie algebroid is  $\Abd$.
\end{enumerate}\end{prop}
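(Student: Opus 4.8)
The plan is to invoke Theorem~\ref{Thm:Bressler'sThm}, which already gives the equivalence $(1)\Leftrightarrow(3)$, and to close the cycle by proving $(3)\Rightarrow(2)$ and $(2)\Rightarrow(1)$. Thus the only genuinely new work is to relate the existence of a coherent $3$-form both to the existence of a Courant algebroid over $\Abd$ and to the vanishing of the first Pontryagin class.

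For $(3)\Rightarrow(2)$ I would start from a regular Courant algebroid $E$ with $\ample_E\cong\Abd$ and choose a dissection $\dissection$ of $E$ (available by Lemma~\ref{Lem:existenceofsplitting}), with isotropic splitting $\splitting\colon F\to E$ and induced data $\ConnectionDer,\Curvature,\HForm$. Setting $\hoist:=q\circ\splitting\colon F\to\Abd$ gives a hoist of $\Abd$, because $\anchor\circ q=\rho$ and $\rho\circ\splitting=\id_F$, and unwinding the construction of the bracket on $\Abd$ identifies its curvature $\hoistcurvature$ with $\Curvature$. By Theorem~\ref{Thm:main1}(2) the $3$-form $C_{\dissection}$ built from the $E$-connection~\eqref{Eqn:Econnectionconcstructed} via~\eqref{AlexXu3form} is a naive $3$-cocycle, so by Remark~\ref{Rmk:NaiveIdFormsOnFat} it descends to a closed $\widehat C\in\sections{\wedge^3\Abd^*}$ with $C_{\dissection}(e_1,e_2,e_3)=\widehat C(qe_1,qe_2,qe_3)$. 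Substituting~\eqref{Eqn:Econnectionconcstructed} into~\eqref{AlexXu3form} and evaluating on sections lying in $\sigma_{\splitting}(\GG)$ and $\splitting(F)$, I expect the coefficients $\tfrac13,\tfrac23,\tfrac12$ to combine so that $\widehat C$ satisfies~\eqref{Align:GoodformsCdt1}--\eqref{Align:GoodformsCdt3} with respect to $\hoist$ --- for instance, on $\wedge^3\GG$ the contributions from $\tfrac13\ip{\cb{e_1}{e_2}}{e_3}$ and $-\tfrac12\ip{\Econnection_{e_1}e_2-\Econnection_{e_2}e_1}{e_3}$ add, after cyclic summation, to $-\ipG{\lbG{\gr}{\gs}}{\gt}$. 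Then $(\Abd,\widehat C)$ is a characteristic pair, which is (2). (Alternatively, one can define $\widehat C$ directly by~\eqref{Align:GoodformsCdt1}--\eqref{Align:GoodformsCdt3} together with $\widehat C(\hoist x,\hoist y,\hoist z):=\HForm(x,y,z)$ and verify $\dee\widehat C=0$ from the Courant algebroid axioms.)

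For $(2)\Rightarrow(1)$ I would take a $\hoist$-coherent $C$ and use the vector-bundle splitting $\Abd\cong\GG\oplus\hoist(F)$. Define $\HForm\in\sections{\wedge^3 F^*}$ by $\HForm(x,y,z):=C(\hoist x,\hoist y,\hoist z)$ and evaluate the closedness equation $\dee C=0$ of the Lie algebroid $\Abd$ on quadruples $\hoist(x_0),\dots,\hoist(x_3)$ of horizontal sections. Using $\anchor(\hoist x)=x$ and the decomposition $\algebroidlb{\hoist x_i}{\hoist x_j}=\hoist([x_i,x_j])+\hoistcurvature(x_i,x_j)$ into its $\hoist(F)$- and $\GG$-components, each term of the Cartan formula for $\dee$ splits into a purely horizontal piece and a piece in which $C$ takes exactly one $\GG$-argument; the horizontal pieces reassemble into $(\dF\HForm)(x_0,\dots,x_3)$, while by~\eqref{Align:GoodformsCdt3} the remaining pieces yield $\sum_{i<j}(-1)^{i+j}\ipG{\hoistcurvature(x_i,x_j)}{\hoistcurvature(x_k,x_l)}$, which a short sign count shows equals $-\PontryaginCocycle(x_0,\dots,x_3)$. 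Hence $\dF\HForm=\PontryaginCocycle$, so $\PontryaginCocycle$ is leafwise exact and the first Pontryagin class of $\Abd$ vanishes, i.e.\ (1).

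The main obstacle will be bookkeeping, concentrated in two places. In $(3)\Rightarrow(2)$ one must verify the three pointwise identities~\eqref{Align:GoodformsCdt1}--\eqref{Align:GoodformsCdt3} by expanding~\eqref{AlexXu3form} with~\eqref{Eqn:Econnectionconcstructed} on mixed arguments and tracking how the fractional coefficients cancel --- condition~\eqref{Align:GoodformsCdt2}, the vanishing on $\wedge^2\GG\otimes\hoist(F)$, being the least transparent. In $(2)\Rightarrow(1)$ the delicate point is the sign and normalisation count showing that the antisymmetrised $\ipG{\hoistcurvature}{\hoistcurvature}$-sum produced by $\dee C=0$ matches the $4$-form $\PontryaginCocycle$ with its $\tfrac14\sum_{\sigma\in S_4}\sgn(\sigma)$ normalisation exactly; even if it matched only up to a nonzero scalar, the conclusion that the class vanishes would still follow.
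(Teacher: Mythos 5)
Your proposal is correct and follows essentially the same route as the paper: the paper treats this proposition as an immediate consequence of Theorem~\ref{Thm:Bressler'sThm} (giving $1)\Leftrightarrow 3)$) together with Proposition~\ref{Lem:Csclosediff} and the standard $3$-form $C^s$ of Eq.~\eqref{labelrouge}, which is exactly your $C_{\dissection}$ (cf.\ Eq.~\eqref{Eqn:CstandardonE}) for $3)\Rightarrow 2)$ and whose closedness equation restricted to horizontal quadruples, Eq.~\eqref{align:1}, is your computation $\dF\HForm=\PontryaginCocycle$ for $2)\Rightarrow 1)$. The only difference is that you re-derive by hand identities the paper has already recorded.
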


Consider the subcomplex $\subcomplex^\bullet(\Abd)$ of $(\sections{\wedge^\bullet\Abd^*},\dee)$,
where $\subcomplex^k(\Abd)$ is made of the sections of $\wedge^k\Abd^*$
which are annihilated by all sections of $\wedge^k\GG$.
Its cohomology groups are denoted $H^{\bullet}_{\leftrightarrow}(\Abd)$.
Given two coherent $3$-forms $C_1$ and $C_2$ on $\Abd$, we have $C_2-C_1\in\subcomplex^3(\Abd)$.

\begin{defn}
Two characteristic pairs $(\Abd_1,C_1)$ and $(\Abd_2,C_2)$ are said to be \textbf{equivalent} if there exists an isomorphism
of quadratic Lie algebroids $\sigma:\Abd_1\to\Abd_2$ such that $[C_1-\sigma^* C_2]=0\in H^3_{\leftrightarrow}(\Abd_1)$.
The equivalence class of a characteristic pair $(\Abd,C)$ will be denoted $\class{(\Abd,C)}$.
\end{defn}

Our main result is the following:

\begin{thm}\label{Thm:onetoone2}
\begin{enumerate}
\item
There is a one-to-one correspondence between regular Courant algebroids
up to isomorphisms and equivalence classes of characteristic pairs.
\item
If $E$ is a Courant algebroid corresponding to the
characteristic pair $(\Abd, C)$, then
the characteristic class of $E$ is equal to $[C]\in H^3(\Abd )$.
\end{enumerate}
\end{thm}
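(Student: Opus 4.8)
The plan is to construct two mutually inverse maps: $\Phi$, sending an isomorphism class of regular Courant algebroids to an equivalence class of characteristic pairs, and $\Upsilon$, going back; the fact that $\Upsilon$ sends the $3$-form $C$ of a pair to the characteristic class of the reconstructed algebroid will then give part~(2) essentially for free. For $\Phi$: given a regular Courant algebroid $E$, choose a dissection $\dissection$ (which exists by Lemma~\ref{Lem:existenceofsplitting}), form the quadratic Lie algebroid $\Abd_E$, and use Remark~\ref{Rmk:NaiveIdFormsOnFat} to reinterpret the naive $3$-cocycle $C_{\dissection}$ of Theorem~\ref{Thm:main1} as a closed $3$-form $\widetilde{C}_{\dissection}\in\sections{\wedge^3\Abd_E^*}$. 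The first point to check is that $\widetilde{C}_{\dissection}$ is coherent for the hoist $\hoist=q\rond\splitting$: substituting sections of $\GG$ and of $\hoist(F)$ into formula~\eqref{AlexXu3form}, using the explicit $E$-connection~\eqref{Eqn:Econnectionconcstructed} and the facts that $\hoist(F)$ is isotropic and orthogonal to $\GG$, one recovers exactly~\eqref{Align:GoodformsCdt1}--\eqref{Align:GoodformsCdt3} (with $\Curvature$ the curvature $\hoistcurvature$ of $\hoist$) --- e.g. on $\GG$ the cyclic sum in~\eqref{AlexXu3form} collapses by $\ad$-invariance of $\ipG{\qm}{\qm}$ to $-\ipG{\lbG{\gr}{\gs}}{\gt}$. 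So $(\Abd_E,\widetilde{C}_{\dissection})$ is a characteristic pair; Theorem~\ref{Thm:main1}(1)--(3) already gives independence of $\nabla^F$ and of $\dissection$ at the level of $H^3_{\mathrm{naive}}(E)\cong H^3(\Abd_E)$, and one refines the proof of Theorem~\ref{Thm:main1}(3) to see that two dissections produce coherent forms differing by $\dee\beta$ with $\beta\in\subcomplex^2(\Abd_E)$, so the class of the difference vanishes already in $H^3_{\leftrightarrow}(\Abd_E)$.

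For $\Upsilon$: given a characteristic pair $(\Abd,C)$, pick a hoist $\hoist$ making $C$ $\hoist$-coherent; then by Proposition~\ref{Prop:PontryaginCoherentExtensionAllEquivalent} (equivalently Theorem~\ref{Thm:Bressler'sThm}) the first Pontryagin class of $\Abd$ vanishes. With $F=\anchor(\Abd)$, $\GG=\ker\anchor$, set $\ConnectionDer_x\gr:=\algebroidlb{\hoist(x)}{\gr}$, $\Curvature:=\hoistcurvature$ and $\HForm(x,y,z):=C(\hoist(x),\hoist(y),\hoist(z))$; $\ad$-invariance makes $\ConnectionDer$ a metric connection on $\GG$, and $\Curvature\colon\wedge^2F\to\GG$, $\HForm\in\sections{\wedge^3\Fs}$. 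Build $E:=\Fs\oplus\GG\oplus F$ with the pseudo-metric of a dissection and the Dorfman bracket assembled from $(\ConnectionDer,\Curvature,\HForm)$ and the quadratic Lie algebra bundle $\GG$, as in Proposition~\ref{Prop:CourantbracketIntermsofData}. The substantial task is to verify the Courant axioms~\eqref{algian:CAaxioms1}--\eqref{algian:CAaxioms6}: axioms~\eqref{algian:CAaxioms2}--\eqref{algian:CAaxioms6} and the $\GG$-valued components of the Jacobi identity~\eqref{algian:CAaxioms1} reduce to the Lie algebroid axioms of $\Abd$, $\ad$-invariance, and the coherence relations~\eqref{Align:GoodformsCdt1}--\eqref{Align:GoodformsCdt3}, while the $\Fs$-valued component of~\eqref{algian:CAaxioms1} is precisely the identity $\dF\HForm=\PontryaginCocycle$ (up to a constant) obtained by evaluating $\dee C=0$ on four sections of $\hoist(F)$ --- this is where closedness of $C$, hence vanishing of the Pontryagin class, is used --- with the Bianchi identity for $\hoistcurvature$ handling the remaining mixed components. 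One then checks that the ample Lie algebroid of this $E$ is isomorphic as a quadratic Lie algebroid to $\Abd$: indeed $\ker\rho=\Fs\oplus\GG$ and $(\ker\rho)\ortho=\Fs$, so $\Abd_E\cong\GG\oplus F$, with bracket and metric read off through $\hoist$.

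Now the two round trips. Applying $\Phi$ to $E=\Upsilon(\Abd,C)$ via the tautological dissection of $\Fs\oplus\GG\oplus F$, formulas~\eqref{AlexXu3form} and~\eqref{Eqn:Econnectionconcstructed} give $\widetilde{C}_{\dissection}=C$ on each of the four types of arguments $(\gr,\gs,\gt)$, $(\gr,\gs,\hoist x)$, $(\gr,\hoist x,\hoist y)$, $(\hoist x,\hoist y,\hoist z)$, using the coherence of $C$; hence $\Phi\rond\Upsilon=\id$, and since the characteristic class of $E$ is by definition $[\widetilde{C}_{\dissection}]$, this is exactly part~(2). For $\Upsilon\rond\Phi=\id$, a dissection identifies $E$ with $\Fs\oplus\GG\oplus F$ carrying the bracket reassembled from its own induced data $(\ConnectionDer,\Curvature,\HForm)$, which is precisely $\Upsilon$ applied to $\Phi(E)$. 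Finally, to see that these bijections descend to the quotient sets: a Courant algebroid isomorphism induces a quadratic Lie algebroid isomorphism of ample algebroids intertwining the characteristic forms modulo $\subcomplex$; conversely, given an equivalence $\sigma\colon\Abd_1\to\Abd_2$ with $C_1-\sigma^*C_2=\dee\beta$, $\beta\in\subcomplex^2(\Abd_1)$, one uses $\beta$ to correct the evident bundle isomorphism $\Fs\oplus\GG_1\oplus F\to\Fs\oplus\GG_2\oplus F$ into one compatible with the Dorfman brackets.

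The main obstacle is this last point: showing that the ambiguity in the constructions --- choice of dissection for $\Phi$, choice of hoist and of representative $C$ for $\Upsilon$ --- is measured \emph{exactly} by $H^3_{\leftrightarrow}(\Abd)$, so that $\Phi$ and $\Upsilon$ are inverse bijections on equivalence classes and not merely on underlying sets. This needs a careful analysis of how dissections (equivalently, pairs of splittings as in Lemma~\ref{Lem:existenceofsplitting}) transform under the gauge ambiguity, and is precisely the reason the subcomplex $\subcomplex^\bullet$ was singled out. By comparison, the verification of the Courant axioms for $\Upsilon(\Abd,C)$, including the Jacobi identity~\eqref{algian:CAaxioms1}, is long but mechanical once $\dee C=0$ is inserted in the right place.
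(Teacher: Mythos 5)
Your outline follows the paper's own strategy essentially step for step: your $\Upsilon$ is the paper's construction of the standard Courant algebroid $E^s(\Abd;C,\hoist)$ from a coherent pair (Theorem~\ref{Thm:StandardCourantStructure} together with Proposition~\ref{Lem:Csclosediff}), your $\Phi$ is the assignment $E\mapsto(\Abd^s,C^s)$ via a dissection with well-definedness coming from Proposition~\ref{Lem:IPullBackDiffCoboundary}, and part~(2) is obtained exactly as in Proposition~\ref{pro:3formclass}. The coherence computation for $C_{\dissection}$ and the description of the two round trips are correct.

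The one substantive step you leave open --- that the isomorphism class of $\Upsilon(\Abd,C)$ is unchanged when $C$ is replaced by $C+\dee\varphi$ with $\varphi\in\subcomplex^2(\Abd)$, and is independent of the hoist --- is precisely where the paper spends most of its effort, and your remark that it ``needs a careful analysis'' is not yet an argument. The paper's resolution is concrete: a change of hoist $\hoist\mapsto\hoist-J$ shifts the coherent form by $\dee\Phi_J$ and yields an explicitly isomorphic standard Courant algebroid (Lemmas~\ref{Lem:goodpairStandardChange} and~\ref{Lem:almostidenticalgoodpairsIso}); then, for a fixed hoist, any $\varphi\in\subcomplex^2(\Abd)$ decomposes as $\Phi_{J'}+\anchorUps\omega$ with $J':F\to\GG$ and $\omega\in\sections{\wedge^2\Fs}$, and the two pieces are treated separately. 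Coherence of both $C$ and $C+\dee\Phi_{J'}$ forces $J'(F)\subset Z(\GG)$ together with a cocycle condition on $J'$, and then Proposition~\ref{GeneralIsomorphism} with $\tau=\id$ and bundle maps $\thalf J'$ and $-\tfrac{1}{4}(J')^*\circ J'$ produces the required isomorphism (Lemma~\ref{Lem:CplusdPhiJproduceSameAsC}), while the $\anchorUps(\dF\omega)$ piece is absorbed by the shear $\xi+\gr+x\mapsto(\xi-\omega^\sharp(x))+\gr+x$ (Lemma~\ref{Lem:CplusanchorUpsdFomega}). Without this decomposition and the explicit isomorphisms it feeds into, your statement that ``one uses $\beta$ to correct the evident bundle isomorphism'' remains an assertion; supplying these three lemmas (or an equivalent argument) is what closes the proof.
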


The proof is postponed to Section~\ref{Sec:proofs}.

\section{Standard structures}

\subsection{Standard Courant algebroid structures on $\Fs\oplus \GG\oplus F$}

Let $F$ be an integrable subbundle of $TM$ and $\GG$ be a bundle of quadratic Lie algebras over $M$.

In this section, we are only interested in those Courant algebroid structures on $\fsgf$ whose anchor map is
\begin{equation} \label{Eqt:Standardrho} \rho(\xi_1+\gr_1+x_1)=x_1 ,\end{equation}
whose pseudo-metric is
\begin{equation} \label{Eqt:Standardip}
\ip{\xi_1+\gr_1+\fx_1}{\xi_2+\gr_2+\fx_2} = \thalf\duality{\xi_1}{\fx_2}+\thalf\duality{\xi_2}{\fx_1}+\ipG{\gr_1}{\gr_2} ,\end{equation}
and whose Dorfman bracket satisfies
\begin{equation} \label{Eqt:prGdb} \ProjectionTo{\GG} (\db{\gr_1}{\gr_2})=\lbG{\gr_1}{\gr_2} ,\end{equation}
where $\xi_1,\xi_2\in\Fs$, $\gr_1,\gr_2\in\GG$, and $\fx_1,\fx_2\in F$.
We call them \textbf{standard} Courant algebroid structures on $\fsgf$.

Given such a standard Courant algebroid structure on $\fsgf$, we define
$\ConnectionDer$, $\Curvature$, and $\HForm$ as in Eqs.~\eqref{Eqt:inducedConnectionDer}-\eqref{Eqt:inducedHForm}.
The following Lemma shows that the Dorfman bracket on $\sections{\fsgf}$ can be recovered from $\ConnectionDer$, $\Curvature$, and $\HForm$.

\begin{lem} \label{Prop:CourantbracketIntermsofData2}
Let $\PForm:\sections{\GG}\otimes\sections{\GG}\to\sections{\Fs}$ and
$\mathcal{Q}: \sections{F}\otimes\sections{\GG}\to
\sections{F^*}$ be the maps defined by
\begin{equation} \label{Eqt:definationofPForm}
\duality{\PForm(\gr_1,\gr_2)}{y} = 2\ipG{\gr_2}{\ConnectionDer_y\gr_1} \end{equation}
and
\begin{equation} \label{Eqt:definationofQ}
\duality{\mathcal{Q}(x,\gr)}{y} = \ipG{\gr}{\Curvature(x,y)} .\end{equation}
Then we have
\begin{align}
\label{FSGFx1x2}
&\db{\fx_1}{\fx_2}=\HForm(\fx_1,\fx_2,\qm)
+\Curvature(\fx_1,\fx_2)+\lb{\fx_1}{\fx_2}, \\ \label{FSGFg1g2}
&\db{\gr_1}{\gr_2}=\PForm
(\gr_1,\gr_2)+\lbG{\gr_1}{\gr_2}, \\ \label{FSGFxi1g2}
&\db{\xi_1}{\gr_2}=\db{\gr_1}{\xi_2}=\db{\xi_1}{\xi_2}=0, \\ \label{FSGFx1xi2}
&\db{\fx_1}{\xi_2}=\LieDer_{\fx_1}\xi_2, \\ \label{FSGFxi1x2}
&\db{\xi_1}{\fx_2}=-\LieDer_{\fx_2}\xi_1+
\dF\duality{\xi_1}{\fx_2}, \\ \label{FSGFx1g2}
&\db{\fx_1}{\gr_2}=-\db{\gr_2}{\fx_1}=-2\mathcal{Q}(\fx_1,\gr_2)+
\ConnectionDer_{\fx_1}\gr_2,
\end{align}
for all $\xi_1,\xi_2\in\sections{\Fs}$, $\gr_1,\gr_2\in\sections{\GG}$,
$\fx_1,\fx_2\in\sections{F}$.
Here $\dF:\cinf{M}\to\sections{F^*}$ denotes the leafwise de~Rham differential.
\end{lem}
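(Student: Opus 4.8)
The plan is to prove Lemma~\ref{Prop:CourantbracketIntermsofData2} by systematically decomposing the Dorfman bracket $\db{e_1}{e_2}$ for $e_1,e_2$ ranging over the three summands $\Fs$, $\GG$, $F$, and using the Courant algebroid axioms~\eqref{algian:CAaxioms1}--\eqref{algian:CAaxioms6} together with the constraints~\eqref{Eqt:Standardrho}--\eqref{Eqt:prGdb} defining a standard structure. First I would record two basic consequences of the standard pseudo-metric~\eqref{Eqt:Standardip}: that $\Fs$ is isotropic and equals $(\ker\rho)\ortho$ (so $\Fs=\Xi\inv\rho^*(\Fs)$ up to the factor $\thalf$), and that the derived bracket operator $\DD$ sends $f\in\cinf{M}$ into $\Fs$, with $\DD f = \thalf\Xi\inv\rho^*(\dF f)$ --- here only the leafwise differential appears because $\rho(e)f$ only sees the $F$-component of $e$. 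Then the identities involving $\xi\in\sections{\Fs}$ are nearly immediate: since $\sections{\Fs}=\sections{(\ker\rho)\ortho}$ is spanned by elements $\DD f$ (locally), axiom~\eqref{algian:CAaxioms5} gives $\db{\DD f}{e}=0$, yielding~\eqref{FSGFxi1g2} for $\db{\xi_1}{\gr_2}$, $\db{\xi_1}{\xi_2}$, while~\eqref{algian:CAaxioms4} forces $\db{\gr_1}{\xi_2}=-\db{\xi_2}{\gr_1}+2\DD\ip{\xi_2}{\gr_1}=0$ as well.

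Next I would treat~\eqref{FSGFx1xi2} and~\eqref{FSGFxi1x2}. For $\db{\fx_1}{\xi_2}$, I would show it lands in $\sections{\Fs}$ by pairing with $\fy\in\sections{F}$ and with $\gs\in\sections{\GG}$ using the metric-invariance axiom~\eqref{algian:CAaxioms6}: $\rho(\fx_1)\ip{\xi_2}{\gs}=\ip{\db{\fx_1}{\xi_2}}{\gs}+\ip{\xi_2}{\db{\fx_1}{\gs}}$; the left side vanishes since $\ip{\xi_2}{\gs}=0$, and $\ip{\xi_2}{\db{\fx_1}{\gs}}$ vanishes because $\Fs$ is isotropic and the $\Fs$-component of $\db{\fx_1}{\gs}$ pairs trivially with $\xi_2\in\Fs$ --- wait, one must be careful: it pairs trivially only with the $\Fs$-part, so one needs that $\db{\fx_1}{\gs}$ has no $F$-component, which follows from~\eqref{algian:CAaxioms2} since $\rho(\db{\fx_1}{\gs})=[\fx_1,\rho(\gs)]=[\fx_1,0]=0$. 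Then pairing $\db{\fx_1}{\xi_2}$ with $\fy$ and using~\eqref{algian:CAaxioms6} again against $\duality{\xi_2}{\fy}$ identifies it with $\LieDer_{\fx_1}\xi_2$ (the leafwise Lie derivative). The formula~\eqref{FSGFxi1x2} then follows from~\eqref{FSGFx1xi2} via the symmetry axiom~\eqref{algian:CAaxioms4}: $\db{\xi_1}{\fx_2}=-\db{\fx_2}{\xi_1}+2\DD\ip{\fx_2}{\xi_1}=-\LieDer_{\fx_2}\xi_1+\dF\duality{\xi_1}{\fx_2}$, using the Cartan formula $\LieDer_{\fx}=\ii{\fx}\dF+\dF\ii{\fx}$ on $\Fs$.

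The remaining three formulas~\eqref{FSGFx1x2}, \eqref{FSGFg1g2}, \eqref{FSGFx1g2} are where the real content lies. For each, I would first pin down which summands the bracket can hit: $\rho$-compatibility~\eqref{algian:CAaxioms2} controls the $F$-component ($\db{\fx_1}{\fx_2}$ has $F$-component $[\fx_1,\fx_2]$; $\db{\gr_1}{\gr_2}$ and $\db{\fx_1}{\gr_2}$ have vanishing $F$-component), and the prescription~\eqref{Eqt:prGdb} together with definitions~\eqref{Eqt:inducedConnectionDer}--\eqref{Eqt:inducedHForm} fixes the $\GG$- and $\Fs$-components of the ``diagonal'' brackets $\db{\fx}{\fx'}$, $\db{\fx}{\gr}$, $\db{\gr}{\gr'}$. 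The only genuinely new points are: (i) the $\Fs$-component of $\db{\gr_1}{\gr_2}$ is $\PForm(\gr_1,\gr_2)$ as in~\eqref{Eqt:definationofPForm} --- this I would extract by pairing with $\fy\in\sections{F}$ and invoking~\eqref{algian:CAaxioms6} in the form $\rho(\fy)\ip{\gr_1}{\gr_2} = \ip{\db{\fy}{\gr_1}}{\gr_2}+\ip{\gr_1}{\db{\fy}{\gr_2}}$ combined with~\eqref{algian:CAaxioms4} to flip $\db{\fy}{\gr_i}$ into $\db{\gr_i}{\fy}$, so that $\duality{\PForm(\gr_1,\gr_2)}{\fy}$ emerges as $2\ipG{\gr_2}{\ConnectionDer_{\fy}\gr_1}$ --- and (ii) the $\Fs$-component of $\db{\fx_1}{\gr_2}$ is $-2\mathcal{Q}(\fx_1,\gr_2)$ as in~\eqref{Eqt:definationofQ}, obtained by pairing with $\fy$ and using~\eqref{algian:CAaxioms6} with $\rho(\fx_1)\ip{\gr_2}{\fy}=0$ to get $\ip{\db{\fx_1}{\gr_2}}{\fy}=-\ip{\gr_2}{\db{\fx_1}{\fy}}=-\ipG{\gr_2}{\Curvature(\fx_1,\fy)}$. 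The skew-symmetry $\db{\fx_1}{\gr_2}=-\db{\gr_2}{\fx_1}$ in~\eqref{FSGFx1g2} follows from~\eqref{algian:CAaxioms4} once one checks $\ip{\fx_1}{\gr_2}=0$, which is immediate from~\eqref{Eqt:Standardip}. I expect the main obstacle to be bookkeeping: keeping the three components of each bracket straight while correctly tracking the factors of $\thalf$ and $\tfrac{1}{3}$ coming from the pseudo-metric normalization and from symmetrizing the Dorfman versus Courant brackets, and making sure the $\cinf{M}$-linearity subtleties (e.g.\ $\ConnectionDer$ being a connection, not tensorial in the $\GG$-slot) are handled consistently when one passes between the Dorfman bracket and its pieces. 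None of the individual verifications is deep, but the consistency of signs and coefficients across all six identities is the delicate part.
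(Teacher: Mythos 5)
Your proposal is correct and follows essentially the same route as the paper's proof: decompose each bracket into its $F$-, $\GG$- and $F^*$-components, fix the $F$-component with axiom~\eqref{algian:CAaxioms2}, the $\GG$-components with~\eqref{Eqt:prGdb} and the definitions of $\ConnectionDer$, $\Curvature$, $\HForm$, and extract the remaining $F^*$- (resp.\ $\GG$-) components by pairing with sections of $F$ (resp.\ $\GG$) via axioms~\eqref{algian:CAaxioms4} and~\eqref{algian:CAaxioms6}. One small slip to fix in the write-up: for~\eqref{FSGFg1g2}, the instance of~\eqref{algian:CAaxioms6} you quote, $\rho(\fy)\ip{\gr_1}{\gr_2}=\ip{\db{\fy}{\gr_1}}{\gr_2}+\ip{\gr_1}{\db{\fy}{\gr_2}}$, does not involve $\db{\gr_1}{\gr_2}$ at all (it only yields the metric-compatibility of $\ConnectionDer$); the instance you need is $0=\rho(\gr_1)\ip{\gr_2}{\fy}=\ip{\db{\gr_1}{\gr_2}}{\fy}+\ip{\gr_2}{\db{\gr_1}{\fy}}$, after which flipping $\db{\gr_1}{\fy}=-\db{\fy}{\gr_1}$ via~\eqref{algian:CAaxioms4} gives $\duality{\PForm(\gr_1,\gr_2)}{\fy}=2\ip{\db{\gr_1}{\gr_2}}{\fy}=2\ipG{\gr_2}{\ConnectionDer_{\fy}\gr_1}$, matching~\eqref{Eqt:definationofPForm}. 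Also, no factors of $\tfrac{1}{3}$ occur anywhere in this lemma (they only enter later, in the $3$-form $C_{\Econnection}$), so that worry is moot here.
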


\begin{proof}
Eqs.~\eqref{FSGFx1x2} and~\eqref{FSGFxi1g2} are quite
obvious. To get Eq.~\eqref{FSGFg1g2}, we observe that, for all
$x\in\sections{F}$,
\begin{multline*}
\duality{\ProjectionTo{\Fs}(\db{\gr_1}{\gr_2})}{x}
= 2\ip{\ProjectionTo{\Fs}(\db{\gr_1}{\gr_2})}{x}
= 2\ip{\db{\gr_1}{\gr_2}}{x}
= -2\ip{\gr_2}{\db{\gr_1}{x}} \\
= 2\ip{\gr_2}{\db{x}{\gr_1}}
= 2\ip{\gr_2}{\ProjectionTo{\GG}(\db{x}{\gr_1})}
= 2\ip{\gr_2}{\ConnectionDer_{x}{\gr_1}}.
\end{multline*}
Eq.~\eqref{FSGFx1g2} can be proved similarly. To prove Eq.~\eqref{FSGFx1xi2},
we only need to show that
$\ProjectionTo{\GG}(\db{x_1}{\xi_2})=0$. In fact, for all
$\gr\in\sections{\GG}$,
\begin{equation*}
\ipG{\ProjectionTo{\GG}(\db{x_1}{\xi_2})}{\gr}
= \ip{\db{x_1}{\xi_2}}{\gr}
= \LieDer_{x_1}\ip{\xi_2}{\gr}-\ip{\xi_2}{\db{x_1}{\gr}}
= -\thalf\duality{\xi_2}{\ProjectionTo{F}(\db{x_1}{\gr})}
= 0.
\end{equation*}
Eq.~\eqref{FSGFxi1x2} follows directly from Eq.~\eqref{FSGFx1xi2}.
\end{proof}

\begin{prop}\label{Prop:CourantbracketIntermsofData3}
The following identities hold:
\begin{gather}
\label{ipGinvariant} \ld{x}\ipG{\gr}{\gs}=\ipG{\ConnectionDer_{x}{\gr}}{\gs}+\ipG{\gr}{\ConnectionDer_{x}{\gs}}, \\
\label{lbGinvariant} \ConnectionDer_{x}\lbG{\gr}{\gs}=\lbG{\ConnectionDer_x\gr}{\gs}+\lbG{\gr}{\ConnectionDer_x\gs}, \\
\label{dCurvautre0} \big(\ConnectionDer_x\Curvature(y,z)-\Curvature(\lb{x}{y},z)\big)+\CP=0, \\
\label{CurvatureConnectionDer} \ConnectionDer_x\ConnectionDer_y\gr-\ConnectionDer_y\ConnectionDer_x\gr
-\ConnectionDer_{\lb{x}{y}}\gr=\lbG{\Curvature(x,y)}{\gr},
\end{gather}
for all $x,y,z\in \sections{F}$ and $\gr,\gs\in\sections{\GG}$.

Moreover, we have
\begin{equation} \label{dMHPontryagin} \dF\HForm=\PontryaginCocycleR ,\end{equation}
where $\PontryaginCocycleR$ denotes the $4$-form on $F$ given by
\[ \PontryaginCocycleR(x_1,x_2,x_3,x_4) = \tfrac{1}{4} \sum_{\sigma\in S_4} \sgn(\sigma)
\ipG{\Curvature(x_{\sigma(1)},x_{\sigma(2)})}{\Curvature(x_{\sigma(3)},x_{\sigma(4)})}, \]
where $x_1,x_2,x_3,x_4\in F$.
\end{prop}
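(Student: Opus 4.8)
The plan is to derive all six identities directly from the Courant algebroid axioms \eqref{algian:CAaxioms1}--\eqref{algian:CAaxioms6}, using the explicit bracket formulas of Lemma~\ref{Prop:CourantbracketIntermsofData2} together with the compatibility relation \eqref{Prop:fatofEquadratic}. Identity \eqref{ipGinvariant} is immediate: apply \eqref{algian:CAaxioms6} with $e_1=x\in\sections{F}$ and $e_2=\gr$, $e_3=\gs$, note that $\rho(x)=x$ acts on $\CIM$ as the leafwise vector field, and that the $\GG$-components of $\db{x}{\gr}$ and $\db{x}{\gs}$ are by definition $\ConnectionDer_x\gr$ and $\ConnectionDer_x\gs$, while the $\Fs$- and $F$-components pair trivially against sections of $\GG$ under the pseudo-metric \eqref{Eqt:Standardip}. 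For \eqref{lbGinvariant} and \eqref{CurvatureConnectionDer} I would feed the Jacobi (Leibniz) identity \eqref{algian:CAaxioms1} with $e_1=x\in\sections{F}$ and $e_2,e_3\in\sections{\GG}$ (resp.\ $e_1=x$, $e_2=y\in\sections{F}$, $e_3=\gr\in\sections{\GG}$), then project onto $\GG$ using $\ProjectionTo{\GG}$; the bracket formulas \eqref{FSGFg1g2}, \eqref{FSGFx1x2}, \eqref{FSGFx1g2} identify each term, and the $\mathcal{P}$- and $\mathcal{Q}$-contributions land outside $\GG$ and drop out of the projection. Identity \eqref{dCurvautre0} is the $\GG$-projection of the Leibniz identity applied to three sections $x,y,z\in\sections{F}$, i.e.\ it is just the Bianchi-type identity for the ``curvature'' $R$ coming from axiom \eqref{algian:CAaxioms1}; here one uses \eqref{FSGFx1x2} and that $\ProjectionTo{\GG}\db{x}{[y,z]}=\Curvature(x,[y,z])$ and $\ProjectionTo{\GG}\db{[x,y]}{z}=\Curvature([x,y],z)$.

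The last identity \eqref{dMHPontryagin} is the genuinely substantive one and I expect it to be the main obstacle. The strategy is to compute $\ip{\db{x_1}{x_2}}{x_3\wedge\cdots}$-type quantities, or more directly to exploit axiom \eqref{algian:CAaxioms6} in the form $\rho(x_1)\ip{\db{x_2}{x_3}}{x_4} = \ip{\db{x_1}{\db{x_2}{x_3}}}{x_4}+\ip{\db{x_2}{x_3}}{\db{x_1}{x_4}}$ for $x_1,x_2,x_3,x_4\in\sections{F}$, and expand using \eqref{FSGFx1x2}. On the left, $\ip{\db{x_2}{x_3}}{x_4}=\thalf\HForm(x_2,x_3,x_4)$ by \eqref{Eqt:Standardip}, so $\rho(x_1)$ of it, antisymmetrized over the four arguments, reconstructs (up to the combinatorial constants in the leafwise de Rham differential) the term $\dF\HForm(x_1,x_2,x_3,x_4)$. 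On the right, the term $\ip{\db{x_2}{x_3}}{\db{x_1}{x_4}}$ contains the cross pairing $\ipG{\Curvature(x_2,x_3)}{\Curvature(x_1,x_4)}$ between the $\GG$-components, which upon full antisymmetrization in $S_4$ assembles into $\PontryaginCocycleR(x_1,x_2,x_3,x_4)$; the remaining cross terms between the $\HForm$-component of one bracket and the $[x,y]$-component of the other, together with the $\ip{\db{x_1}{\db{x_2}{x_3}}}{x_4}$ term (expanded once more via \eqref{FSGFx1x2}), must be shown to cancel against the $\rho(x_1)\lb{x_2}{x_3}$-type contributions and the lower-order pieces of $\dF\HForm$ — this cancellation is precisely the place where the Jacobi identity for $[\,\cdot\,,\cdot\,]$ on $\sections{F}$ and the already-established relation \eqref{dCurvautre0} get used.

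Concretely, I would organize the bookkeeping as follows: (i) fix $x_1,x_2,x_3,x_4\in\sections{F}$ and write out $\db{x_1}{\db{x_2}{x_3}}$ using \eqref{FSGFx1x2} twice, sorting the result into its $\Fs$-, $\GG$-, and $F$-components; (ii) pair with $x_4$ and use \eqref{Eqt:Standardip} so that only the $\Fs$-component survives, giving an expression in $\HForm$, $\ConnectionDer\HForm$, $\Curvature$, $\mathcal{Q}$ and iterated Lie brackets of the $x_i$; (iii) symmetrize the Leibniz identity \eqref{algian:CAaxioms1} (equivalently use \eqref{algian:CAaxioms6}) and take the signed sum over $\sigma\in S_4$; (iv) collect terms: the totally antisymmetric part of $\rho(x_{\sigma(1)})\HForm(x_{\sigma(2)},x_{\sigma(3)},x_{\sigma(4)})$ minus the $\HForm([x_{\sigma(1)},x_{\sigma(2)}],x_{\sigma(3)},x_{\sigma(4)})$ terms is exactly $4\,\dF\HForm(x_1,x_2,x_3,x_4)$ by the Cartan formula for $\dF$; the terms quadratic in $\Curvature$ assemble into $-\PontryaginCocycleR$ (the sign and the factor $\tfrac14$ matching the normalization in the statement); and everything else cancels by Jacobi for $[\,\cdot\,,\cdot\,]$ and by \eqref{dCurvautre0}, \eqref{CurvatureConnectionDer}. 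The only real danger is sign and combinatorial-factor errors in step (iv); I would double-check the normalization by testing the formula on the model case $\Abd=\GG\times TM$ with a hoist, where $\Curvature\equiv 0$ forces $\HForm$ closed, consistent with the displayed example.
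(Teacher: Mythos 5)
Your proposal is correct and follows essentially the same route as the paper: each identity is the appropriate component of a single Courant axiom (axiom \eqref{algian:CAaxioms6} for \eqref{ipGinvariant}, the Leibniz rule \eqref{algian:CAaxioms1} on $(x,\gr,\gs)$, $(x,y,z)$ and $(x,y,\gr)$ for the rest), expanded via the bracket formulas of Lemma~\ref{Prop:CourantbracketIntermsofData2}, with \eqref{dCurvautre0} and \eqref{dMHPontryagin} being the $\GG$- and $\Fs$-components of the same instance of the Leibniz rule. The only cosmetic difference is in \eqref{dMHPontryagin}: the full $S_4$-antisymmetrization you propose is unnecessary (the single Leibniz identity on $x,y,z$ paired with $w$ already yields $\dF\HForm=\PontryaginCocycleR$ on the nose, the Cartan-formula bracket terms matching term by term without invoking Jacobi or \eqref{dCurvautre0}), though it is harmless.
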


\begin{proof}
Applying the expressions found for the Dorfman bracket in Lemma~\ref{Prop:CourantbracketIntermsofData2},
Eq.~\eqref{ipGinvariant} follows from
\[ \LieDer_{x}{\ipG{\gr}{\gs}} = \ipG{\db{x}{\gr}}{\gs} + \ipG{\gr}{\db{x}{\gs}} ,\]
Eq.~\eqref{lbGinvariant} from
\[ \db{x}{(\db{\gr}{\gs})} = \db{(\db{x}{\gr})}{\gs} + \db{\gr}{(\db{x}{\gs})} ,\]
Eqs.~\eqref{dCurvautre0} and~\eqref{dMHPontryagin} from
\[ \db{x}{(\db{y}{z})} = \db{(\db{x}{y})}{z} + \db{y}{(\db{x}{z})} ,\]
and Eq.~\eqref{CurvatureConnectionDer} from
\[ \db{x}{(\db{y}{\gr})} = \db{(\db{x}{y})}{\gr} + \db{y}{(\db{x}{\gr})} .\qedhere \]
\end{proof}

In summary, we have proved the following
\begin{thm}\label{Thm:StandardCourantStructure}
A Courant algebroid structure on $\fsgf$, with pseudo-metric
\eqref{Eqt:Standardip} and anchor map~\eqref{Eqt:Standardrho}, and
satisfying Eq.~\eqref{Eqt:prGdb}, is completely determined by
an $F$-connection $\ConnectionDer$ on $\GG$,
a bundle map $\Curvature:\wedge^2 F\to\GG$,
and a $3$-form $\HForm\in\sections{\wedge^3\Fs}$
satisfying the compatibility conditions~\eqref{ipGinvariant}-\eqref{dMHPontryagin}.
The Dorfman bracket on $\fsgf$ is then given by Eqs.~\eqref{FSGFx1x2}-\eqref{FSGFx1g2}.
\end{thm}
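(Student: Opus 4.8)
The plan is to check that the two directions of the correspondence are mutually inverse, using Lemma~\ref{Prop:CourantbracketIntermsofData2} for one direction and Proposition~\ref{Prop:CourantbracketIntermsofData3} (together with axiom verification) for the other. For the forward direction, I would start from an arbitrary Courant algebroid structure on $\fsgf$ with the prescribed pseudo-metric \eqref{Eqt:Standardip}, anchor \eqref{Eqt:Standardrho}, and satisfying \eqref{Eqt:prGdb}. Define $\ConnectionDer$, $\Curvature$, $\HForm$ by \eqref{Eqt:inducedConnectionDer}--\eqref{Eqt:inducedHForm}; Proposition~\ref{Prop:CourantbracketIntermsofData} already records their tensoriality/skew-symmetry (so $\ConnectionDer$ is an $F$-connection on $\GG$, $\Curvature\in\sections{\Hom(\wedge^2F,\GG)}$, $\HForm\in\sections{\wedge^3\Fs}$). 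Then Lemma~\ref{Prop:CourantbracketIntermsofData2} shows the full Dorfman bracket is reconstructed from this triple via \eqref{FSGFx1x2}--\eqref{FSGFx1g2}, and Proposition~\ref{Prop:CourantbracketIntermsofData3} shows the triple necessarily satisfies \eqref{ipGinvariant}--\eqref{dMHPontryagin}. This establishes that the assignment (Courant structure) $\mapsto(\ConnectionDer,\Curvature,\HForm)$ lands in the claimed set and is injective.

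For the backward direction — the substantive half — I would take an arbitrary triple $(\ConnectionDer,\Curvature,\HForm)$ satisfying \eqref{ipGinvariant}--\eqref{dMHPontryagin}, \emph{define} a bracket $\db{}{}$ on $\sections{\fsgf}$ by the formulas \eqref{FSGFx1x2}--\eqref{FSGFx1g2} (with $\PForm$ and $\mathcal{Q}$ given by \eqref{Eqt:definationofPForm}--\eqref{Eqt:definationofQ}), and verify that $(\fsgf,\ip{\qm}{\qm},\db{}{},\rho)$ satisfies the Courant algebroid axioms \eqref{algian:CAaxioms1}--\eqref{algian:CAaxioms6}. Axioms \eqref{algian:CAaxioms2}, \eqref{algian:CAaxioms3}, \eqref{algian:CAaxioms4}, \eqref{algian:CAaxioms5}, \eqref{algian:CAaxioms6} reduce to direct inspection of the defining formulas together with the Leibniz/invariance identities \eqref{ipGinvariant} and the leafwise Cartan calculus for $\dF$; in particular \eqref{algian:CAaxioms6} uses \eqref{ipGinvariant} on the $\GG$-components and the definition of $\PForm,\mathcal{Q}$ via the pseudo-metric to pair off the mixed terms. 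The Jacobi/Leibniz identity \eqref{algian:CAaxioms1} is the real work: one decomposes $\db{e_1}{\db{e_2}{e_3}}-\db{\db{e_1}{e_2}}{e_3}-\db{e_2}{\db{e_1}{e_3}}$ by tri-degree in $(F^*,\GG,F)$ and checks each component vanishes. The $FFF$-component yields the usual Bianchi-type identity for $\Curvature$ plus $\dF\HForm=\PontryaginCocycleR$, i.e.\ exactly \eqref{dCurvautre0} and \eqref{dMHPontryagin}; the $FFG$-component reproduces the curvature identity \eqref{CurvatureConnectionDer} and the invariance \eqref{lbGinvariant}; the $FGG$- and $GGG$-components come from \eqref{lbGinvariant}, the Jacobi identity in $\GG$, and \eqref{ipGinvariant}; the components involving $F^*$ are forced by nondegeneracy of the pseudo-metric from the others (since $\db{\xi}{\qm}$ terms are determined by pairing against $F$). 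Finally one checks the two composites are inverse: starting from a triple, forming the bracket, and re-extracting $\ConnectionDer,\Curvature,\HForm$ via \eqref{Eqt:inducedConnectionDer}--\eqref{Eqt:inducedHForm} returns the original triple, which is immediate from \eqref{FSGFx1x2} and \eqref{FSGFx1g2}; conversely Lemma~\ref{Prop:CourantbracketIntermsofData2} already gives the other composite.

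The main obstacle is the bookkeeping in verifying \eqref{algian:CAaxioms1}: there are many tri-degree components and each of the mixed terms $\PForm$, $\mathcal{Q}$, and $\LieDer_x\xi$ contributes cross-terms that must cancel, so one has to organize the computation carefully (e.g.\ by first treating the three ``pure'' cases $FFF$, $GGG$, and the purely $F^*$ case, then the mixed cases, and invoking nondegeneracy to dispatch the $F^*$-valued components). I expect that, modulo this organization, every identity needed is precisely one of \eqref{ipGinvariant}--\eqref{dMHPontryagin}, so no new input beyond Proposition~\ref{Prop:CourantbracketIntermsofData3} is required; the statement is in fact just the assertion that the constraints found there are not only necessary but sufficient.
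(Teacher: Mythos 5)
Your proposal is correct and follows the same route as the paper: the theorem is stated there as a summary of Lemma~\ref{Prop:CourantbracketIntermsofData2} (reconstruction of the Dorfman bracket from $\ConnectionDer,\Curvature,\HForm$) and Proposition~\ref{Prop:CourantbracketIntermsofData3} (necessity of \eqref{ipGinvariant}--\eqref{dMHPontryagin}), exactly as in your forward direction. The only difference is that you spell out the converse verification of axioms \eqref{algian:CAaxioms1}--\eqref{algian:CAaxioms6} by tri-degree, which the paper leaves implicit in the phrase ``in summary''; your organization of that check (pure components first, $F^*$-valued components dispatched by nondegeneracy and metric invariance) is sound and needs no input beyond the stated compatibility conditions.
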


This Courant algebroid structure will be called the standard
Courant algebroid determined by the quintuple $(F,\GG;\ConnectionDer,\Curvature,\HForm)$.

\subsection{Standard quadratic Lie algebroids}\label{Sec:standard3form}

We assume again that $F$ is an integrable
subbundle of $TM$ and $\GG$ is a bundle of quadratic Lie algebras over $M$.

\begin{prop}\label{Prop:StandardQuadraticLieAlgebroidStructure}
Let $\Abd$ be a quadratic Lie algebroid with anchor $\anchor:\Abd\to
TM$ such that $\anchor(\Abd)=F$ and $\ker\anchor $ is isomorphic, as
a bundle of quadratic Lie algebras, to $\GG$. Then any hoist of $\Abd$
determines an isomorphism $\Abd\cong\Abd^s=\GG\oplus F$. The
transported Lie bracket on $\sections{\Abd^s}$ is completely
determined by an $F$-connection $\ConnectionDer$ on $\GG$ and a
bundle map $\Curvature:\wedge^2 F\to\GG$ satisfying the conditions
\eqref{ipGinvariant}-\eqref{CurvatureConnectionDer}, through the
following relations:
\begin{equation*}
\algebroidlb{\gr}{\gs}=\lbG{\gr}{\gs}, \qquad
\algebroidlb{x}{y}=\Curvature({x},{y})+\lb{x}{y}, \qquad
\algebroidlb{x}{\gr}=\ConnectionDer_x\gr,
\end{equation*}
for all $\gr$, $\gs\in \sections{\GG}$, $x$, $y \in\sections{F}$.
\end{prop}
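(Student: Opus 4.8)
The plan is to mimic, at the level of Lie algebroids rather than Courant algebroids, the reduction already carried out in Section~2.1. First I fix a hoist $\hoist:F\to\Abd$, which exists since $\anchor$ is a fibration of vector bundles. Since $\ker\anchor\cong\GG$ as a bundle of quadratic Lie algebras, the hoist yields a splitting of the short exact sequence $0\to\GG\to\Abd\xto{\anchor}F\to 0$, hence an isomorphism of vector bundles $\Abd\cong\GG\oplus F=\Abd^s$ sending $\hoist(x)$ to $x$ and $\gr\in\GG$ to $\gr$. Transporting the Lie bracket of $\Abd$ through this isomorphism, I get a bracket $\algebroidlb{\qm}{\qm}$ on $\sections{\Abd^s}$ whose anchor is the projection $\Abd^s\to F\subset TM$. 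I then define $\ConnectionDer_x\gr:=\ProjectionTo{\GG}\algebroidlb{x}{\gr}$ for $x\in\sections{F}$, $\gr\in\sections{\GG}$, and $\Curvature(x,y):=\algebroidlb{x}{y}-\lb{x}{y}$, which lands in $\GG$ by Eq.~\eqref{algian:CAaxioms2}-style compatibility of anchors (here simply because both $\algebroidlb{x}{y}$ and $\lb{x}{y}$ have the same image $\lb{x}{y}$ under $\anchor$). This is just Eq.~\eqref{Eqt:Curvature} read in the splitting.

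Next I must check that these three pieces of data recover the bracket in the claimed form and satisfy the compatibility conditions. For the first: $\algebroidlb{\gr}{\gs}$ lies in $\GG$ since $\GG$ is an ideal, and equals the fiberwise bracket $\lbG{\gr}{\gs}$ by definition of the Lie algebra bundle of $\Abd$; the Leibniz rule for $\anchor$ together with $\anchor(\gr)=0$ shows $\algebroidlb{x}{\gr}$ is $\cinf{M}$-linear in $\gr$ with the expected derivation property in $x$, so it has no $F$-component and equals $\ConnectionDer_x\gr$; and $\algebroidlb{x}{y}=\Curvature(x,y)+\lb{x}{y}$ by the definition of $\Curvature$. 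For the compatibility conditions: Eq.~\eqref{ipGinvariant} is exactly the defining condition of a quadratic Lie algebroid in Definition~\ref{def:quad} specialized to $X=x\in\sections{F}$; Eq.~\eqref{lbGinvariant} is the Jacobi identity $\algebroidlb{x}{\algebroidlb{\gr}{\gs}}=\algebroidlb{\algebroidlb{x}{\gr}}{\gs}+\algebroidlb{\gr}{\algebroidlb{x}{\gs}}$ projected to $\GG$ (using that $\algebroidlb{x}{\gr}$ has no $F$-part); Eq.~\eqref{dCurvautre0} is the $\GG$-component of the Jacobi identity for three sections of $F$; and Eq.~\eqref{CurvatureConnectionDer} is the $\GG$-component of the Jacobi identity $\algebroidlb{x}{\algebroidlb{y}{\gr}}+\CP$, expanded using the bracket formulas just established. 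The $F$-components of these Jacobi identities reduce to the Jacobi identity for $\lb{\qm}{\qm}$ on $TM$ and carry no extra information. The Leibniz rule for $\algebroidlb{\qm}{\qm}$ over $\cinf{M}$ follows from the corresponding rule on $\Abd$.

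Conversely, given an $F$-connection $\ConnectionDer$ on $\GG$ and a bundle map $\Curvature:\wedge^2F\to\GG$ satisfying \eqref{ipGinvariant}-\eqref{CurvatureConnectionDer}, I must verify that the three displayed formulas define a genuine quadratic Lie algebroid structure on $\GG\oplus F$. Skew-symmetry and the Leibniz rule are immediate from the stated formulas; the Jacobi identity is checked componentwise on the three types of triples $(\gr,\gs,\gt)$, $(x,\gr,\gs)$, $(x,y,\gr)$, $(x,y,z)$, where the $\GG$-components reduce respectively to the Jacobi identity for $\lbG{\qm}{\qm}$, to \eqref{lbGinvariant}, to \eqref{CurvatureConnectionDer}, and to \eqref{dCurvautre0}, and the $F$-components are automatic; ad-invariance of $\ipG{\qm}{\qm}$ under $\sections{\GG}$ holds fiberwise and under $\sections{F}$ is precisely \eqref{ipGinvariant}. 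I expect the main bookkeeping obstacle to be organizing the several Jacobi-identity verifications cleanly and keeping track of which component of each identity yields which compatibility relation; this is entirely parallel to the proof of Proposition~\ref{Prop:CourantbracketIntermsofData3} and Theorem~\ref{Thm:StandardCourantStructure}, and no genuinely new difficulty arises. I therefore omit the routine details.
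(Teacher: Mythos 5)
Your argument is correct, and it is the expected one: the paper states this proposition without proof (it is the standard description of a regular Lie algebroid split by a connection, parallel to Proposition~\ref{Prop:CourantbracketIntermsofData3} and Theorem~\ref{Thm:StandardCourantStructure}), and your componentwise unpacking of the Jacobi identity — with \eqref{lbGinvariant}, \eqref{dCurvautre0}, \eqref{CurvatureConnectionDer} arising from the triples $(x,\gr,\gs)$, $(x,y,z)$, $(x,y,\gr)$ and \eqref{ipGinvariant} from Definition~\ref{def:quad} — is exactly how the authors would fill it in. Both directions (extraction of $\ConnectionDer,\Curvature$ from a hoist and reconstruction of the bracket from such data) are covered, so nothing is missing.
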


This quadratic Lie algebroid $\Abd^s$ will be called the standard
quadratic Lie algebroid determined by the quadruple
$(F,\GG;\ConnectionDer,\Curvature)$.

Although it is similar to the proof of Corollary~3.2. in~\cite{MR2360313},
we now sketch a proof of Theorem~\ref{Thm:Bressler'sThm} for completeness.

\begin{proof}[Proof of Theorem~\ref{Thm:Bressler'sThm}]
Given a regular Courant algebroid $E$, choose a dissection to
identify it to $\fsgf$. Then, according to
Eq.~\eqref{dMHPontryagin}, the Pontryagin 4-cocycle of the ample Lie
algebroid $\ample_{E}$ associated to $E$ is a coboundary.
Conversely, given a quadratic Lie algebroid $\Abd$ with anchor
$\anchor:\Abd\to TM$ such that $\anchor \Abd =F$ and $\ker\anchor $
is isomorphic to $\GG$ as a bundle of quadratic Lie algebras, assume
there is a hoist $\Connection$ of the Lie algebroid $\Abd$ such that
the Pontryagin $4$-cocycle $\PontryaginCocycle$ is a coboundary, say
$\dee^{F}\HForm$ for some $\HForm\in\sections{\wedge^3\Fs}$. We
define an $F$-connection $\ConnectionDer^{\Connection}$ on $\GG$ by
setting
\begin{equation}\label{Eqn:ConnectionDeruperscribt}
\ConnectionDer^{\Connection}_{x}\gr = \algebroidlb{\Connection(x)}{\gr},
\qquad\forall\;x\in\sections{F},\;\gr\in\sections{\GG}.
\end{equation}
The quintuple $(F,\GG;\ConnectionDer^{\Connection},\Curvature^{\Connection},\HForm)$
satisfies all conditions in Theorem~\ref{Thm:StandardCourantStructure}
and it is clear that $\Abd$ is the ample Lie algebroid associated to the Courant algebroid determined by this quintuple.
\end{proof}

\subsection{Standard $3$-forms on $\Abd^s$ and $E^s$}
\label{Sec:standard3formAbdsEs}

Let $\Abd^s$ denote the standard quadratic Lie algebroid determined by the quadruple
$(F,\GG;\nabla,R)$. Given any $\HForm\in\sections{\wedge^3\Fs}$, we define a $3$-form
${C^s}\in\sections{\wedge^3(\Abd^s)^*}$ by
\begin{equation}\label{labelrouge}
C^s(\gr+\fx,\gs+\fy,\gt+\fz) = \HForm(\fx,\fy,\fz)-\ipG{\lbG{\gr}{\gs}}{\gt}
+\ipG{\Curvature(\fx,\fy)}{\gt}+\ipG{\Curvature(\fy,\fz)}{\gr}+\ipG{\Curvature(\fz,\fx)}{\gs},
\end{equation}
for all $\gr,\gs,\gt\in\sections{\GG}$ and $\fx,\fy,\fz\in\sections{F}$.

\begin{prop}\label{Lem:Csclosediff}
The following statements are equivalent:
\begin{enumerate}
\item $\HForm$ satisfies Eq.~\eqref{dMHPontryagin};
\item $C^s$ is a closed $3$-form;
\item $C^s$ is a \good\ $3$-form.
\end{enumerate}
\end{prop}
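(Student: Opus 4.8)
The plan is to prove the chain of implications $(1)\Rightarrow(2)\Rightarrow(3)\Rightarrow(1)$, where $(3)\Rightarrow(1)$ is essentially immediate and $(1)\Rightarrow(2)$ is the computational core. I begin with $(3)\Rightarrow(1)$: if $C^s$ is $\Connection$-coherent for some hoist $\Connection$, then in particular it is closed, which gives us $(2)$ for free, but more directly, Definition~\ref{Defn:good} forces the relations $C^s(\gr,\gs,\gt) = -\ipG{\lbG{\gr}{\gs}}{\gt}$ and $C^s(\gr,\Connection(x),\Connection(y)) = \ipG{\gr}{\Curvature^{\Connection}(x,y)}$, etc., and comparing with the explicit formula~\eqref{labelrouge} (for the standard hoist $\Connection(x) = x \in \GG\oplus F$, so that $\Connection(F)$ is the obvious complement and $\Curvature^{\Connection} = \Curvature$) pins down everything; closedness of $C^s$ then yields~\eqref{dMHPontryagin} by the same computation as in $(2)\Rightarrow(1)$. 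So really the heart is the equivalence $(1)\Leftrightarrow(2)$, and $(3)$ will be bolted on at the end.

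For $(1)\Rightarrow(2)$ and $(2)\Rightarrow(1)$ simultaneously, the strategy is to compute $\dee C^s$ explicitly using the Chevalley--Eilenberg formula for the Lie algebroid differential of $\Abd^s = \GG\oplus F$ with bracket given by Proposition~\ref{Prop:StandardQuadraticLieAlgebroidStructure}. I would evaluate $(\dee C^s)(w_0,w_1,w_2,w_3)$ on four sections $w_i = \gr_i + \fx_i$ and expand. The terms split according to how many arguments lie in $\GG$ versus $F$. The purely-$\GG$ part ($w_i = \gr_i$) vanishes because $-\ipG{\lbG{\gr}{\gs}}{\gt}$ is the Cartan $3$-form of the quadratic Lie algebra bundle, whose differential is zero by ad-invariance (this uses~\eqref{ipGinvariant} and~\eqref{lbGinvariant} fiberwise, i.e.\ the Jacobi identity plus invariance). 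The mixed terms with one or two $\GG$-arguments vanish thanks to the compatibility conditions~\eqref{lbGinvariant}, \eqref{dCurvautre0} and~\eqref{CurvatureConnectionDer}: these are exactly the Bianchi-type identities that make $\nabla$, $\Curvature$ fit together, and they conspire to kill the corresponding components of $\dee C^s$. The only surviving piece is the purely-$F$ part ($w_i = \fx_i$), which by a direct expansion equals $(\dF\HForm)(\fx_0,\fx_1,\fx_2,\fx_3) - \PontryaginCocycleR(\fx_0,\fx_1,\fx_2,\fx_3)$ — the $\dF\HForm$ coming from the $\HForm(\fx,\fy,\fz)$ summand and the $\langle R\wedge R\rangle$ coming from the cross-terms between $\HForm$ and the three $\ipG{\Curvature(\cdot,\cdot)}{\cdot}$ summands, after using~\eqref{dCurvautre0} to cancel the remaining $\nabla\Curvature$ contributions. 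Hence $\dee C^s = 0$ if and only if $\dF\HForm = \PontryaginCocycleR$, which is precisely~\eqref{dMHPontryagin}; this gives both $(1)\Rightarrow(2)$ and $(2)\Rightarrow(1)$.

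Finally, for $(2)\Rightarrow(3)$: suppose $C^s$ is closed. Take the hoist $\Connection:F\to\Abd^s=\GG\oplus F$, $\Connection(x)=(0,x)$; then by the bracket formulas $\Curvature^{\Connection} = \Curvature$ and $\ConnectionDer^{\Connection} = \nabla$. Reading off~\eqref{labelrouge}: setting all $\fx_i=0$ gives $C^s(\gr,\gs,\gt) = -\ipG{\lbG{\gr}{\gs}}{\gt}$, which is~\eqref{Align:GoodformsCdt1}; setting $\fz=0$ and one of $\fx,\fy$ nonzero gives $C^s(\gr,\gs,\Connection(x)) = 0$, which is~\eqref{Align:GoodformsCdt2} (the $\ipG{\Curvature(\fx,\fy)}{\gt}$ term and the $\Curvature(\cdot,\fz)$ terms all vanish because two arguments are in $\GG$ and the $\HForm$ term needs three $F$-arguments); and keeping $\gr$ with $\fy,\fz$ nonzero gives $C^s(\gr,\Connection(y),\Connection(z)) = \ipG{\Curvature(\fy,\fz)}{\gr} = \ipG{\gr}{\Curvature^{\Connection}(y,z)}$, which is~\eqref{Align:GoodformsCdt3}. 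So $C^s$ is $\Connection$-coherent, giving $(3)$.

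I expect the main obstacle to be the bookkeeping in the $(1)\Leftrightarrow(2)$ step: the four-argument Chevalley--Eilenberg expansion of $\dee C^s$ produces a large number of terms, and one must carefully track signs and the mixed $\GG$/$F$ components, invoking the right compatibility identity from Proposition~\ref{Prop:CourantbracketIntermsofData3} for each vanishing component. The conceptual content is routine — it is the standard fact that the "standard" $3$-form on an extension is closed iff the transgression of the Pontryagin form is realized by $\HForm$ — but making the cancellations explicit requires care. In particular one should be slightly careful that~\eqref{labelrouge} is genuinely skew-symmetric (this is why the three cyclic $\Curvature$-terms appear rather than one), and that the normalization of $\PontryaginCocycleR$ with its $\tfrac14\sum_{S_4}$ matches the coefficients coming out of the expansion.
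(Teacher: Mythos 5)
Your proposal is correct and follows essentially the same route as the paper: expand $\dee C^s$ componentwise according to how many arguments lie in $\GG$ versus $F$, observe that all components except the purely-$F$ one vanish automatically by the compatibility identities \eqref{ipGinvariant}--\eqref{CurvatureConnectionDer}, identify the purely-$F$ component with $\dF\HForm-\PontryaginCocycleR$, and read the equivalence of closedness with coherence directly off Definition~\ref{Defn:good} and formula~\eqref{labelrouge} using the canonical hoist $\Connection(x)=0+x$. No substantive differences.
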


\begin{proof}
Straightforward calculations lead to
\begin{align}
\label{align:1} (\dample C^s)(x,y,z,w) =& (\dF \HForm)(x,y,z,w) - \PontryaginCocycleR(x,y,z,w), \\
\label{align:2} (\dample C^s)(x,y,z,\gr) =& \ipG{\big(\ConnectionDer_x\Curvature(y,z) - \Curvature(\lb{x}{y},z)\big) + \CP}{\gr}, \\
\label{align:3} (\dample C^s)(x,y,\gr,\gs) =& 0, \\
\label{align:4} (\dample C^s)(x,\gr,\gs,\gt) =& -\LieDer_x\ipG{\lbG{\gr}{\gs}}{\gt} + \ipG{\lbG{\ConnectionDer_x\gr}{\gs} + \lbG{\gr}{\ConnectionDer_x\gs}}{\gt} + \ipG{\lbG{\gr}{\gs}}{\ConnectionDer_x\gt}, \\
\label{align:5} (\dample C^s)(\gq,\gr,\gs,\gt) =& 0,
\end{align}
for all $x,y,z,w\in\sections{F}$ and $\gq,\gr,\gs,\gt\in\sections{\GG}$.

The right hand side of Eqs.~\eqref{align:2} and~\eqref{align:4} vanish due to Eqs.~\eqref{ipGinvariant}-\eqref{CurvatureConnectionDer}. Thus $\dsmile C^s=0$ if and only if the right hand side of Eq.~\eqref{align:1} vanishes.
The latter is equivalent to Eq.~\eqref{dMHPontryagin}.
This proves the equivalence between the first two statements.
The equivalence between the last two statements follows from Definition~\ref{Defn:good} and Eq.~\eqref{labelrouge}.
\end{proof}

Let $E^s$ be the standard Courant algebroid determined by the quintuple $(F,\GG;\ConnectionDer,\Curvature,\HForm)$.
Clearly its associated ample quadratic Lie algebroid is the standard quadratic Lie algebroid $\Abd^s$ determined by the quadruple
$(F,\GG;\ConnectionDer,\Curvature)$.

It follows from Proposition~\ref{Lem:Csclosediff} and the
compatibility conditions satisfied by
$\ConnectionDer,\Curvature,\HForm$ that the form $C^s$
defined by formula~\eqref{labelrouge} is a closed --- and thus coherent --- $3$-form
on $\Abd^s$. Since the cocycles of the Lie algebroid $\Abd^s$ can be
identified to the naive cocycles on $E^s$, we obtain a naive
$3$-cocycle on $E^s$ called the \textbf{standard $3$-form} on $E^s$.

At this point, Proposition
\ref{Prop:PontryaginCoherentExtensionAllEquivalent} and the
following result  are quite obvious.

\begin{prop}
\label{pro:3formclass} Let $E^s$ be the standard Courant algebroid
determined by the quintuple
$(F,\GG;\ConnectionDer,\Curvature,\HForm)$, and $C^s$ the $3$-form
on $\Abd^s$ defined by~\eqref{labelrouge}. Then the
characteristic class of $E^s$ is equal to $[C^s]\in H^3 (\Abd^s)$.
\end{prop}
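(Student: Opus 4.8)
The plan is to compute the characteristic class of $E^s$ straight from its definition, using the most economical dissection at hand --- the tautological one --- and to recognise the resulting naive $3$-cocycle as the form $C^s$ of~\eqref{labelrouge}.

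Since $E^s$ is by construction the standard Courant algebroid on $\fsgf$, the identity map is a dissection $\dissection$ of $E^s$, and by Theorem~\ref{Thm:StandardCourantStructure} the data~\eqref{Eqt:inducedConnectionDer}--\eqref{Eqt:inducedHForm} it induces coincide with the defining quintuple $(F,\GG;\ConnectionDer,\Curvature,\HForm)$. Fix any torsion-free connection $\Fconnection$ on $F$ and let $\Econnection$ be the $E$-connection~\eqref{Eqn:Econnectionconcstructed}. By Theorem~\ref{Thm:main1} the characteristic class of $E^s$ is the class of the naive $3$-cocycle $C_{\dissection}=C_{\Econnection}$ of~\eqref{AlexXu3form}; moreover, by Theorem~\ref{Thm:main1}(2) together with Remark~\ref{Rmk:NaiveIdFormsOnFat}, $C_{\Econnection}$ is the pullback along $q\colon E^s\to\Abd^s$ of a unique Lie algebroid $3$-cocycle $\bar C$ on $\ample_{E^s}=\Abd^s$, and the characteristic class of $E^s$ is $[\bar C]\in H^3(\Abd^s)$. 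Everything thus reduces to the identity $\bar C=C^s$ (note that $C^s$ is already known to be a coherent $3$-cocycle on $\Abd^s$ by Proposition~\ref{Lem:Csclosediff}, so no further checking is needed on the target side).

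To obtain this identity I would evaluate $\bar C(v_1,v_2,v_3)=C_{\Econnection}(v_1,v_2,v_3)$ on sections of the form $v_i=\gr_i+\fx_i\in\GG\oplus F\subset\fsgf$ (with vanishing $\Fs$-component), using~\eqref{AlexXu3form}, the skew-symmetric part of the Dorfman bracket~\eqref{FSGFx1x2}--\eqref{FSGFx1g2}, the explicit formula~\eqref{Eqn:Econnectionconcstructed} for $\Econnection$, and the standard pseudo-metric~\eqref{Eqt:Standardip}. As that pseudo-metric pairs $\GG$ only with $\GG$ and $F$ only with $\Fs$, most contributions vanish outright, and the surviving $\PForm$- and $\mathcal{Q}$-terms --- which are paired against the $F$-entry of the third slot --- are rewritten through~\eqref{Eqt:definationofPForm} and~\eqref{Eqt:definationofQ} in terms of $\ConnectionDer$ and $\Curvature$. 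One then treats the four cases $(\gr\gr\gr)$, $(\gr\gr\fx)$, $(\gr\fx\fx)$, $(\fx\fx\fx)$ separately: in the first, the $\tfrac23\lbG{\qm}{\qm}$ term of $\Econnection$ and the $\lbG{\qm}{\qm}$ term of the Courant bracket combine, weighted by the $\tfrac13$ and $-\thalf$ of~\eqref{AlexXu3form}, to $-\ipG{\lbG{\gr_1}{\gr_2}}{\gr_3}$; the second vanishes after one application of the invariance~\eqref{ipGinvariant}; the third produces $\ipG{\gr_1}{\Curvature(\fx_2,\fx_3)}$, i.e.\ the cyclic sum of the $\Curvature$-terms of~\eqref{labelrouge}; and in the fourth the $\Fconnection$-dependent parts of $\Econnection$ and of the Courant bracket cancel because $\Fconnection$ is torsion-free, leaving $\HForm(\fx_1,\fx_2,\fx_3)$. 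Comparing term by term with~\eqref{labelrouge} gives $\bar C=C^s$, hence the characteristic class of $E^s$ equals $[C^s]\in H^3(\Abd^s)$.

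The argument is therefore purely a matter of bookkeeping. The two points deserving care are the cancellation of the auxiliary torsion-free connection in the $(\fx\fx\fx)$ entry --- abstractly guaranteed by Theorem~\ref{Thm:main1}(1), but one wants to see it explicitly --- and the tracking of the rational coefficients, so that $\lbG{\qm}{\qm}$, $\Curvature$ and $\HForm$ emerge with exactly the normalisations appearing in~\eqref{labelrouge}. No conceptual difficulty arises beyond this.
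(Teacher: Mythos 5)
Your proposal is correct and follows essentially the same route as the paper: the paper establishes the identity $C_{\Econnection}=C^s$ for the tautological dissection by the very computation you describe (Eq.~\eqref{Eqn:CstandardonE} in the proof of Theorem~\ref{Thm:main1}), and then regards Proposition~\ref{pro:3formclass} as an immediate consequence. Your case-by-case bookkeeping, including the cancellation of the auxiliary torsion-free connection and the coefficient count for the $\lbG{\qm}{\qm}$ term, matches the paper's argument.
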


\subsection{Isomorphic standard Courant algebroid structures on $\fsgf$}

\begin{prop}\label{GeneralIsomorphism}
Given two different standard Courant algebroid structures $E^s_1$ and $E^s_2$ on $\fsgf$, any isomorphism of Courant algebroids
$\ISO:E^s_1\to E^s_2$ is of the form
\begin{equation}\label{align:ExpressionofI}
\ISO(\xi+\gr+\fx) = \big(\xi+\beta(\fx)-2\varphis\tau(\gr)\big) + \big(\tau(\gr)+\varphi(\fx)\big) + \fx
,\end{equation}
for $\xi\in\Fs$, $\gr\in\GG$ and $\fx\in F$.
Here $\tau$ is an automorphism of the bundle of quadratic Lie algebras $\GG$  and
$\varphi:F\to\GG$ and $\beta:F\to\Fs$ are bundle maps satisfying the following compatibility conditions:
\begin{gather}
\label{Eqn:iso1} \thalf\duality{\beta(\fx)}{\fy}+\thalf\duality{\fx}{\beta(\fy)}+\ipG{\varphi(\fx)}{\varphi(\fy)}=0 ,\\
\label{Eqn:iso2} \ConnectionDer^2_{\fx}\tau(\gr)-\tau\ConnectionDer^1_{\fx}(\gr)=\lbG{\tau(\gr)}{\varphi(\fx)} ,\\
\label{Eqn:iso3}
\Curvature^2(\fx,\fy)-\tau\Curvature^1(\fx,\fy)
= \tau(\ConnectionDer^1_{\fy}\tauinverse\varphi(\fx)
-\ConnectionDer^1_{\fx}\tauinverse\varphi(\fy))
+\varphi\lb{\fx}{\fy}+\lbG{\varphi(\fx)}{\varphi(\fy)}, \\
\label{Eqn:iso4}
\begin{split}
&\quad \HForm^2(\fx,\fy,\fz)-\HForm^1(\fx,\fy,\fz) +2\ipG{\varphi(\fx)}{\lbG{\varphi(\fy)}{\varphi(\fz)}} \\
& = \big(2\ipG{\varphi(\fx)}{\tau\ConnectionDer^1_{\fy}\tauinverse\varphi(\fz)
-\tau\Curvature^1(\fy,\fz)}
+\LieDer_{\fx}\duality{\fy}{\beta(\fz)}+\duality{\fx}{\beta\lb{\fy}{\fz}}\big)+\CP,
\end{split}
\end{gather}
for all $\fx,\fy,\fz\in\sections{F}$ and $\gr\in\sections{\GG}$.
\end{prop}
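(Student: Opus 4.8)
The plan is to analyze an arbitrary Courant algebroid isomorphism $\ISO:E^s_1\to E^s_2$ by exploiting the three pieces of structure that $\ISO$ must preserve: the anchor, the pseudo-metric, and the Dorfman bracket. First I would pin down the shape~\eqref{align:ExpressionofI}. Since both standard structures have anchor~\eqref{Eqt:Standardrho}, the condition $\rho_2\circ\ISO=\rho_1$ forces the $F$-component of $\ISO(\xi+\gr+\fx)$ to be exactly $\fx$, and moreover $\ISO$ must send $\ker\rho_1=\Fs\oplus\GG$ to $\ker\rho_2=\Fs\oplus\GG$. Next, $\ISO$ must preserve $(\ker\rho)\ortho$ — which in both cases is $\Fs$ — hence $\ISO|_{\Fs}$ is an automorphism of $\Fs$ covering the identity on $F$; a quick check using~\eqref{FSGFx1xi2}--\eqref{FSGFxi1x2} (the $\Fs$-part of the bracket is the same Lie-derivative formula in $E^s_1$ and $E^s_2$) shows this automorphism is the identity, so $\ISO(\xi)=\xi$. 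Therefore $\ISO$ is block lower-triangular with identity on the $\Fs$ and $F$ diagonal blocks, and is determined by: a bundle map $\tau:\GG\to\GG$ together with a correction $\GG\to\Fs$ (the off-diagonal entry above $\GG$), a bundle map $\varphi:F\to\GG$, and a bundle map $\beta:F\to\Fs$. Imposing that $\ISO$ preserves the pseudo-metric~\eqref{Eqt:Standardip} gives three conditions: on $\ip{\gr}{\gs}$ it says $\ipG{\tau\gr}{\tau\gs}=\ipG{\gr}{\gs}$, i.e.\ $\tau$ is metric-preserving; on $\ip{\fx}{\gr}$ it forces the $\GG\to\Fs$ entry to be $-2\varphis\tau$ (here $\varphis:\GG\to\Fs$ is the map dual to $\varphi$ via the metric on $\GG$ and the pairing $\Fs\otimes F\to\cinf M$), which is exactly the cross-term in~\eqref{align:ExpressionofI}; and on $\ip{\fx}{\fy}$ it gives precisely~\eqref{Eqn:iso1}. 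That $\tau$ is additionally a Lie algebra automorphism of $\GG$ will come out of the bracket analysis below (the $\db{\gr_1}{\gr_2}$ relation), together with the fact that the $\GG$-component of~\eqref{FSGFg1g2} is $\lbG{\gr_1}{\gr_2}$ in both structures.

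The remaining work is to feed~\eqref{align:ExpressionofI} into the intertwining identity
$\ISO(\db{e_1}{e_2}_1)=\db{\ISO e_1}{\ISO e_2}_2$
and read off the constraints on $(\tau,\varphi,\beta)$ by testing it on the generators $\fx,\gr\in\sections{F^*\oplus\GG\oplus F}$. Using Lemma~\ref{Prop:CourantbracketIntermsofData2} to expand every bracket in terms of $\ConnectionDer^i,\Curvature^i,\HForm^i$ (and $\PForm^i,\mathcal{Q}^i$, which are themselves determined by $\ConnectionDer^i,\Curvature^i$), the bracket $\db{x}{\gr}$ yields the $\GG$-component identity~\eqref{Eqn:iso2} (after using that $\ISO$ is metric-preserving, the $\Fs$-components match automatically); the bracket $\db{x}{y}$ yields, in its $\GG$-component, identity~\eqref{Eqn:iso3}, and in its $\Fs$-component — after a somewhat longer computation in which the $\PForm^2$ and $\mathcal{Q}^2$ terms are rewritten back in terms of $\ConnectionDer^2,\Curvature^2$ and then in terms of $\ConnectionDer^1,\Curvature^1,\tau,\varphi$ using~\eqref{Eqn:iso2}--\eqref{Eqn:iso3} — identity~\eqref{Eqn:iso4}. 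The bracket $\db{\gr_1}{\gr_2}$ gives, on the $\GG$-side, that $\tau$ respects $\lbG{\qm}{\qm}$, and on the $\Fs$-side an identity that is automatically satisfied once $\tau$ is a metric Lie algebra automorphism and~\eqref{Eqn:iso2} holds (it expresses $\PForm^2$ in terms of $\PForm^1$). Conversely, one checks directly that any triple $(\tau,\varphi,\beta)$ with $\tau$ an automorphism of the bundle of quadratic Lie algebras and~\eqref{Eqn:iso1}--\eqref{Eqn:iso4} satisfied makes~\eqref{align:ExpressionofI} an isomorphism of Courant algebroids — this is the same computation run backwards, and no new condition appears.

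The main obstacle is bookkeeping in the $\Fs$-component of the $\db{x}{y}$ relation leading to~\eqref{Eqn:iso4}: this is where $\HForm^1$ and $\HForm^2$ enter, together with the cubic term $\ipG{\varphi(\fx)}{\lbG{\varphi(\fy)}{\varphi(\fz)}}$ coming from the interaction of the $-2\varphis\tau$ correction with the $\GG$-bracket, and with the $\beta$-dependent Lie-derivative terms coming from~\eqref{FSGFx1xi2}. One has to be careful that $\ISO$ applied to the $\Fs$-part of $\db{x}{y}_1$ produces not only $\HForm^1(\fx,\fy,\qm)$ but also a $-2\varphis\tau\Curvature^1(\fx,\fy)$ term (from $\ISO$ acting on the $\GG$-component $\Curvature^1(\fx,\fy)$), while on the other side $\db{\ISO x}{\ISO y}_2$ contains $\HForm^2$, $\mathcal{Q}^2$-type contributions, and $\ConnectionDer^2$-twisted $\varphi$-terms; collecting these and symmetrizing (the $+\CP$) is exactly~\eqref{Eqn:iso4}. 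I expect everything else — the triangular form, the metric constraints, and~\eqref{Eqn:iso2}--\eqref{Eqn:iso3} — to be routine once Lemma~\ref{Prop:CourantbracketIntermsofData2} is in hand. I would organize the proof as: (i) triangular form from anchor $+$ preservation of $(\ker\rho)\ortho$ and $\ker\rho$; (ii) metric constraints, giving the cross-term and~\eqref{Eqn:iso1}; (iii) the four bracket computations, in the order $\db{\gr_1}{\gr_2}$ (to get $\tau$ an automorphism), $\db{x}{\gr}$ ($\Rightarrow$~\eqref{Eqn:iso2}), $\db{x}{y}$ on $\GG$ ($\Rightarrow$~\eqref{Eqn:iso3}), $\db{x}{y}$ on $\Fs$ ($\Rightarrow$~\eqref{Eqn:iso4}); (iv) the converse, noting no further conditions arise.
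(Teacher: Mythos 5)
Your proposal is correct and follows essentially the same route as the paper: read off the block-triangular form of $\ISO$ from anchor and pseudo-metric preservation (giving the $-2\varphis\tau$ cross-term and~\eqref{Eqn:iso1}), then extract~\eqref{Eqn:iso2}--\eqref{Eqn:iso4} by intertwining the Dorfman brackets via Lemma~\ref{Prop:CourantbracketIntermsofData2}. One small caveat: for $\ISO\vert_{\Fs}=\id$ the paper uses equivariance of $\DD$ (axiom~\eqref{algian:CAaxioms4}), whereas your bracket check only shows that $\ISO\vert_{\Fs}$ commutes with Lie derivatives along $F$ (which does not by itself force it to be the identity); the quickest fix is to use the metric pairing $\ip{\ISO\xi}{\ISO\fx}=\ip{\xi}{\fx}=\thalf\duality{\xi}{\fx}$ together with the already-established fact that the $F$-component of $\ISO(\fx)$ is $\fx$, a computation you already have at hand in your step (ii).
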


\begin{proof}
Consider the bundle maps
\begin{align*}
& \tau:\quad \GG \injection E^s_1 \xto{\ISO} E^s_2 \surjection \GG ;\\
& \varphi:\quad F \injection E^s_1 \xto{\ISO} E^s_2 \surjection \GG ;\\
& \beta:\quad F \injection E^s_1 \xto{\ISO} E^s_2 \surjection F^* ;\\
& \gamma:\quad \GG \injection E^s_1 \xto{\ISO} E^s_2 \surjection F^* ,
\end{align*}
where $E^s_i$ stands for $\fsgf$. It is clear that
$\ISO(\fx)=\beta(\fx)+\varphi(\fx)+\fx$, for all $\fx\in F$. Since
$\ISO$ respects the pseudo-metric, we have
$\ip{\ISO(\fx)}{\ISO(\fy)}=0$, for all $\fx,\fy\in F$ and
Eq.~\eqref{Eqn:iso1} follows. Since
$\ISO(\gr)=\gamma(\gr)+\tau(\gr)$ for all $\gr\in\GG$ and
$\ip{\ISO(\gr)}{\ISO(\fx)}=\ip{\gr}{\fx}=0$, we have
$\thalf\duality{\gamma(\gr)}{\fx}+\ipG{\tau(\gr)}{\varphi(\fx)}=0$.
Hence $\gamma=-2\varphis\tau$. From Eq.~\eqref{algian:CAaxioms4}, it
is easy to get that $\ISO(\xi)=\xi$, for all $\xi\in F^*$. Thus
$\ISO$ must be of the form~\eqref{align:ExpressionofI}. Since $\ISO$
takes the Dorfman bracket of $E^s_1$ to $E^s_2$,
Eqs.~\eqref{Eqn:iso2}-\eqref{Eqn:iso4} follow from
Lemma~\ref{Prop:CourantbracketIntermsofData2}.
\end{proof}

The following two technical lemmas follow from direct verifications.

\begin{lem}\label{Lem:dPhiJJ}
Given a bundle map $J:F\to\GG$, define
$\Phi_J\in\subcomplex^2(\Abd^s )$ by
\begin{equation}\label{Eqt:PhiJJ}
\Phi_J(\gr+\fx,\gs+\fy)
=\ipG{\gr}{J(\fy)}-\ipG{\gs}{J(\fx)} ,
\end{equation}
for all $\fx,\fy\in F$ and $\gr,\gs\in\GG$.
Then the differential of $\Phi_J$ is given by:
\begin{multline*}
(\dee\Phi_J)(\gr_1+\fx_1,\gr_2+\fx_2,\gr_3+\fx_3) = \ipG{\gr_1}
{\ConnectionDer_{\fx_3}J(\fx_2)-\ConnectionDer_{\fx_2}J(\fx_3)+J\lb{\fx_2}{\fx_3}} \\
-\ipG{J(\fx_1)}{\Curvature(\fx_2,\fx_3)+\lbG{\gr_2}{\gr_3}}+\CP ,
\end{multline*}
for all $\fx_1,\fx_2,\fx_3\in\sections{F}$ and $\gr_1,\gr_2,\gr_3\in\sections{\GG}$.
\end{lem}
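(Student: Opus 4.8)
The statement concerns the Chevalley--Eilenberg differential $\dee$ of the standard quadratic Lie algebroid $\Abd^s=\GG\oplus F$ of Proposition~\ref{Prop:StandardQuadraticLieAlgebroidStructure}, and it is a direct computation. First one checks that $\Phi_J$ really does lie in $\subcomplex^2(\Abd^s)$: it is manifestly $\cinf{M}$-bilinear, it is skew-symmetric by the symmetry of $\ipG{\qm}{\qm}$, and it is annihilated by every section of $\wedge^2\GG$ because $J$ kills the $\GG$-summand. So only the displayed formula for $\dee\Phi_J$ needs to be established.

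The plan is to expand, for $X_i=\gr_i+\fx_i$,
\begin{multline*}
(\dee\Phi_J)(X_1,X_2,X_3) = \anchor(X_1)\Phi_J(X_2,X_3) - \anchor(X_2)\Phi_J(X_1,X_3) + \anchor(X_3)\Phi_J(X_1,X_2) \\
- \Phi_J(\algebroidlb{X_1}{X_2},X_3) + \Phi_J(\algebroidlb{X_1}{X_3},X_2) - \Phi_J(\algebroidlb{X_2}{X_3},X_1),
\end{multline*}
using the bracket of $\Abd^s$ from Proposition~\ref{Prop:StandardQuadraticLieAlgebroidStructure},
\[ \algebroidlb{\gr_i+\fx_i}{\gr_j+\fx_j} = \big(\lbG{\gr_i}{\gr_j}+\ConnectionDer_{\fx_i}\gr_j-\ConnectionDer_{\fx_j}\gr_i+\Curvature(\fx_i,\fx_j)\big)+\lb{\fx_i}{\fx_j}, \]
the definition of $\Phi_J$, and the metric-compatibility identity~\eqref{ipGinvariant} to rewrite each anchor term, e.g.
\[ \anchor(X_1)\Phi_J(X_2,X_3) = \ipG{\ConnectionDer_{\fx_1}\gr_2}{J\fx_3}+\ipG{\gr_2}{\ConnectionDer_{\fx_1}J\fx_3}-\ipG{\ConnectionDer_{\fx_1}\gr_3}{J\fx_2}-\ipG{\gr_3}{\ConnectionDer_{\fx_1}J\fx_2}. \]

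The key point is that the six terms of type $\pm\ipG{\ConnectionDer_{\fx_i}\gr_j}{J\fx_k}$ produced by the three anchor terms are cancelled exactly by the six matching terms coming from the $\ConnectionDer_{\fx_i}\gr_j-\ConnectionDer_{\fx_j}\gr_i$ pieces of $\algebroidlb{X_i}{X_j}$ inside the three bracket terms; identity~\eqref{ipGinvariant} is needed for nothing else. After this cancellation, grouping the surviving terms by the summand that carries no instance of $J$ leaves exactly
\[ \ipG{\gr_1}{\ConnectionDer_{\fx_3}J(\fx_2)-\ConnectionDer_{\fx_2}J(\fx_3)+J\lb{\fx_2}{\fx_3}}-\ipG{J(\fx_1)}{\Curvature(\fx_2,\fx_3)+\lbG{\gr_2}{\gr_3}} \]
together with its two cyclic permutations, which is the asserted identity. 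The only real obstacle is bookkeeping: keeping the signs straight, noting that each $\ConnectionDer_{\fx_i}\gr_j$ term occurs twice with opposite signs, and verifying that nothing beyond the listed terms survives. There is no conceptual difficulty.
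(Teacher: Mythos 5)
Your computation is correct and is precisely the ``direct verification'' the paper invokes without writing out (the paper offers no further proof of this lemma). I checked the sign bookkeeping: the six $\ipG{\ConnectionDer_{\fx_i}\gr_j}{J\fx_k}$ terms from the anchor terms do cancel against the $\ConnectionDer$-pieces of the brackets, and the survivors assemble into the stated cyclic expression, so nothing is missing.
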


\begin{lem}\label{Lem:dPhiJJbis}
Given a bundle map $K:F\to\Fs$, define
$\Psi_K\in\subcomplex^2(\Abd^s )$ by
\begin{equation}
\Psi_K(\gr+\fx,\gs+\fy)
=\duality{\fx}{K(\fy)}-\duality{\fy}{K(\fx)},
\end{equation}
for all $\fx,\fy\in F$ and $\gr,\gs\in\GG$.
Then the differential of $\Psi_K$ is given by:
\begin{equation*}
\dee\Psi_{K}(\gr+x,\gs+y,\gt+z)
=\big(\duality{x}{\LieDer_{z}K(y)-\LieDer_{y}K(z)+K(\lb{y}{z})}
+\duality{\lb{x}{y}}{K(z)}\big)+\CP ,
\end{equation*}
for all $\fx_1,\fx_2,\fx_3\in\sections{F}$ and $\gr_1,\gr_2,\gr_3\in\sections{\GG}$.
\end{lem}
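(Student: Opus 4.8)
The plan is a direct computation, made transparent by the observation that $\Psi_K$ is pulled back from the base foliation, so that the Chevalley--Eilenberg differential of $\Abd^s$ reduces to the leafwise de~Rham differential on $F$.

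First note that $\Psi_K$ only involves the $F$-components of its arguments: if $\ProjectionTo{F}\colon\Abd^s=\GG\oplus F\to F$ is the projection and $\omega_K\in\sections{\wedge^2\Fs}$ denotes the $2$-form $\omega_K(x,y)=\duality{x}{K(y)}-\duality{y}{K(x)}$, then $\Psi_K=\ProjectionTo{F}^*\omega_K$. In particular $\Psi_K$ vanishes on $\wedge^2\GG$, which is why $\Psi_K\in\subcomplex^2(\Abd^s)$. Moreover, since $\anchor(\gr+x)=x$ and, by Proposition~\ref{Prop:StandardQuadraticLieAlgebroidStructure}, the $F$-component of $\algebroidlb{\gr+x}{\gs+y}$ is $\lb{x}{y}$, the map $\ProjectionTo{F}$ is a base-preserving morphism of Lie algebroids from $\Abd^s$ onto the integrable subbundle $F\subset TM$. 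Hence $\ProjectionTo{F}^*$ is a cochain map, so $\dee\Psi_K=\ProjectionTo{F}^*(\dF\omega_K)$, and it is enough to compute the leafwise de~Rham differential $\dF\omega_K$ on sections $x,y,z\in\sections{F}$.

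Applying the Koszul formula,
\[
\dF\omega_K(x,y,z)=x\,\omega_K(y,z)-y\,\omega_K(x,z)+z\,\omega_K(x,y)
-\omega_K(\lb{x}{y},z)+\omega_K(\lb{x}{z},y)-\omega_K(\lb{y}{z},x),
\]
I would substitute the definition of $\omega_K$ and use $x\duality{y}{\eta}=\duality{\lb{x}{y}}{\eta}+\duality{y}{\ld{x}\eta}$ for $\eta\in\sections{\Fs}$ to transfer every derivative onto $K$. The resulting terms fall into three types, of the shapes $\duality{a}{\ld{b}K(c)}$, $\duality{a}{K(\lb{b}{c})}$, and $\duality{\lb{a}{b}}{K(c)}$; after the obvious cancellations each type reorganizes itself into a single cyclic sum over $x,y,z$, and one obtains
\[
\dF\omega_K(x,y,z)=\big(\duality{x}{\ld{z}K(y)-\ld{y}K(z)+K(\lb{y}{z})}+\duality{\lb{x}{y}}{K(z)}\big)+\CP.
\]
Pulling back along $\ProjectionTo{F}^*$ — which merely relabels $x,y,z$ as $\gr+x,\gs+y,\gt+z$, the $\GG$-parts playing no role — yields the claimed formula.

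There is no genuine difficulty here; the only place that needs attention is the sign bookkeeping in the last step, namely checking that the six $\duality{\lb{\cdot}{\cdot}}{K(\cdot)}$ terms produced by the derivative terms do not fully cancel against the three $\omega_K(\lb{\cdot}{\cdot},\cdot)$ terms but leave exactly one cyclic sum $\duality{\lb{x}{y}}{K(z)}+\CP$. Equivalently, one can bypass the remark about $\ProjectionTo{F}$ and expand $\dee\Psi_K$ with the Chevalley--Eilenberg formula for $\Abd^s$ directly: the $\GG$-components of the bracket and the $\GG$-parts of the arguments never contribute, since $\Psi_K$ pairs only with $F$-components and $\anchor(\gr+x)=x$, so the calculation is word for word the same. (The companion Lemma~\ref{Lem:dPhiJJ} is handled by the same computation, except that there the $\GG$-component $\Curvature(\fx_2,\fx_3)+\lbG{\gr_2}{\gr_3}$ of the bracket does enter.)
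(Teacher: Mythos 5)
Your proposal is correct, and it is precisely the ``direct verification'' that the paper invokes without writing out: the observation that $\Psi_K=\anchor^*\omega_K$ for a leafwise $2$-form $\omega_K$ on $F$, so that $\dee\Psi_K=\anchor^*(\dF\omega_K)$ reduces everything to the Koszul formula on $F$, is a clean way to organize that computation, and the sign bookkeeping you flag does work out to the stated cyclic sum. Nothing further is needed.
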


\begin{prop}\label{Lem:IPullBackDiffCoboundary}
The isomorphism $\ISO:E^s_1\to E^s_2$ of Proposition~\ref{GeneralIsomorphism} yields an isomorphism $\iso:\Abd^s_1\to\Abd^s_2$ between the associated ample Lie algebroid structures on $\GG\oplus F$:
\[ \iso(\gr+\fx)= (\tau(\gr)+\varphi(\fx))+\fx, \qquad\forall\;\gr\in\GG,\;\fx\in F .\]
If $C^s_1$ and $C^s_2$ denote the standard \good\ $3$-forms (defined in Section~\ref{Sec:standard3formAbdsEs}) on $\Abd^s_1$ and $\Abd^s_2$, respectively,
then $\iso^* C^s_2-C^s_1 = \dee(\thalf\Psi_{\beta} + \Phi_{\tau\inv\varphi})$.
\end{prop}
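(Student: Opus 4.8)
The plan is to verify the claimed identity $\iso^* C^s_2 - C^s_1 = \dee\bigl(\tfrac{1}{2}\Psi_{\beta} + \Phi_{\tau\inv\varphi}\bigr)$ by a direct computation, evaluating both sides on an arbitrary triple of sections of $\GG \oplus F$ of the form $\gr_i + \fx_i$, $i=1,2,3$, and comparing. First I would compute the left-hand side: using the explicit formula~\eqref{labelrouge} for $C^s_2$ and the definition of $\iso$ from the statement, $\iso^* C^s_2(\gr_1+\fx_1,\gr_2+\fx_2,\gr_3+\fx_3)$ expands into terms involving $\HForm^2(\fx_1,\fx_2,\fx_3)$, the bracket term $-\ipG{\lbG{\tau\gr_1 + \varphi\fx_1}{\tau\gr_2+\varphi\fx_2}}{\tau\gr_3+\varphi\fx_3}$, and three curvature terms $\ipG{\Curvature^2(\fx_1,\fx_2)}{\tau\gr_3+\varphi\fx_3}$ and cyclic permutations. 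Subtracting $C^s_1$ (again from~\eqref{labelrouge}, with the superscript $1$ data) gives a difference expressed in terms of $\HForm^2 - \HForm^1$, $\Curvature^2 - \tau\Curvature^1$-type quantities, and brackets of $\varphi$ and $\tau$.

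The key input is then to eliminate $\HForm^2 - \HForm^1$ and $\Curvature^2 - \tau\Curvature^1$ in favour of expressions involving only $\ConnectionDer^1$, $\tau$, $\varphi$, $\beta$ by invoking the compatibility conditions~\eqref{Eqn:iso2}, \eqref{Eqn:iso3} and~\eqref{Eqn:iso4} from Proposition~\ref{GeneralIsomorphism} (and also~\eqref{Eqn:iso1} and the $ad$-invariance of $\ipG{\qm}{\qm}$, plus the fact that $\tau$ is a Lie algebra bundle automorphism so it is an isometry of $\ipG{\qm}{\qm}$). In particular, Eq.~\eqref{Eqn:iso4} directly supplies the $\HForm^2 - \HForm^1$ substitution, Eq.~\eqref{Eqn:iso3} handles the curvature difference appearing in the terms of type $\ipG{\Curvature^2(\fx,\fy)}{\tau\gr}$, and Eq.~\eqref{Eqn:iso2} is used to rewrite the mixed brackets $\lbG{\tau\gr}{\varphi\fx}$ that show up when one expands the bracket term of $\iso^*C^s_2$. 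After these substitutions, the left-hand side becomes a (lengthy) polynomial expression in $\ConnectionDer^1$, $\Curvature^1$, $\tau$, $\tauinverse$, $\varphi$, $\beta$ and the Lie bracket on $F$.

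On the right-hand side, I would apply Lemma~\ref{Lem:dPhiJJbis} with $K = \beta$ to compute $\dee\Psi_{\beta}$, and Lemma~\ref{Lem:dPhiJJ} with $J = \tau\inv\varphi$ to compute $\dee\Phi_{\tau\inv\varphi}$ (here one must be slightly careful: these lemmas are stated for $\Abd^s$ with a single connection/curvature, so I would apply them with the structure data $\ConnectionDer^1, \Curvature^1$ of $\Abd^s_1$, since $\dee$ on the left is the differential of $\Abd^s_1$ and $\Psi_\beta, \Phi_{\tau\inv\varphi} \in \subcomplex^\bullet(\Abd^s_1)$). Adding $\tfrac12\dee\Psi_\beta + \dee\Phi_{\tau\inv\varphi}$ and expanding the $\ConnectionDer^1\tau\inv\varphi$ terms using the Leibniz rule for $\ConnectionDer^1$ together with~\eqref{Eqn:iso2} rewritten as $\tau\inv\ConnectionDer^2_\fx\tau = \ConnectionDer^1_\fx + \tauinverse\ad_{\varphi(\fx)}\tau(\qm)$-type relations, one should land on exactly the same polynomial expression. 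The main obstacle will be purely bookkeeping: matching the roughly fifteen-to-twenty distinct monomial types (terms like $\ipG{\varphi\fx_1}{\lbG{\varphi\fx_2}{\varphi\fx_3}}$, $\ipG{\tau\gr_1}{\ConnectionDer^1_{\fx_2}\tauinverse\varphi(\fx_3)}$, $\duality{\fx_1}{\beta\lb{\fx_2}{\fx_3}}$, $\LieDer_{\fx_i}\duality{\fx_j}{\beta\fx_k}$, $\ipG{\tauinverse\varphi\fx_1}{\Curvature^1(\fx_2,\fx_3)}$, and so on) across the two sides and checking that all the cyclic-permutation signs and the factors of $\tfrac12$, $2$, $-1$ cancel correctly; keeping track of where $\tau$ versus $\tauinverse$ appears and using isometry of $\tau$ to move it across $\ipG{\qm}{\qm}$ is the only conceptually delicate point. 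Since Proposition~\ref{Prop:CourantbracketIntermsofData3} already records all the structural identities~\eqref{ipGinvariant}--\eqref{CurvatureConnectionDer} needed for $\Abd^s_1$, no new structural input beyond the isomorphism conditions is required, and the computation, while long, is mechanical.
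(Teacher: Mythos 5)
Your proposal takes essentially the same route as the paper: the paper's own proof is a one-line remark that the identity follows from a direct but cumbersome calculation using Proposition~\ref{GeneralIsomorphism} (in particular Eqs.~\eqref{Eqn:iso1}--\eqref{Eqn:iso4}) together with Lemmas~\ref{Lem:dPhiJJ} and~\ref{Lem:dPhiJJbis}, which is exactly the computation you outline. Your plan correctly identifies all the needed ingredients and the delicate points (applying the two lemmas with the structure data of $\Abd^s_1$, and using that $\tau$ is an isometry of $\ipG{\qm}{\qm}$), so it is a faithful, more detailed version of the paper's argument.
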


\begin{proof}
The proof is a direct but cumbersome calculation using Proposition~\ref{GeneralIsomorphism}, Lemmas~\ref{Lem:dPhiJJ} and~\ref{Lem:dPhiJJbis}, and Eq.~\eqref{Eqn:iso1}.
\end{proof}

\section{Proofs of the main theorems}\label{Sec:proofs}

\subsection{Proof of Theorem~\ref{Thm:main1}}

For $C_{\Econnection}$ defined in~\eqref{AlexXu3form},
it is easy to see the following equivalent formula
\begin{multline*}
C_{\Econnection}(e_1,e_2,e_3)
= \ip{\db{e_1}{e_2}}{e_3}-\thalf\rho(e_1)\ip{e_2}{e_3}
+\thalf\rho(e_2)\ip{e_3}{e_1}-\thalf\rho(e_3)\ip{e_1}{e_2} \\
-\thalf \ip{\Econnection_{e_1}e_2-\Econnection_{e_2}e_1}{e_3}+\CP
.\end{multline*}
Using the expressions of the Dorfman brackets given by
Proposition~\ref{Prop:CourantbracketIntermsofData2}, one obtains
\begin{multline}\label{Eqn:CstandardonE}
C_{\Econnection}(\xi+\gr+x,\eta+\gs+y,\zeta+\gt+z)
= \ipG{\Curvature(x,y)}{\gt}+\ipG{\Curvature(y,z)}{\gr}+\ipG{\Curvature(z,x)}{\gs} \\
-\ipG{\lbG{\gr}{\gs}}{\gt}+\HForm(x,y,z)
,\end{multline}
for all $\xi,\eta,\zeta\in\sections{\Fs}$,
$\gr,\gs,\gt\in\sections{\GG}$, $x,y,z\in\sections{F}$.  This shows
that $C_{\Econnection}$ coincides with the standard $3$-form $C^s$
on $E^s$ (defined by Eq.~\eqref{labelrouge}). Hence it is naive,
closed, does not depend on the choice of $\Fconnection$ and we get
Statements (1), (2).

Finally, we prove Statement (3). For any two   {dissections }
$\dissection_1$ and $\dissection_2$ of $E$, they induce  two
different Courant algebroid structures on $E^s_i=\Fs\oplus \GG\oplus
F$, but $E^s_1$ and $E^s_2$ are isomorphic. On the other hand,   the
$3$-form $C_{\dissection_i}$ on $E$ is the pull back of a standard
naive $3$-form $({C^s})_i$ on $ E^s_i $. Therefore, by Proposition
\ref{Lem:IPullBackDiffCoboundary} and Remark
\ref{Rmk:NaiveIdFormsOnFat}, $C_{\dissection_1 }-C_{\dissection_2}$
must be a coboundary $\dsmile \varphi$, for some naive $2$-form
$\varphi$ on $E$.

\subsection{Proof of Theorem~\ref{Thm:onetoone2} }

We first show how to construct a Courant
algebroid out of a characteristic pair. Let $\Abd$ be a regular
quadratic Lie algebroid and let $( C, \Connection) $ be a
\good\ pair. Set $F=\anchor(\Abd)$ and $\GG=\ker{\anchor}$.
Together, $\Abd$ and $(C,\Connection)$ induce
the $F$-connection $\ConnectionDer^\Connection$ on $\GG$;
the bundle map $\Curvature^\Connection:\wedge^2 F\to\GG$;
and the $3$-form $\HForm^{(C,\Connection)}\in\sections{\wedge^3\Fs}$
defined by
$\HForm^{(C,\Connection)}(x,y,z)=C(\Connection(x),\Connection(y),\Connection(z))$,
for $x,y,z\in \sections{F}$.
By Proposition~\ref{Lem:Csclosediff}, $C$ being a $\hoist$-\good\ $3$-form
implies that $\HForm^{(C,\Connection)}$ satisfies Eq.~\eqref{dMHPontryagin}
and thus the quintuple
${(F,\GG;\ConnectionDer^\Connection,\Curvature^\Connection,\HForm^{(C,\Connection)})}$
satisfies the requirements to construct
a standard Courant algebroid structure on the bundle
$E^s=\Fs\oplus\GG\oplus F$ described
by Theorem~\ref{Thm:StandardCourantStructure}.
We shall denote such a Courant algebroid by $E^s{(\Abd;C,\Connection)}$.
If $\Abd$ is understood, we just write $E^s{(C,\Connection)}$.

In the sequel, the isomorphism class of a regular Courant algebroid
$E$ will be denoted by $\class{E}$. Similarly, the equivalence class of a
characteristic pair $ {(\Abd,C)}$ is denoted by $\class{(\Abd,C)}$.

Define a map $\Bijectivemapinverse$ from the set of equivalence classes of characteristic
pairs to the set of isomorphism classes of regular Courant algebroids sending
$\class{(\Abd,C)}$ to $\class{E^s{(\Abd;C,\Connection)}}$.
Here $\Connection$ is any hoist of $\Abd$ such that
$(C,\Connection)$ is a \good\ pair on $\Abd$.

\begin{prop}\label{Prop:Bijectivemapinversewelldefined}
The map $\Bijectivemapinverse\big(\class{(\Abd,C)}\big)=\class{E^s{(\Abd;C,\Connection)}}$ is well-defined.
\end{prop}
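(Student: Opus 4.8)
The plan is to show that $\Bijectivemapinverse$ does not depend on the two choices that went into its definition: the choice of hoist $\Connection$ making $(C,\Connection)$ a coherent pair, and the choice of representative $(\Abd,C)$ within its equivalence class. For the first issue, fix $\Abd$ and $C$, and suppose $\Connection_1,\Connection_2$ are two hoists such that both $(C,\Connection_1)$ and $(C,\Connection_2)$ are coherent. Their difference $\Connection_2-\Connection_1$ takes values in $\GG$, so it defines a bundle map $J:F\to\GG$. I would then compute the induced data $(\ConnectionDer^{\Connection_i},\Curvature^{\Connection_i},\HForm^{(C,\Connection_i)})$ for $i=1,2$ and check that the two standard Courant algebroids $E^s(C,\Connection_1)$ and $E^s(C,\Connection_2)$ are isomorphic via an explicit isomorphism of the form~\eqref{align:ExpressionofI} in Proposition~\ref{GeneralIsomorphism}, with $\tau=\id_\GG$, $\varphi=J$, and $\beta$ determined by the constraint~\eqref{Eqn:iso1}. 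The point is that changing the hoist by $J$ changes $\ConnectionDer$, $\Curvature$ and $\HForm$ by exactly the amounts prescribed in~\eqref{Eqn:iso2}--\eqref{Eqn:iso4} when $\tau=\id$; since $C$ is held fixed and $\HForm^{(C,\Connection_i)}(x,y,z)=C(\Connection_i(x),\Connection_i(y),\Connection_i(z))$, the difference $\HForm^{(C,\Connection_2)}-\HForm^{(C,\Connection_1)}$ is forced to be the expression appearing in~\eqref{Eqn:iso4}, so the compatibility conditions hold automatically and $\ISO$ is a genuine Courant algebroid isomorphism.

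For the second issue, suppose $(\Abd_1,C_1)$ and $(\Abd_2,C_2)$ are equivalent characteristic pairs, so there is an isomorphism of quadratic Lie algebroids $\sigma:\Abd_1\to\Abd_2$ with $[C_1-\sigma^*C_2]=0$ in $H^3_{\leftrightarrow}(\Abd_1)$, i.e.\ $C_1-\sigma^*C_2=\dee\omega$ for some $\omega\in\subcomplex^2(\Abd_1)$. First, using $\sigma$ to identify $\Abd_1$ with $\Abd_2$, I reduce to the case $\Abd_1=\Abd_2=:\Abd$ and $\sigma=\id$, so that $C_1-C_2=\dee\omega$ with $\omega$ annihilated by $\wedge^2\GG$. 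Next, pick a hoist $\Connection$ with $(C_1,\Connection)$ coherent. By the coherence conditions~\eqref{Align:GoodformsCdt1}--\eqref{Align:GoodformsCdt3}, $C_1$ and $C_2$ agree on $\wedge^3\GG$ and on $\GG\wedge\GG\wedge\Connection(F)$ and on $\GG\wedge\Connection(F)\wedge\Connection(F)$ provided the same hoist is used; so $(C_2,\Connection)$ is coherent too, and the only difference between $E^s(C_1,\Connection)$ and $E^s(C_2,\Connection)$ is the $\HForm$-part, which changes by $\HForm^{(C_2,\Connection)}-\HForm^{(C_1,\Connection)}=-(\dee\omega)(\Connection(x),\Connection(y),\Connection(z))$. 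Writing $\omega$ in the splitting $\GG\oplus F$ as a sum $\thalf\Psi_K+\Phi_{K'}$ of the model $2$-cochains of Lemmas~\ref{Lem:dPhiJJ} and~\ref{Lem:dPhiJJbis} (using that $\omega\in\subcomplex^2(\Abd)$ forces the $\wedge^2\GG$-component to vanish), I recognize this change of $\HForm$ as precisely the one produced in Proposition~\ref{Lem:IPullBackDiffCoboundary} by the isomorphism $\iso(\gr+\fx)=(\gr+K'(\fx))+\fx$ lifted to $\ISO$ as in~\eqref{align:ExpressionofI} with $\tau=\id$, $\varphi=K'$, $\beta=K$. Hence $E^s(C_1,\Connection)\cong E^s(C_2,\Connection)$, which combined with the hoist-independence gives $\Bijectivemapinverse(\class{(\Abd_1,C_1)})=\Bijectivemapinverse(\class{(\Abd_2,C_2)})$.

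The main obstacle I expect is bookkeeping rather than conceptual: matching the explicit change in the triple $(\ConnectionDer,\Curvature,\HForm)$ under a change of hoist or a change of $C$ by a coboundary against the right-hand sides of~\eqref{Eqn:iso2}--\eqref{Eqn:iso4} in Proposition~\ref{GeneralIsomorphism}. In particular, in the second part I need to be careful that an arbitrary $\omega\in\subcomplex^2(\Abd)$ in the splitting decomposes cleanly into $\Psi$- and $\Phi$-type pieces (its $\GG\wedge\GG$-component is killed by the $\subcomplex$-condition, its $\GG\wedge F$-component gives $\Phi_{K'}$ and its $F\wedge F$-component gives $\Psi_K$), and that the resulting $\beta=K$ automatically satisfies~\eqref{Eqn:iso1} because $\dee\omega$ having the required form already encodes the pseudo-metric compatibility. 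Once these identifications are made, everything follows from Propositions~\ref{GeneralIsomorphism} and~\ref{Lem:IPullBackDiffCoboundary} and the two computational lemmas, so no new ideas beyond careful comparison of formulae are needed.
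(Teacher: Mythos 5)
Your overall architecture is the right one (first check independence of the hoist, then independence of the representative of the equivalence class), and your first half is essentially sound, though where you build a single isomorphism with $\tau=\id$, $\varphi=J$ and then solve for $\beta$, the paper instead composes two isomorphisms, passing through the auxiliary coherent pair $(C+\dee\Phi_{J},\Connection)$ via Lemmas~\ref{Lem:goodpairStandardChange}, \ref{Lem:almostidenticalgoodpairsIso} and~\ref{Lem:CplusdPhiJproduceSameAsC}; that detour spares one from having to guess the correct $\beta$ in~\eqref{Eqn:iso4} directly.

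The second half, however, contains a genuine gap: the claim that $(C_2,\Connection)$ is again coherent when $C_1-C_2=\dee\omega$ with $\omega\in\subcomplex^2(\Abd)$ is false in general. Decompose $\omega=\Phi_{J}+\thalf\Psi_K$ in the splitting determined by $\Connection$, as you propose. Lemma~\ref{Lem:dPhiJJ} then gives
\[
(\dee\Phi_{J})(\gr,\gs,\Connection(x))=-\ipG{J(x)}{\lbG{\gr}{\gs}},
\]
which does not vanish unless $J(x)$ is central in each fiber of $\GG$; condition~\eqref{Align:GoodformsCdt2} therefore fails for $C_2$ with the \emph{unchanged} hoist, and condition~\eqref{Align:GoodformsCdt3} is perturbed as well, since $(\dee\Phi_J)(\gr,\Connection(x),\Connection(y))=\ipG{\gr}{\ConnectionDer_{y}J(x)-\ConnectionDer_{x}J(y)+J\lb{x}{y}}-\ipG{J(\cdot)}{\cdots}$ is generically nonzero. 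Consequently $E^s(C_2,\Connection)$ is not even defined by the construction (which requires a coherent pair as input), and your chain of isomorphisms breaks at its first link. The remedy --- and this is precisely the content of the paper's Lemmas~\ref{Lem:goodpairStandardChange} and~\ref{Lem:almostidenticalgoodpairsIso} --- is that adding $\dee\Phi_{J}$ to a coherent form must be accompanied by the simultaneous shift $\Connection\mapsto\Connection-J$ of the hoist. Only the $\anchorUps(\wedge^2\Fs)$-part of $\omega$ (your $\Psi_K$-part) can be absorbed at fixed hoist (Lemma~\ref{Lem:CplusanchorUpsdFomega}); the $\Phi_{J}$-part at fixed hoist is harmless only under the extra hypotheses $J(F)\subset Z(\GG)$ and $\ConnectionDer_xJ(y)-\ConnectionDer_yJ(x)-J\lb{x}{y}=0$, which is exactly the special situation isolated in Lemma~\ref{Lem:CplusdPhiJproduceSameAsC}. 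Once the change of hoist is incorporated, the rest of your bookkeeping goes through and reproduces the paper's Lemma~\ref{Lem:C2C1homologic0producesame}.
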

Before we can prove it, let us   establish some useful facts.
First, we study the change of  hoists. It is obvious that for
any two hoists of $\Abd$: $\Connection$ and $\Connection'$, there
exists a unique map $\DiffConnection:F\to\GG$ such that
$\Connection'(x)=\Connection(x)-\DiffConnection(x)$, $\forall x\in F$.
Recall the notation $\ConnectionDer^{\Connection}$ defined by
formula~\eqref{Eqn:ConnectionDeruperscribt}. It is easy to check that
\begin{gather*}
\ConnectionDer^{\Connection'}_x\gr-\ConnectionDer^{\Connection}_x\gr
=\lbG{\gr}{\DiffConnection(x)}, \\
\Curvature^{\Connection'}(x,y)-\Curvature^{\Connection}(x,y)
=\lb{\DiffConnection(x)}{\DiffConnection(y)}-\ConnectionDer^{\Connection}_x
\DiffConnection(y)+\ConnectionDer^{\Connection}_y \DiffConnection(x)+\DiffConnection[x,y],
\end{gather*}
for all $x,y\in\sections{F}$, $\gr\in\sections{\GG}$.

By Lemma~\ref{Lem:dPhiJJ} and the above two equalities, one can
easily prove the following lemma.
\begin{lem}\label{Lem:goodpairStandardChange}If $(C,\Connection)$ is a
\good\ pair, then so is the pair
$(C+\dee \Phi_{\DiffConnection}, \Connection-\DiffConnection)$. Here
$\Phi_{\DiffConnection}$ is a $2$-form on $\Abd $ defined by
\eqref{Eqt:PhiJJ}.
\end{lem}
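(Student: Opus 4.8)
The plan is to verify directly that the pair $(C+\dee\Phi_{\DiffConnection},\Connection-\DiffConnection)$ satisfies the three defining conditions \eqref{Align:GoodformsCdt1}--\eqref{Align:GoodformsCdt3} of Definition~\ref{Defn:good}, using the hypothesis that $(C,\Connection)$ is $\Connection$-coherent together with the two transformation formulas displayed just above the lemma for how $\ConnectionDer^{\Connection}$ and $\Curvature^{\Connection}$ change under $\Connection\mapsto\Connection-\DiffConnection$, and Lemma~\ref{Lem:dPhiJJ} for the differential of $\Phi_{\DiffConnection}$. Write $C'=C+\dee\Phi_{\DiffConnection}$ and $\Connection'=\Connection-\DiffConnection$. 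Since $C$ is closed and $\dee^2=0$, the form $C'$ is automatically closed, so only the three pointwise identities need checking. Note also that $\Phi_{\DiffConnection}$ lies in $\subcomplex^2(\Abd^s)$, i.e.\ it vanishes whenever two of its arguments are sections of $\GG$; this will immediately kill several terms.

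The first step is condition \eqref{Align:GoodformsCdt1}: evaluate $C'(\gr,\gs,\gt)$ for $\gr,\gs,\gt\in\GG$. Here $\dee\Phi_{\DiffConnection}(\gr,\gs,\gt)=0$ because, reading off Lemma~\ref{Lem:dPhiJJ} with all three slots in $\GG$, every term carries a factor $\DiffConnection$ evaluated on a $\GG$-argument, hence is zero; thus $C'(\gr,\gs,\gt)=C(\gr,\gs,\gt)=-\ipG{\lbG{\gr}{\gs}}{\gt}$, exactly as required (this condition does not involve the hoist at all). The second step is \eqref{Align:GoodformsCdt2}: evaluate $C'(\gr,\gs,\Connection'(x))=C(\gr,\gs,\Connection(x)-\DiffConnection(x))+\dee\Phi_{\DiffConnection}(\gr,\gs,\Connection(x))$. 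The term $C(\gr,\gs,\Connection(x))$ vanishes by \eqref{Align:GoodformsCdt2} for $(C,\Connection)$, the term $-C(\gr,\gs,\DiffConnection(x))=\ipG{\lbG{\gr}{\gs}}{\DiffConnection(x)}$ by \eqref{Align:GoodformsCdt1}, and from Lemma~\ref{Lem:dPhiJJ} the surviving part of $\dee\Phi_{\DiffConnection}(\gr,\gs,\Connection(x))$ is $-\ipG{\DiffConnection(x)}{\lbG{\gr}{\gs}}$ (again all terms with $\DiffConnection$ on a $\GG$-slot die); these two cancel, giving $0$.

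The third and most laborious step is \eqref{Align:GoodformsCdt3}: expand $C'(\gr,\Connection'(x),\Connection'(y))$, substituting $\Connection'=\Connection-\DiffConnection$ in both of the last two slots. This produces $C(\gr,\Connection(x),\Connection(y))$ plus cross-terms $-C(\gr,\Connection(x),\DiffConnection(y))$ and $-C(\gr,\DiffConnection(x),\Connection(y))$ plus $C(\gr,\DiffConnection(x),\DiffConnection(y))$, all of which are rewritten via \eqref{Align:GoodformsCdt1}--\eqref{Align:GoodformsCdt3} for the pair $(C,\Connection)$, together with $\dee\Phi_{\DiffConnection}(\gr,\Connection(x),\Connection(y))$ read off from Lemma~\ref{Lem:dPhiJJ}. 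The target is $\ipG{\gr}{\Curvature^{\Connection'}(x,y)}$, so the identity to confirm is precisely the transformation formula for $\Curvature^{\Connection'}-\Curvature^{\Connection}$ in terms of $\DiffConnection$, $\ConnectionDer^{\Connection}$ and $\lb{\cdot}{\cdot}$, paired against $\gr$; matching terms uses the change-of-hoist formula for $\ConnectionDer^{\Connection'}-\ConnectionDer^{\Connection}$ and the ad-invariance/compatibility relation \eqref{ipGinvariant}. The main obstacle is purely bookkeeping: keeping the signs and cyclic-permutation ($\CP$) terms in Lemma~\ref{Lem:dPhiJJ} straight while simultaneously expanding the bilinearity in two slots, so that everything collapses onto the stated $\Curvature^{\Connection'}$ formula. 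No new idea is needed beyond the displayed transformation equations, Lemma~\ref{Lem:dPhiJJ}, and the compatibility identities of Proposition~\ref{Prop:CourantbracketIntermsofData3}; hence the proof is, as the paper says, a direct verification, and I would simply organize it as the three bullet-checks above and indicate that the third follows by comparing with the change-of-curvature formula.
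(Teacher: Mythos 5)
Your proposal is correct and is precisely the direct verification the paper has in mind: the paper's "proof" consists of the single sentence that the lemma follows from Lemma~\ref{Lem:dPhiJJ} and the two change-of-hoist formulas for $\ConnectionDer^{\Connection}$ and $\Curvature^{\Connection}$, which is exactly the computation you carry out (your check of condition~\eqref{Align:GoodformsCdt3} indeed collapses onto the displayed formula for $\Curvature^{\Connection'}-\Curvature^{\Connection}$ using only fiberwise ad-invariance of $\ipG{\qm}{\qm}$). No gaps.
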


An important fact is the following
\begin{lem}\label{Lem:almostidenticalgoodpairsIso}
The Courant algebroids $E^s{(C,\Connection)}$ and
$E^s{(C+\dee\Phi_{\DiffConnection},\Connection-\DiffConnection)}$
are isomorphic.
\end{lem}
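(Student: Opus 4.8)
The plan is to exhibit an explicit isomorphism between the two standard Courant algebroid structures on $\fsgf$ and to check that it meets the criteria of Proposition~\ref{GeneralIsomorphism}. Write $E^s_1=E^s(C,\Connection)$ and $E^s_2=E^s(C+\dee\Phi_{\DiffConnection},\Connection-\DiffConnection)$, and put $\Connection'=\Connection-\DiffConnection$. By construction these are the standard Courant algebroid structures on the single bundle $\fsgf$ (with $F=\anchor(\Abd)$ and $\GG=\ker\anchor$) determined by the quintuples $(F,\GG;\ConnectionDer^{\Connection},\Curvature^{\Connection},\HForm^{(C,\Connection)})$ and $(F,\GG;\ConnectionDer^{\Connection'},\Curvature^{\Connection'},\HForm^{(C+\dee\Phi_{\DiffConnection},\Connection')})$ respectively. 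A bundle map of the form~\eqref{align:ExpressionofI} preserves the pseudo-metric if and only if~\eqref{Eqn:iso1} holds, and carries the Dorfman bracket of $E^s_1$ to that of $E^s_2$ if and only if~\eqref{Eqn:iso2}--\eqref{Eqn:iso4} hold; this is exactly the computation performed in the proof of Proposition~\ref{GeneralIsomorphism}, now read in the reverse direction. Hence it suffices to produce data $\tau,\varphi,\beta$ satisfying these four conditions.

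The candidate is dictated by the change-of-hoist identities recalled just before Lemma~\ref{Lem:goodpairStandardChange}: take $\tau=\id_{\GG}$ (trivially an automorphism of the bundle of quadratic Lie algebras $\GG$), $\varphi=\DiffConnection$, and let $\beta:F\to\Fs$ be the symmetric bundle map defined by $\duality{\beta(\fx)}{\fy}=-\ipG{\DiffConnection(\fx)}{\DiffConnection(\fy)}$. With this choice~\eqref{Eqn:iso1} collapses to $0$ by symmetry of $\ipG{\qm}{\qm}$, while~\eqref{Eqn:iso2} and~\eqref{Eqn:iso3} become, term by term, exactly the two formulas for $\ConnectionDer^{\Connection'}-\ConnectionDer^{\Connection}$ and for $\Curvature^{\Connection'}-\Curvature^{\Connection}$ recalled there.

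The only genuine work is~\eqref{Eqn:iso4}. First one computes $\HForm^{(C+\dee\Phi_{\DiffConnection},\Connection')}(\fx,\fy,\fz)=(C+\dee\Phi_{\DiffConnection})\big(\Connection'(\fx),\Connection'(\fy),\Connection'(\fz)\big)$ by expanding $\Connection'=\Connection-\DiffConnection$ multilinearly, using the $\Connection$-coherence conditions~\eqref{Align:GoodformsCdt1}--\eqref{Align:GoodformsCdt3} for $C$ together with the formula for $\dee\Phi_{\DiffConnection}$ from Lemma~\ref{Lem:dPhiJJ}; this expresses $\HForm^{(C+\dee\Phi_{\DiffConnection},\Connection')}$ through $\HForm^{(C,\Connection)}$, $\ConnectionDer^{\Connection}$, $\Curvature^{\Connection}$ and $\DiffConnection$. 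Substituting this, together with $\tau=\id_{\GG}$, $\varphi=\DiffConnection$ and the chosen $\beta$, into~\eqref{Eqn:iso4}, and simplifying the $\beta$-terms by means of the metric-compatibility identity~\eqref{ipGinvariant}, one finds that the two sides agree. This is a direct but lengthy bookkeeping computation, of the same nature as the proof of Proposition~\ref{Lem:IPullBackDiffCoboundary}; in fact it is consistent with that proposition, which for the induced ample Lie algebroid isomorphism $\iso(\gr+\fx)=\big(\gr+\DiffConnection(\fx)\big)+\fx$ predicts $\iso^{*}C^s_2-C^s_1=\dee\Phi_{\DiffConnection}$ (note $\Psi_{\beta}=0$ since $\beta$ is symmetric). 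The main obstacle is precisely this verification: one must keep careful track of the cyclic sums, the signs, and of the two different splittings of $\Abd\cong\GG\oplus F$ furnished by $\Connection$ and by $\Connection'$ when evaluating $\Phi_{\DiffConnection}$. Everything else is formal.
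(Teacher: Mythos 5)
Your proof is correct and is essentially the paper's own argument: the paper likewise invokes Proposition~\ref{GeneralIsomorphism} with $\tau=\id_{\GG}$, $\varphi=\DiffConnection$ and $\beta=-\DiffConnection^*\circ\DiffConnection$ (which is exactly your symmetric $\beta$), after recording the same three difference formulas for $\ConnectionDer$, $\Curvature$ and $\HForm$ under the change of hoist. Your remarks that \eqref{Eqn:iso1} holds by symmetry, that the real work is the $\HForm$-comparison in \eqref{Eqn:iso4}, and that the result is consistent with Proposition~\ref{Lem:IPullBackDiffCoboundary} via $\Psi_{\beta}=0$ all match the intended verification.
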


\begin{proof} Write $C^1=C$, $C^2=C+\dee
\Phi_{\DiffConnection}$, $\Connection^1=\Connection$,
$\Connection^2=\Connection-\DiffConnection$ and let  $E_i=E^s{( C^i
,\Connection^i)}$. Clearly $E_i$ is determined by the quintuple
${(F,\GG;\ConnectionDer^i,\Curvature^i,\HForm^i)}$, where
$\ConnectionDer^i=\ConnectionDer^{\Connection^i}$,
$\Curvature^i=\Curvature^{\Connection^i}$ and
$\HForm^i=\HForm^{(C^i,\Connection^i)}$.
We have the following equalities:
\begin{gather*}
\ConnectionDer^2_x \gr-\ConnectionDer^1_x \gr=\lbG{\gr}{\DiffConnection(x)}, \\
\Curvature^2(x,y)-\Curvature^1(x,y)=\lbG{\DiffConnection(x)}{\DiffConnection(y)}
-\ConnectionDer^1_x\DiffConnection(y)+\ConnectionDer^1_y\DiffConnection(x)+\DiffConnection\lb{x}{y}, \\
\begin{split} \HForm^2(x,y,z)-\HForm^1(x,y,z)
&= \ipG{\DiffConnection(x)}{\ConnectionDer^1_y\DiffConnection(z)
-\ConnectionDer^1_z\DiffConnection(y)-\DiffConnection\lb{y}{z}-2\Curvature^1(y,z)} \\
&\quad -2\ipG{\lbG{\DiffConnection(x)}{\DiffConnection(y)}}{\DiffConnection(z)} +\CP,
\end{split}
\end{gather*}
for all $x,y,z\in\sections{F}$, $\gr\in\sections{\GG}$.
Define a map $I:E_1\to E_2$ by
\[ I(\xi+\gr+x)=\big(\xi+\beta(x)-2\DiffConnection^*(\gr)\big)+
\big(\gr+\DiffConnection(x)\big)+x ,\]
where $\xi\in\Fs$, $\gr\in\GG$, $x\in F$, and
$\beta:F\to\Fs$ denotes the map $-\DiffConnection^*\circ\DiffConnection$.
According to Proposition~\ref{GeneralIsomorphism} and
the above equalities, $I$ is a Courant algebroid isomorphism.
\end{proof}

\begin{lem}\label{Lem:CplusdPhiJproduceSameAsC}
Assume that $(C,\Connection)$ and $(C+\dee\Phi_{\JJJ },\Connection)$ are both \good\ pairs,
for some bundle map $\JJJ:F\to\GG$.
Then the Courant algebroids $E^s{(C,\Connection)}$ and $E^s{(C+\dee\Phi_{\JJJ},\Connection)}$ are isomorphic.
\end{lem}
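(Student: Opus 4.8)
The plan is to view both $E^s{(C,\Connection)}$ and $E^s{(C+\dee\Phi_{\JJJ},\Connection)}$ as standard Courant algebroid structures on $\fsgf$ via the splitting $\Abd\cong\GG\oplus F$ induced by the common hoist $\Connection$, and then to write down an explicit isomorphism between them of the type classified in Proposition~\ref{GeneralIsomorphism}. Since $\ConnectionDer^{\Connection}$ and $\Curvature^{\Connection}$ are built from $\Abd$ and $\Connection$ alone, the two standard structures share the same $F$-connection $\ConnectionDer:=\ConnectionDer^{\Connection}$ and curvature $\Curvature:=\Curvature^{\Connection}$; they differ only in their $3$-forms $\HForm^1,\HForm^2$, where $\HForm^i(x,y,z)=C^i\bigl(\Connection(x),\Connection(y),\Connection(z)\bigr)$ with $C^1=C$ and $C^2=C+\dee\Phi_{\JJJ}$. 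Accordingly I will look for an isomorphism with $\tau=\id$ in the notation of Proposition~\ref{GeneralIsomorphism}.

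The first step is to extract what the coherence of \emph{both} pairs forces on $\JJJ$. Subtracting condition~\eqref{Align:GoodformsCdt2} (resp.~\eqref{Align:GoodformsCdt3}) for $C$ and for $C+\dee\Phi_{\JJJ}$, and expanding $\dee\Phi_{\JJJ}$ on the triples $(\gr,\gs,\Connection(x))$ and $(\gr,\Connection(x),\Connection(y))$ by means of Lemma~\ref{Lem:dPhiJJ}, one obtains $\ipG{\JJJ(x)}{\lbG{\gr}{\gs}}=0$ for all $\gr,\gs\in\sections{\GG}$ and $\ConnectionDer_x\JJJ(y)-\ConnectionDer_y\JJJ(x)=\JJJ\lb{x}{y}$ for all $x,y\in\sections{F}$; by nondegeneracy and $\ad$-invariance of $\ipG{\qm}{\qm}$, the first identity says exactly that $\JJJ(x)$ is a central section of $\GG$ for every $x$. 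Evaluating the same expression on $(\Connection(x),\Connection(y),\Connection(z))$ gives $\HForm^2-\HForm^1=-\bigl(\ipG{\JJJ(x)}{\Curvature(y,z)}+\ipG{\JJJ(y)}{\Curvature(z,x)}+\ipG{\JJJ(z)}{\Curvature(x,y)}\bigr)$.

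Then I would define $\ISO\colon E^s{(C,\Connection)}\to E^s{(C+\dee\Phi_{\JJJ},\Connection)}$ to be the map of Proposition~\ref{GeneralIsomorphism} with $\tau=\id$, $\varphi=\thalf\JJJ$, and $\beta\colon F\to\Fs$ the symmetric bundle map $\duality{\beta(x)}{y}=-\tfrac14\ipG{\JJJ(x)}{\JJJ(y)}$; concretely $\ISO(\xi+\gr+x)=\bigl(\xi+\beta(x)-\JJJ^*(\gr)\bigr)+\bigl(\gr+\thalf\JJJ(x)\bigr)+x$, where $\duality{\JJJ^*(\gr)}{x}=\ipG{\gr}{\JJJ(x)}$. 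Conditions~\eqref{Eqn:iso1} and~\eqref{Eqn:iso2} are then immediate from the choice of $\beta$ and the centrality of $\JJJ$, and condition~\eqref{Eqn:iso3} follows by combining that centrality with the identity $\ConnectionDer_x\JJJ(y)-\ConnectionDer_y\JJJ(x)=\JJJ\lb{x}{y}$. The main obstacle is condition~\eqref{Eqn:iso4}: substituting the value of $\HForm^2-\HForm^1$ from the previous step, rewriting the $\beta$-terms with the help of~\eqref{ipGinvariant} and using the closedness of $\JJJ$ once more, the $\ConnectionDer\JJJ$-contributions cancel in pairs and what remains is exactly $-\bigl(\ipG{\JJJ(x)}{\Curvature(y,z)}+\CP\bigr)$, matching $\HForm^2-\HForm^1$. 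The one delicate point is the coefficient: the naive choice $\varphi=\JJJ$ is the isomorphism of Lemma~\ref{Lem:almostidenticalgoodpairsIso} for $\DiffConnection=\JJJ$, which lands not at $E^s{(C+\dee\Phi_{\JJJ},\Connection)}$ but at $E^s{(C+\dee\Phi_{\JJJ},\Connection-\JJJ)}$, overshooting by a factor of $2$ in the curvature terms; this is why $\varphi=\thalf\JJJ$ is the correct normalization. Once~\eqref{Eqn:iso4} is verified, Proposition~\ref{GeneralIsomorphism} guarantees that $\ISO$ is a Courant algebroid isomorphism, which is what was to be proved.
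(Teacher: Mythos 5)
Your proposal is correct and follows essentially the same route as the paper's proof: both extract from the double coherence that $\JJJ(F)\subset Z(\GG)$ and that $\ConnectionDer_x\JJJ(y)-\ConnectionDer_y\JJJ(x)=\JJJ\lb{x}{y}$ (the paper phrases this as $\inserts_{\gr}(\dee\Phi_{\JJJ})=0$ combined with Lemma~\ref{Lem:dPhiJJ}), deduce the same formula for $\HForm^2-\HForm^1$, and exhibit the identical isomorphism with $\tau=\id$, $\varphi=\thalf\JJJ$ and $\beta=-\tfrac{1}{4}\JJJ^*\circ\JJJ$ via Proposition~\ref{GeneralIsomorphism}. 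The extra remark explaining why the normalization is $\thalf\JJJ$ rather than $\JJJ$ is a nice touch but does not change the argument.
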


\begin{proof}Let $E_1=E^s{(C,\Connection)}$, $E_2=E^s{(C+\dee\Phi_{\JJJ},\Connection)}$.
Clearly, $E_i$ is the standard Courant algebroid determined by the data $\ConnectionDer=\ConnectionDer^{\Connection}$,
$\Curvature=\Curvature^\Connection$ and $\HForm^i$, where
$\HForm^1=\HForm^{(C,\Connection)}$, $\HForm^2=\HForm^{(C+\dee\Phi_{\JJJ},\Connection)}$.
Since $(C,\Connection)$ and $(C+\dee\Phi_{\JJJ},\Connection)$ are both \good\ pairs,
we have $\inserts_{\gr}(\dee\Phi_{\JJJ})=0$, for all $\gr\in\GG$.
By Lemma~\ref{Lem:dPhiJJ}, one can check that this is equivalent to two conditions:
\[ \JJJ(F)\subset Z(\GG) \qquad \text{and} \qquad
\ConnectionDer_x\JJJ(y)-\ConnectionDer_y\JJJ(x)-\JJJ\lb{x}{y}=0,
\quad\forall x,y\in\sections{F} .\]
Using these, one has
\[ \HForm^2(x,y,z)-\HForm^1(x,y,z)=-\ipG{\JJJ(x)}{\Curvature(y,z)}+\CP, \]
for all $x,y,z\in \sections{F}$.
Then by Proposition~\ref{GeneralIsomorphism}, the map $I:E_1\to E_2$ defined by
\[ I(\xi+\gr+x)=(\xi+\beta(x)-2\varphis(\gr))+(\gr+\varphi(x))+x ,\]
where $\xi\in\Fs$; $\gr\in\GG$; $x\in F$; $\varphi=\thalf\JJJ$;
and $\beta=-\frac{1}{4}\JJJ^*\circ\JJJ$, is an isomorphism of Courant algebroids.
\end{proof}

The following Corollary implies that $\class{E^s{(\Abd;C,\Connection)}}$
actually does not depend on the choice of the hoist $\Connection$ of $\Abd$.

\begin{cor}\label{Lem:ChastwoConnections}
If $(C,\Connection)$ and $(C,\Connection')$ are both
coherent pairs on $\Abd$, then the Courant algebroids
$E^s{(C,\Connection)}$ and $E^s{(C,\Connection')}$
are isomorphic.
\end{cor}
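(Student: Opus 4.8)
The plan is to obtain Corollary~\ref{Lem:ChastwoConnections} by chaining together the three lemmas just proved, with no new computation required. Write $\Connection'=\Connection-\DiffConnection$ for the unique bundle map $\DiffConnection:F\to\GG$ measuring the difference of the two hoists, and let $\Phi_{\DiffConnection}\in\subcomplex^2(\Abd)$ be the $2$-form attached to $\DiffConnection$ by formula~\eqref{Eqt:PhiJJ}.

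First I would record that $(C+\dee\Phi_{\DiffConnection},\Connection')$ is again a coherent pair; this is exactly Lemma~\ref{Lem:goodpairStandardChange} applied to the coherent pair $(C,\Connection)$. Next, Lemma~\ref{Lem:almostidenticalgoodpairsIso}, with its data $C^1=C$, $C^2=C+\dee\Phi_{\DiffConnection}$, $\Connection^1=\Connection$, $\Connection^2=\Connection'$, furnishes a Courant algebroid isomorphism $E^s{(C,\Connection)}\cong E^s{(C+\dee\Phi_{\DiffConnection},\Connection')}$.

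Now there are two coherent pairs sharing the hoist $\Connection'$: namely $(C,\Connection')$, which is coherent by hypothesis, and $(C+\dee\Phi_{\DiffConnection},\Connection')$, which is coherent by the first step. Applying Lemma~\ref{Lem:CplusdPhiJproduceSameAsC} with fixed hoist $\Connection'$ and bundle map $\JJJ=\DiffConnection$ gives $E^s{(C,\Connection')}\cong E^s{(C+\dee\Phi_{\DiffConnection},\Connection')}$. Composing this with the isomorphism of the previous paragraph yields $E^s{(C,\Connection)}\cong E^s{(C,\Connection')}$, which is the assertion.

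Since the argument is essentially bookkeeping of which pairs are coherent, there is no serious obstacle. The one point that genuinely needs care is to verify the hypotheses of Lemma~\ref{Lem:CplusdPhiJproduceSameAsC}: one must know that $(C+\dee\Phi_{\DiffConnection},\Connection')$ is a \emph{coherent} pair, not merely that $C+\dee\Phi_{\DiffConnection}$ is a closed $3$-form, which is precisely why the preliminary invocation of Lemma~\ref{Lem:goodpairStandardChange} comes first. I would also remark that $\Phi_{\DiffConnection}$, as given by~\eqref{Eqt:PhiJJ}, depends only on the splitting $\GG\oplus F$ and the pairing $\ipG{\qm}{\qm}$, so there is no ambiguity about which ample Lie algebroid structure on $\GG\oplus F$ it is regarded as a $2$-cochain for.
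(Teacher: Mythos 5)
Your argument is correct and is essentially the paper's own proof: both chain Lemma~\ref{Lem:goodpairStandardChange}, Lemma~\ref{Lem:almostidenticalgoodpairsIso} and Lemma~\ref{Lem:CplusdPhiJproduceSameAsC}, differing only in the immaterial choice of which of the two hoists serves as the reference point. Your explicit check that $(C+\dee\Phi_{\DiffConnection},\Connection')$ is coherent before invoking Lemma~\ref{Lem:CplusdPhiJproduceSameAsC} is exactly the point the paper's one-line proof relies on.
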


\begin{proof} Suppose that
$\Connection'=\Connection+\DiffConnection$, for some bundle map
$\DiffConnection:F\to\GG$, then according to Lemma~\ref{Lem:goodpairStandardChange},
the pair $(C+\dee\Phi_{\DiffConnection},\Connection)$ is also \good.
By Lemma~\ref{Lem:almostidenticalgoodpairsIso} and~\ref{Lem:CplusdPhiJproduceSameAsC},
we have isomorphisms
$E^s{(C,\Connection')}\cong E^s{(C+\dee\Phi_{\DiffConnection},\Connection)}\cong E^s{(C,\Connection)}$.
\end{proof}

\begin{lem}\label{Lem:CplusanchorUpsdFomega}
If $(C,\Connection)$ is a \good\ pair, then so is the pair
$(C+\anchorUps(\dF\omega),\Connection)$, for any $2$-form
$\omega\in\sections{\wedge^2\Fs}$.
Moreover, the Courant algebroids $E^s{(C+\anchorUps(\dF\omega),\Connection)}$ and
$E^s{(C,\Connection)}$ are isomorphic.
\end{lem}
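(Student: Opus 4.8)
The plan is to reduce the statement to an already-established isomorphism criterion, namely Proposition~\ref{GeneralIsomorphism}, by exhibiting an explicit automorphism of the underlying bundle $\fsgf$ intertwining the two standard Courant algebroid structures. First I would verify the coherence claim. Given a $2$-form $\omega\in\sections{\wedge^2\Fs}$, the pullback $\anchorUps(\dF\omega)$ is a $3$-form on $\Abd$ that vanishes whenever any of its arguments lies in $\GG$ (since $\anchorUps$ kills $\GG$); in particular $\anchorUps(\dF\omega)\in\subcomplex^3(\Abd)$, so the defining equations \eqref{Align:GoodformsCdt1}, \eqref{Align:GoodformsCdt2}, \eqref{Align:GoodformsCdt3} for $\hoist$-coherence are unaffected by adding it, and $C+\anchorUps(\dF\omega)$ remains closed because $\dee\anchorUps(\dF\omega)=\anchorUps(\dF\dF\omega)=0$ (the anchor is a chain map). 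Hence $(C+\anchorUps(\dF\omega),\Connection)$ is a coherent pair with the \emph{same} hoist $\Connection$, so it induces the same $\ConnectionDer=\ConnectionDer^{\Connection}$ and the same $\Curvature=\Curvature^{\Connection}$; only the $3$-form component changes, from $\HForm^1(x,y,z)=C(\Connection x,\Connection y,\Connection z)$ to $\HForm^2(x,y,z)=\HForm^1(x,y,z)+(\dF\omega)(x,y,z)$.

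Next I would write down the candidate isomorphism $I:E^s(C,\Connection)\to E^s(C+\anchorUps(\dF\omega),\Connection)$ of the form dictated by Proposition~\ref{GeneralIsomorphism}, taking $\tau=\id_{\GG}$, $\varphi=0$, and $\beta:F\to\Fs$ the bundle map associated to $\omega$ by $\duality{\beta(\fx)}{\fy}=\omega(\fx,\fy)$ (equivalently $\beta=\omega^\flat$, using skew-symmetry of $\omega$). Explicitly
\[ I(\xi+\gr+\fx)=\big(\xi+\beta(\fx)\big)+\gr+\fx .\]
With $\tau=\id$ and $\varphi=0$, the compatibility conditions \eqref{Eqn:iso1}--\eqref{Eqn:iso3} of Proposition~\ref{GeneralIsomorphism} collapse: \eqref{Eqn:iso1} becomes $\thalf\duality{\beta(\fx)}{\fy}+\thalf\duality{\fx}{\beta(\fy)}=0$, which holds precisely because $\omega$ is skew; \eqref{Eqn:iso2} and \eqref{Eqn:iso3} become $0=0$ since $\ConnectionDer^1=\ConnectionDer^2$ and $\Curvature^1=\Curvature^2$. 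The only substantive condition is \eqref{Eqn:iso4}, which with $\varphi=0$ reduces to
\[ \HForm^2(\fx,\fy,\fz)-\HForm^1(\fx,\fy,\fz)=\big(\LieDer_{\fx}\duality{\fy}{\beta(\fz)}+\duality{\fx}{\beta\lb{\fy}{\fz}}\big)+\CP ,\]
and the right-hand side is exactly the Koszul-type formula for $(\dF\omega)(\fx,\fy,\fz)=(\dF\beta)(\fx,\fy,\fz)$; comparing with $\HForm^2-\HForm^1=\dF\omega$ settles it. Invoking Proposition~\ref{GeneralIsomorphism} then certifies that $I$ is an isomorphism of Courant algebroids.

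The only real obstacle is bookkeeping: matching the Chevalley--Eilenberg (leafwise de Rham) differential $\dF$ of the $1$-form $\beta$ against the cyclic-sum expression appearing on the right of \eqref{Eqn:iso4}, and checking that the pullback $\anchorUps(\dF\omega)$ evaluated on $(\Connection x,\Connection y,\Connection z)$ really equals $(\dF\omega)(x,y,z)$ — this is immediate from $\anchor\circ\Connection=\id_F$ but must be stated. Everything else is a routine specialization of Proposition~\ref{GeneralIsomorphism}, so the argument is short; I would present the coherence check, define $I$, dispatch \eqref{Eqn:iso1}--\eqref{Eqn:iso3} in one line each, and spend the bulk of the proof identifying the \eqref{Eqn:iso4} discrepancy with $\dF\omega$.
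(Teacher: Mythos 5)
Your argument is essentially the paper's own proof: verify coherence via $\iota_{\gr}\big(\anchorUps(\dF\omega)\big)=0$, observe that the hoist and hence $\ConnectionDer$ and $\Curvature$ are unchanged while $\HForm$ shifts by $\dF\omega$, and then apply Proposition~\ref{GeneralIsomorphism} with $\tau=\id$, $\varphi=0$ and $\beta$ built from $\omega$. The one slip is a sign: with the paper's conventions the map must be $I(\xi+\gr+x)=(\xi-\omega^\sharp(x))+\gr+x$, i.e.\ $\duality{\beta(\fx)}{\fy}=-\omega(\fx,\fy)$, because your choice $\beta=\omega^\sharp$ makes the cyclic sum on the right of~\eqref{Eqn:iso4} evaluate to $-\dF\omega$ rather than the required $+\dF\omega$.
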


\begin{proof}
It is clear that $\inserts_{\gr}(\anchorUps(\dF\omega))=0$, $\forall\gr\in\GG$.
Thus one can easily check that $(C+\anchorUps(\dF\omega),\Connection)$ is a \good\ pair.
Let $E_i$ be the standard Courant algebroid determined by the quintuple
${(F,\GG;\ConnectionDer,\Curvature,\HForm^i)}$, where
$\HForm^1=\HForm^{(C,\Connection)}$,
$\HForm^2=\HForm^{(C+\anchorUps(\dF\omega),\Connection)}$.
Obviously, $\HForm^2-\HForm^1=\dF\omega$.
By Proposition~\ref{GeneralIsomorphism}, we can construct an
isomorphism $I:E_1\to E_2$ by setting
$I(\xi+\gr+x)=(\xi-\omega^\sharp(x))+\gr+x$,
for $\xi\in\Fs$, $\gr\in\GG$, and $x\in F$.
Here $\omega^\sharp:F\to\Fs$ is defined by
$\duality{\omega^\sharp(x)}{y}=\omega(x,y)$,
$\forall x,y\in\sections{F}$.
\end{proof}

\begin{lem}\label{Lem:C2C1homologic0producesame}
Let $(C_1,\Connection_1)$ and $(C_2,\Connection_2)$ be two
\good\ pairs. Assume that $C_2=C_1+\dee\varphi$, for some
$\varphi\in\Chain^2_{\leftrightarrow}(\Abd)$. Then the Courant
algebroids $E^s{(C_1,\Connection_1)}$ and $E^s{(C_2,\Connection_2)}$ are isomorphic.
\end{lem}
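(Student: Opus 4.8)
The plan is to reduce the statement, one ingredient at a time, to the special cases already settled in Lemmas~\ref{Lem:almostidenticalgoodpairsIso}, \ref{Lem:CplusanchorUpsdFomega} and~\ref{Lem:CplusdPhiJproduceSameAsC}, using the hoists as the main bookkeeping device. First I would arrange that the two coherent pairs use the \emph{same} hoist: writing $\Connection_2=\Connection_1-\DiffConnection$ for the unique bundle map $\DiffConnection:F\to\GG$, Lemma~\ref{Lem:goodpairStandardChange} shows that $(C_1+\dee\Phi_{\DiffConnection},\Connection_2)$ is again a coherent pair, and Lemma~\ref{Lem:almostidenticalgoodpairsIso} gives $E^s(C_1,\Connection_1)\cong E^s(C_1+\dee\Phi_{\DiffConnection},\Connection_2)$. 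Since $\Phi_{\DiffConnection}\in\subcomplex^2(\Abd)$, the cochain $\varphi-\Phi_{\DiffConnection}$ again lies in $\subcomplex^2(\Abd)$ and $C_2=(C_1+\dee\Phi_{\DiffConnection})+\dee(\varphi-\Phi_{\DiffConnection})$; hence, after replacing $(C_1,\Connection_1,\varphi)$ by $(C_1+\dee\Phi_{\DiffConnection},\Connection_2,\varphi-\Phi_{\DiffConnection})$, I may assume $\Connection_1=\Connection_2=:\Connection$.

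Next I would use the hoist $\Connection$ to identify $\Abd$ with $\Abd^s=\GG\oplus F$ and split $\varphi$ along this decomposition. A $2$-form on $\GG\oplus F$ has a $\wedge^2\GG^*$-, a $\GG^*\otimes F^*$- and a $\wedge^2 F^*$-part, and the hypothesis $\varphi\in\subcomplex^2(\Abd)$ annihilates the first. Using the nondegenerate fibre metric $\ipG{\qm}{\qm}$ to identify $\GG^*$ with $\GG$, the $\GG^*\otimes F^*$-part is exactly $\Phi_{\JJJ}$ for a uniquely determined bundle map $\JJJ:F\to\GG$ as in~\eqref{Eqt:PhiJJ}, while the $\wedge^2 F^*$-part is $\anchorUps\omega$ for some $\omega\in\sections{\wedge^2\Fs}$, where $\anchorUps$ is pullback along the corestricted anchor $\Abd\to F$. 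Thus $\varphi=\Phi_{\JJJ}+\anchorUps\omega$, and since $\anchor:\Abd\to F$ is a morphism of Lie algebroids, pullback commutes with the differentials, so $\dee(\anchorUps\omega)=\anchorUps(\dF\omega)$ and
\[ C_2\;=\;C_1+\dee\varphi\;=\;\bigl(C_1+\anchorUps(\dF\omega)\bigr)+\dee\Phi_{\JJJ}. \]

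Finally I would remove the two pieces in turn. Lemma~\ref{Lem:CplusanchorUpsdFomega} applies to the first: $(C_1+\anchorUps(\dF\omega),\Connection)$ is again a coherent pair and $E^s(C_1,\Connection)\cong E^s(C_1+\anchorUps(\dF\omega),\Connection)$. Then both $(C_1+\anchorUps(\dF\omega),\Connection)$ --- just obtained --- and $(C_1+\anchorUps(\dF\omega)+\dee\Phi_{\JJJ},\Connection)=(C_2,\Connection)$ --- by hypothesis --- are coherent pairs differing by the coboundary of $\Phi_{\JJJ}$, so Lemma~\ref{Lem:CplusdPhiJproduceSameAsC} yields $E^s(C_1+\anchorUps(\dF\omega),\Connection)\cong E^s(C_2,\Connection)$. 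Chaining the isomorphisms from the first and last steps gives $E^s(C_1,\Connection_1)\cong E^s(C_2,\Connection_2)$.

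The genuinely routine points are the splitting of $\varphi$ and the commutation $\dee\circ\anchorUps=\anchorUps\circ\dF$ for the Lie algebroid morphism $\anchor:\Abd\to F$. The step needing the most care is the coherence bookkeeping: Lemmas~\ref{Lem:CplusanchorUpsdFomega} and~\ref{Lem:CplusdPhiJproduceSameAsC} demand that their input pairs be \emph{coherent}, so one must apply the $\anchorUps(\dF\omega)$-modification first --- it is Lemma~\ref{Lem:CplusanchorUpsdFomega} that certifies coherence of the intermediate pair --- and one cannot skip the hoist alignment of the first step, because for a general $\varphi\in\subcomplex^2(\Abd)$ the pair $(C_2,\Connection_1)$ need not be coherent.
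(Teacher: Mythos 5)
Your argument is correct and follows essentially the same route as the paper: align the two hoists using $\Phi_{\DiffConnection}$ via Lemmas~\ref{Lem:goodpairStandardChange} and~\ref{Lem:almostidenticalgoodpairsIso}, decompose the remaining cochain in $\subcomplex^2(\Abd)$ as $\Phi_{\JJJ}+\anchorUps\omega$ using the hoist-induced splitting, and then invoke Lemmas~\ref{Lem:CplusanchorUpsdFomega} and~\ref{Lem:CplusdPhiJproduceSameAsC}. The only (harmless) difference is that you transport $C_1$ toward $C_2$ rather than the reverse, and you are somewhat more explicit than the paper about verifying the coherence hypotheses of the intermediate pairs.
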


\begin{proof}
Choose $\DiffConnection:F\to\GG$ so that $\Connection_2=\Connection_1+\DiffConnection$.
Lemma~\ref{Lem:almostidenticalgoodpairsIso} implies that
\[ E^s{(C_2,\Connection_2)}=E^s{(C_2,\Connection_1+\DiffConnection)}
\cong E^s{(C_2+\dee \Phi_{\DiffConnection},\Connection_1)} =
E^s{(C_1+\dee (\varphi+\Phi_{\DiffConnection}),\Connection_1)} .\]
The hoist $\Connection_1$ produces a decomposition $\Abd\cong\GG\oplus F$.
Hence, there exist $\JJJ':F\to\GG$ and $\omega\in \sections{\wedge^2\Fs}$
such that
$\varphi+\Phi_{\DiffConnection}=\Phi_{\JJJ'}+\anchorUps\omega$.
By Lemmas~\ref{Lem:CplusanchorUpsdFomega} and~\ref{Lem:CplusdPhiJproduceSameAsC}, we have
\begin{equation*}
E^s{(C_1+\dee(\varphi+\Phi_{\DiffConnection}),\Connection_1)}
= E^s{(C_1+\dee\Phi_{\JJJ'}+\anchorUps(\dF\omega),\Connection_1)}
\cong E^s{(C_1+\dee \Phi_{\JJJ'},\Connection_1)}\cong E^s{(C_1,\Connection_1)}
. \qedhere \end{equation*}
\end{proof}

\begin{proof}[Proof of Proposition~\ref{Prop:Bijectivemapinversewelldefined}]
First we show that $\Bijectivemapinverse$ is well defined. Suppose
that two characteristic pairs $(\Abd, C)$ and $(\overline{\Abd},\overline{C })$
are equivalent, i.e.\ there is an isomorphism
$\sigma:\Abd\to\overline{\Abd}$ such that
$C-\sigma^*\overline{C}=\dee\varphi$,
for some $\varphi\in\Chain^2_{\leftrightarrow}(\Abd)$. We need to
show that there is an isomorphism of Courant algebroids
$E^s{(\Abd;C,\Connection)}\cong E^s{(\overline{\Abd};\overline{C},\overline{\Connection})}$.
It is quite obvious that
$(\sigma^*\overline{C},\sigma\inverse\circ\overline{\Connection})$
is also a \good\ pair on $\Abd$ and
\[ E^s{(\Abd;\sigma^*\overline{C},\sigma\inverse\circ\overline{\Connection})}
\cong E^s(\overline{\Abd};\overline{C},\overline{\Connection}) .\]
Since $C-\sigma^*\overline{C}=\dee\varphi$, Lemma~\ref{Lem:C2C1homologic0producesame}
gives the isomorphism
\[ E^s{(\Abd;C,\Connection)}\cong E^s{(\Abd;\sigma^*\overline{C},
\sigma\inverse\circ\overline{\Connection})} .\]
Thus we conclude that
$E^s{(\Abd;C,\Connection)}\cong E^s{(\overline{\Abd};\overline{C},\overline{\Connection})}$
and $\Bijectivemapinverse$ is well defined.
\end{proof}

With these preparations, we are able to prove the second main theorem in this paper.

\begin{proof}[Proof of Theorem~\ref{Thm:onetoone2}]
(1) Let us denote by $\ICRC$ the set of isomorphism classes of regular
Courant algebroids, and by $\ECCC$ the set of equivalence classes of
characteristic pairs. We need to establish a one-to-one
correspondence between $\ICRC$ and $\ECCC$.

For any regular Courant algebroid $E$, one may choose a dissection so as to identify
$E$ with $E^s=\Fs\oplus \GG\oplus F$. In Section~\ref{Sec:standard3form}, we
showed that there is a standard naive $3$-cocycle $C^s$ on $E^s$ and
it corresponds to a \good\ $3$-form on $\Abd^s=\GG\oplus F$, also
denoted by $C^s$.

So we can define a map $\Bijectivemap:\ICRC\to\ECCC$,
$\class{E}\mapsto\class{(\Abd^s,C^s)}$. By
Proposition~\ref{Lem:IPullBackDiffCoboundary}, this map is well
defined and does not depend on the choice of the dissection.

On the other hand, Proposition~\ref{Prop:Bijectivemapinversewelldefined}
indicates that there is a well defined map $\Bijectivemapinverse:\ECCC\to\ICRC$.
An easy verification shows that it is indeed the inverse of $\Bijectivemap$.
This completes the proof.

(2) This follows from Proposition~\ref{pro:3formclass}.
\end{proof}

\subsection{Classification}

\begin{lem}\label{Pro:allcoherentforms}
Let
$C\in\Gamma(\wedge^3\Abd^*)$ be  a coherent $3$-form.
Then any other coherent $3$-form is of the form
$C+\anchorUps(\varpi)+\dee\Phi_{\DiffConnection}$,
where $\varpi\in\sections{\wedge^3\Fs}$ is a closed $3$-form on $F$
and $\Phi_J$ is defined by~\eqref{Eqt:PhiJJ}.
\end{lem}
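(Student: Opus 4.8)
The plan is to reduce to the case of two $3$-forms that are coherent with respect to the \emph{same} hoist, and there to read off their difference directly from the defining conditions of Definition~\ref{Defn:good}.

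First I would fix a hoist $\Connection$ for which $C$ is $\Connection$-coherent, and let $C'$ be any coherent $3$-form, say $\Connection'$-coherent for some hoist $\Connection'$; write $\Connection'=\Connection-\DiffConnection$ for the unique bundle map $\DiffConnection\colon F\to\GG$. By Lemma~\ref{Lem:goodpairStandardChange}, $\tilde C:=C+\dee\Phi_{\DiffConnection}$ is again a coherent $3$-form, and it is $\Connection'$-coherent. It therefore suffices to show that the difference $C'-\tilde C$ of two $\Connection'$-coherent $3$-forms is of the form $\anchorUps(\varpi)$ for some closed $\varpi\in\sections{\wedge^3\Fs}$; combining this with $\tilde C=C+\dee\Phi_{\DiffConnection}$ gives $C'=C+\anchorUps(\varpi)+\dee\Phi_{\DiffConnection}$, which is the asserted form.

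To identify $C'-\tilde C$, I would use the splitting $\Abd=\GG\oplus\Connection'(F)$ together with multilinearity, so that a $3$-form on $\Abd$ is determined by its values on triples each of whose entries lies in $\GG$ or in $\Connection'(F)$. Conditions~\eqref{Align:GoodformsCdt1}, \eqref{Align:GoodformsCdt2} and~\eqref{Align:GoodformsCdt3} express the value of \emph{any} $\Connection'$-coherent $3$-form on the triples with at least one entry in $\GG$ in terms of $\lbG{\cdot}{\cdot}$, $\ipG{\cdot}{\cdot}$ and $\Curvature^{\Connection'}$ only, hence independently of the form. So $C'-\tilde C$ vanishes on all such triples and is determined by its restriction to $\wedge^3\Connection'(F)$: setting $\varpi(x,y,z):=(C'-\tilde C)(\Connection'(x),\Connection'(y),\Connection'(z))$ for $x,y,z\in\sections{F}$ defines a section $\varpi$ of $\wedge^3\Fs$ with $C'-\tilde C=\anchorUps(\varpi)$.

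It remains to see that $\varpi$ is closed. Coherent $3$-forms are closed by definition, so $\dee(\anchorUps(\varpi))=\dee(C'-\tilde C)=0$. Viewing the anchor as a fiberwise surjective Lie algebroid morphism $\anchor\colon\Abd\to F$ onto the integrable distribution $F$, the pullback $\anchorUps$ is a cochain map, whence $\anchorUps(\dF\varpi)=\dee(\anchorUps(\varpi))=0$; since $\anchorUps$ is injective on forms, $\dF\varpi=0$. The step I expect to demand the most attention is the reduction in the second paragraph --- checking that modifying $C$ by $\dee\Phi_{\DiffConnection}$ shifts the associated hoist exactly from $\Connection$ to $\Connection'$, so that $C'$ and $\tilde C$ genuinely share a hoist and \eqref{Align:GoodformsCdt2}--\eqref{Align:GoodformsCdt3} can be compared entry by entry --- but this is precisely the content of Lemma~\ref{Lem:goodpairStandardChange}, so the remainder is formal.
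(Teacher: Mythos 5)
Your proof is correct and follows essentially the same route as the paper: the paper also writes $\hoist'=\hoist-\DiffConnection$ and asserts by ``a direct calculation'' that $C'-C-\dee\Phi_{\DiffConnection}$ is a pullback along the anchor. Your use of Lemma~\ref{Lem:goodpairStandardChange} to reduce to two forms coherent for the \emph{same} hoist, and then reading off the difference from conditions~\eqref{Align:GoodformsCdt1}--\eqref{Align:GoodformsCdt3}, is exactly the calculation the paper leaves implicit, and the closedness argument via injectivity of $\anchorUps$ as a cochain map is also the intended one.
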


\begin{proof}
Assume that $C$ (resp. $C'$) is  $\hoist$ (resp. $\hoist'$)-coherent, for
 some hoist $\hoist$ (resp.  $\hoist'$) of $\Abd$.
The difference between $\hoist$ and $\hoist'$
determines a  bundle map $\DiffConnection:F\to\GG$
 such that
$\hoist'=\hoist-\DiffConnection$.
A direct calculation shows that the $3$-form
$C'-C-\dee\Phi_{\DiffConnection}$ must be the pull back of some
$3$-form $\varpi$ by the anchor  map $a$.
Conversely, it is easy to verify that $C+\anchorUps(\varpi)+\dee\Phi_{\DiffConnection}$
is coherent w.r.t.\ $\hoist'=\hoist-\DiffConnection$.
\end{proof}

\begin{defn}
A $3$-form $\vartheta\in \Chain^3_{\leftrightarrow}(\Abd)$
of a quadratic Lie algebroid $\Abd$ is said
to be \textbf{intrinsic}, if there exists a coherent $3$-form
$C$ and   an automorphism
$\sigma:~\Abd\rightarrow\Abd$ over the identity such that
$\vartheta=\sigma^*C-C$.
\end{defn}

It is simple to see that every intrinsic $3$-form is closed, and therefore
defines a class in  $H^3_{\leftrightarrow}(\Abd)$.
It turns out that this is the obstruction class
to lift the automorphism $\sigma$ to a Courant
algebroid automorphism.

\begin{lem}
\label{lem:I}
Let $\Abd_E$ be the ample Lie algebroid of a regular Courant algebroid
$E$ and $\vartheta=\sigma^*C-C$   an intrinsic form of $\Abd_E$. Then
$\class{\vartheta}=0$ if and only if $\sigma$ can be lifted to an
automorphism $\widetilde{\sigma}$ of the  Courant algebroid
$E$.
\end{lem}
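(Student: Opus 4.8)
The strategy is to reduce everything to the standard model and then invoke the dictionary between coherent pairs and Courant algebroids established in the previous subsection. First I would fix a dissection identifying $E$ with a standard Courant algebroid $E^s$ on $\fsgf$, so that the ample Lie algebroid becomes $\Abd^s=\GG\oplus F$ equipped with a coherent $3$-form $C^s$ (Section~\ref{Sec:standard3formAbdsEs}), and an automorphism $\sigma$ of $\Abd_E\cong\Abd^s$ gets transported to an automorphism of $\Abd^s$. Now suppose $\widetilde\sigma$ is a Courant algebroid automorphism of $E$ lifting $\sigma$. By Proposition~\ref{GeneralIsomorphism} (applied with $E^s_1=E^s_2=E^s$), $\widetilde\sigma$ has the form~\eqref{align:ExpressionofI} with data $(\tau,\varphi,\beta)$ satisfying~\eqref{Eqn:iso1}--\eqref{Eqn:iso4}, and its underlying ample automorphism is $\iso$ of Proposition~\ref{Lem:IPullBackDiffCoboundary}, which is exactly $\sigma$ (the piece $\tau\oplus\varphi$ on $\GG\oplus F$). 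That same proposition then gives $\sigma^*C^s-C^s = \iso^*C^s-C^s = \dee(\thalf\Psi_\beta+\Phi_{\tau\inv\varphi})$, and since $\Psi_\beta,\Phi_{\tau\inv\varphi}\in\subcomplex^2(\Abd^s)$, this exhibits $\vartheta=\sigma^*C-C$ as a coboundary in the subcomplex $\subcomplex\graded(\Abd)$, i.e.\ $\class{\vartheta}=0$ in $H^3_{\leftrightarrow}(\Abd)$.

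For the converse, assume $\class{\vartheta}=0$, so $\vartheta=\sigma^*C^s-C^s=\dee\varphi$ for some $\varphi\in\subcomplex^2(\Abd^s)$. Pick a hoist $\Connection$ of $\Abd^s$ such that $(C^s,\Connection)$ is a coherent pair; then $(\sigma^*C^s,\sigma\inv\rond\Connection)$ is also a coherent pair, and since $E^s\cong E^s(\Abd^s;C^s,\Connection)$ (by construction of the standard form, Theorem~\ref{Thm:onetoone2}), it suffices to produce an isomorphism of Courant algebroids $E^s(\Abd^s;C^s,\Connection)\to E^s(\Abd^s;\sigma^*C^s,\sigma\inv\rond\Connection)$ whose induced map on ample Lie algebroids is $\sigma$. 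Because $\sigma^*C^s=C^s+\dee\varphi$, Lemma~\ref{Lem:C2C1homologic0producesame} already furnishes \emph{some} Courant isomorphism between these two Courant algebroids; the point that needs care is that the isomorphism can be chosen to cover $\sigma$ and not merely the identity. I would handle this by tracking the base maps through the chain of isomorphisms in the proof of Lemma~\ref{Lem:C2C1homologic0producesame}: each of the elementary moves (Lemmas~\ref{Lem:almostidenticalgoodpairsIso}, \ref{Lem:CplusdPhiJproduceSameAsC}, \ref{Lem:CplusanchorUpsdFomega}) is realized by an explicit isomorphism of the form~\eqref{align:ExpressionofI} with $\tau=\id_\GG$ and trivial $F$-component, hence covers the identity on $\Abd^s$; composing them with the ``tautological'' isomorphism $E^s(\Abd^s;\sigma^*C^s,\sigma\inv\rond\Connection)\to E^s(\Abd^s;C^s,\Connection)$ induced by $\sigma$ itself yields a Courant automorphism of $E^s$ covering $\sigma$, which transports back to the desired lift $\widetilde\sigma$ of the original $\sigma$ on $E$.

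The main obstacle is exactly this bookkeeping in the converse direction: Lemma~\ref{Lem:C2C1homologic0producesame} is stated as an abstract isomorphism statement, and one must verify that the composite isomorphism it produces indeed descends to the prescribed automorphism $\sigma$ of the ample Lie algebroid rather than to some other automorphism differing from $\sigma$ by an inner piece. Concretely, one decomposes $\varphi+\Phi_{\DiffConnection}=\Phi_{\JJJ'}+\anchorUps\omega$ using the hoist and checks that the corresponding isomorphisms from Proposition~\ref{GeneralIsomorphism} all have $\tau=\id$ and vanishing $F\to\GG$ component, so that on the ample quotient they act trivially; then the only source of a nontrivial base action is the tautological map coming from $\sigma$, which is what we want. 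Everything else in the argument is the routine translation between $E$ and $E^s$ via the dissection and Remark~\ref{Rmk:NaiveIdFormsOnFat}, together with direct application of the isomorphism lemmas already proved.
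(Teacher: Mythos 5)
Your ``only if'' direction is fine and is the natural argument: a lift must have the shape of Proposition~\ref{GeneralIsomorphism} with $E^s_1=E^s_2$, and Proposition~\ref{Lem:IPullBackDiffCoboundary} then exhibits $\sigma^*C^s-C^s=\dee(\thalf\Psi_{\beta}+\Phi_{\tau\inv\varphi})$ with primitive in $\subcomplex^2(\Abd)$, so $\class{\vartheta}=0$.

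The converse, however, breaks exactly at the step you yourself flag as ``bookkeeping.'' It is not true that all the elementary moves used in Lemma~\ref{Lem:C2C1homologic0producesame} cover the identity of $\Abd^s$: the isomorphism of Lemma~\ref{Lem:CplusdPhiJproduceSameAsC} has $F\to\GG$ component $\varphi=\thalf\JJJ$, hence induces $\gr+\fx\mapsto\gr+\thalf\JJJ(\fx)+\fx$ on the ample quotient (here, unlike in Lemma~\ref{Lem:almostidenticalgoodpairsIso}, the hoist is unchanged, so nothing compensates this). Consequently your composite covers $\sigma$ only up to an extra automorphism $\gr+\fx\mapsto\gr+\thalf\JJJ'(\fx)+\fx$, and this defect is not removable: when $Z(\GG)\neq 0$, the summand $\dee\Phi_{\JJJ'}$ of $\dee\varphi$ is a coboundary in $\subcomplex^\bullet(\Abd)$ that need not be realizable by any lift of the prescribed $\sigma$. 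Concretely, let $M$ be the $3$-torus, $F=TM$, $\GG=M\times\RR$ abelian with the standard metric, $\ConnectionDer$ the trivial flat connection, $\Curvature=dx^2\wedge dx^3$, $\HForm=0$, and $\sigma(\gr+\fx)=\gr+dx^1(\fx)+\fx$. One computes $\sigma^*C^s-C^s=\anchorUps(dx^1\wedge dx^2\wedge dx^3)=-\dee\Phi_{J}$ with $J=dx^1$, so $\class{\vartheta}=0$ in $H^3_{\leftrightarrow}(\Abd^s)$; yet condition~\eqref{Eqn:iso4} for a lift, evaluated on the coordinate frame and integrated over the torus, reads $0=-2\,\mathrm{vol}$, so no $\beta$ exists and $\sigma$ does not lift. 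Thus the ``if'' direction requires either an additional hypothesis (for instance $Z(\GG)=0$, as in Example~\ref{Rmk:Severa}), or a direct proof that the horizontal part of $\sigma^*C^s-C^s-\dee\Phi_{\tau\inv\varphi}$ is $\dF$-exact on $F$ --- and the latter does not follow from $\class{\vartheta}=0$ alone, since $\dee\subcomplex^2(\Abd)$ contains the classes $\dee\Phi_{J}=\anchorUps\mu_J$, $\mu_J(\fx,\fy,\fz)=-\ipG{J(\fx)}{\Curvature(\fy,\fz)}+\CP$, with $J(F)\subset Z(\GG)$, and $\mu_J$ need not be $\dF$-exact. Any complete proof has to confront this point rather than route around it through Lemma~\ref{Lem:C2C1homologic0producesame}.
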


Let  $\mathbb{I}$ denote the subset
in $H^3_{\leftrightarrow}(\Abd)$ consisting
of the cohomology classes  of all  intrinsic $3$-forms.
The following result is an easy
consequence of  Lemma \ref{Pro:allcoherentforms}.

\begin{prop}~~
\begin{itemize}
\item[1)] $\mathbb{I}$ consists of the cohomology classes
 $[\sigma^*C_0-C_0]$ for a fixed coherent $3$-form
$C_0$ and all automorphisms
$\sigma:~\Abd\rightarrow\Abd$ over the identity;
\item[2)]  $\mathbb{I}$ is an abelian subgroup of
$\anchorUps H^3(F)$.
\end{itemize}
\end{prop}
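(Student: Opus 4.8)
The plan is to reduce both claims to Lemma~\ref{Pro:allcoherentforms}, after first recording three elementary facts about a quadratic Lie algebroid automorphism $\sigma:\Abd\to\Abd$ lying over the identity. Since $\anchor\circ\sigma=\anchor$, the map $\sigma$ restricts to an automorphism of $\GG=\ker\anchor$, so $\sigma^*$ preserves the subcomplex $\subcomplex^\bullet(\Abd)$ and commutes with $\dee$; moreover $\sigma^*\circ\anchorUps=\anchorUps$, so $\sigma^*$ fixes every form $\anchorUps\varpi$ with $\varpi\in\sections{\wedge^\bullet\Fs}$; and if $C_0$ is a fixed $\hoist$-coherent $3$-form, then $\sigma^*C_0$ is again coherent --- in fact $(\sigma\inv\circ\hoist)$-coherent --- which one checks by unwinding \eqref{Align:GoodformsCdt1}--\eqref{Align:GoodformsCdt3}, using that $\sigma$ preserves $\lbG{\qm}{\qm}$ and $\ipG{\qm}{\qm}$ and that $\Curvature^{\sigma\inv\circ\hoist}=\sigma\inv\circ\Curvature^{\hoist}$. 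In particular, applying Lemma~\ref{Pro:allcoherentforms} to $C_0$ and $\sigma^*C_0$ produces a closed $3$-form $\varpi_\sigma\in\sections{\wedge^3\Fs}$ and a bundle map $\DiffConnection_\sigma:F\to\GG$ with $\sigma^*C_0-C_0=\anchorUps\varpi_\sigma+\dee\Phi_{\DiffConnection_\sigma}$.

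For part~1), I would fix $C_0$ and take an arbitrary intrinsic form $\vartheta=\sigma^*C-C$, with $C$ coherent and $\sigma$ an automorphism over the identity. Lemma~\ref{Pro:allcoherentforms} writes $C=C_0+\anchorUps\varpi+\dee\Phi_{\DiffConnection}$ for some closed $\varpi$ and some $\DiffConnection:F\to\GG$, and then the first two facts above give
\[ \vartheta=(\sigma^*C_0-C_0)+(\sigma^*\anchorUps\varpi-\anchorUps\varpi)+\dee(\sigma^*\Phi_{\DiffConnection}-\Phi_{\DiffConnection})=(\sigma^*C_0-C_0)+\dee\psi, \]
with $\psi=\sigma^*\Phi_{\DiffConnection}-\Phi_{\DiffConnection}\in\subcomplex^2(\Abd)$. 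Since $\vartheta$ and $\sigma^*C_0-C_0$ both lie in $\subcomplex^3(\Abd)$, this shows $\class{\vartheta}=\class{\sigma^*C_0-C_0}$ in $H^3_{\leftrightarrow}(\Abd)$. Conversely, as $\sigma^*C_0$ is coherent, each $\sigma^*C_0-C_0$ lies in $\subcomplex^3(\Abd)$ and is intrinsic by definition (take $C=C_0$). Hence $\mathbb{I}$ is exactly the set of classes $\class{\sigma^*C_0-C_0}$ as $\sigma$ ranges over automorphisms over the identity.

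For part~2), the third fact gives $\class{\sigma^*C_0-C_0}=\anchorUps\class{\varpi_\sigma}\in\anchorUps H^3(F)$, so $\mathbb{I}\subseteq\anchorUps H^3(F)$ by part~1). For the group structure, $\sigma=\id$ gives $0\in\mathbb{I}$, and a composite $\sigma_1\circ\sigma_2$ of automorphisms over the identity is again one; writing $\sigma_1^*C_0-C_0=\anchorUps\varpi_{\sigma_1}+\dee\Phi_{\DiffConnection_{\sigma_1}}$ and applying $\sigma_2^*$, the first two facts give $\class{\sigma_2^*(\sigma_1^*C_0-C_0)}=\class{\sigma_1^*C_0-C_0}$, whence
\[ \class{(\sigma_1\circ\sigma_2)^*C_0-C_0}=\class{\sigma_2^*(\sigma_1^*C_0-C_0)}+\class{\sigma_2^*C_0-C_0}=\class{\sigma_1^*C_0-C_0}+\class{\sigma_2^*C_0-C_0}. \]
This proves closure under addition, and the special case $\sigma_1=\sigma$, $\sigma_2=\sigma\inv$ yields $\class{(\sigma\inv)^*C_0-C_0}=-\class{\sigma^*C_0-C_0}$, hence closure under inverses. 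Finally $\mathbb{I}$ is abelian, being a subgroup of the abelian group $H^3_{\leftrightarrow}(\Abd)$.

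I do not expect a genuine obstacle: the argument is routine once Lemma~\ref{Pro:allcoherentforms} is in hand. The only point needing care is the bookkeeping of the coboundary terms $\dee\Phi_{\DiffConnection}$ --- verifying that their $\sigma^*$-pullbacks remain in $\subcomplex^2(\Abd)$ and that $\sigma^*$ genuinely leaves the terms $\anchorUps\varpi$ unchanged --- which is exactly where the hypothesis that $\sigma$ lies over the identity is used.
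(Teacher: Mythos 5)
Your proof is correct and follows exactly the route the paper intends: the paper states this proposition without proof as ``an easy consequence of Lemma~\ref{Pro:allcoherentforms}'', and your argument is precisely that reduction, with the three preliminary facts (preservation of $\subcomplex^\bullet(\Abd)$, invariance of $\anchorUps\varpi$ under $\sigma^*$, and coherence of $\sigma^*C_0$ with respect to $\sigma\inv\circ\hoist$) supplying the details correctly. The bookkeeping of the $\dee\Phi_{\DiffConnection}$ terms and the cocycle identity $\class{(\sigma_1\circ\sigma_2)^*C_0-C_0}=\class{\sigma_1^*C_0-C_0}+\class{\sigma_2^*C_0-C_0}$ are exactly what is needed.
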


Hence the abelian group $\mathbb{I}$ measures how
many automorphisms of $\Abd$ that are essentially nontrivial,
 i.e., cannot   be lifted to the Courant algebroid level.

Let $\Phi$  be the natural map from
the isomorphism classes of regular Courant algebroids to
the isomorphism classes of quadratic Lie algebroids with vanishing
first Pontryagin class. We are now ready to prove the last main theorem
of this paper.

\begin{thm}\label{Thm:classification}
For any quadratic Lie algebroid $\ample$ with vanishing first
Pontryagin class,
$\Phi^{-1}([\ample])$ is isomorphic to
 $\frac{\anchorUps H^3(F)}{\mathbb{I}}\cong
\frac{H^3(F)}{(\anchorUps)\inverse(\mathbb{I})}$.
\end{thm}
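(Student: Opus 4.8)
The plan is to transport the classification through the bijection of Theorem~\ref{Thm:onetoone2} and reduce it to a bookkeeping problem about coherent $3$-forms on the fixed quadratic Lie algebroid $\ample$. Since the first Pontryagin class of $\ample$ vanishes, Proposition~\ref{Prop:PontryaginCoherentExtensionAllEquivalent} lets me fix, once and for all, a coherent $3$-form $C_0$ on $\ample$. Theorem~\ref{Thm:onetoone2} identifies $\Phi\inverse([\ample])$ with the set of equivalence classes of characteristic pairs $(\Abd,C)$ for which $\Abd\cong\ample$ as a quadratic Lie algebroid; transporting $C$ across such an isomorphism (which preserves coherence, being an isomorphism of quadratic Lie algebroids, with a suitably transported hoist) replaces every such pair by one of the form $(\ample,C)$. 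So the first step is the identification of $\Phi\inverse([\ample])$ with the set of coherent $3$-forms on $\ample$ modulo the relation $C_1\approx C_2$ iff $[C_1-\sigma^*C_2]=0$ in $H^3_{\leftrightarrow}(\ample)$ for some automorphism $\sigma$ of $\ample$ over the identity (only automorphisms over $\id_M$ occur since the characteristic distribution $F$ is held fixed).

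Next I would introduce the comparison map $\theta\colon C\mapsto [C-C_0]\in H^3_{\leftrightarrow}(\ample)$. Lemma~\ref{Pro:allcoherentforms} writes $C-C_0=\anchorUps\varpi+\dee\Phi_J$ with $\varpi$ a closed $3$-form on $F$ and $J\colon F\to\GG$ a bundle map; since $\Phi_J\in\subcomplex^2(\ample)$ the term $\dee\Phi_J$ is exact in $\subcomplex^\bullet(\ample)$, so $\theta(C)=\anchorUps[\varpi]\in\anchorUps H^3(F)$, and the converse half of the same lemma shows $\theta$ maps \emph{onto} $\anchorUps H^3(F)$. The goal is then to show that $\theta$ descends to a bijection
\[ \Phi\inverse([\ample])\ \xrightarrow{\ \sim\ }\ \frac{\anchorUps H^3(F)}{\mathbb{I}} . \]

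The heart of the proof is the interaction of $\theta$ with the automorphism part of $\approx$. The observation to isolate is that an automorphism $\sigma$ of $\ample$ over $\id_M$ satisfies $\anchor\circ\sigma=\anchor$, hence induces the identity on $F=\ample/\GG$; consequently $\sigma^*$ fixes $\anchorUps(\sections{\wedge^3\Fs})$ and preserves the subcomplex $\subcomplex^\bullet(\ample)$, which gives $[\sigma^*(C-C_0)]=[C-C_0]$ and therefore $[\sigma^*C-C]=[\sigma^*C_0-C_0]$. In particular $\sigma^*C-C$ is an intrinsic form, so $[\sigma^*C-C]\in\mathbb{I}$; and by the structural description of $\mathbb{I}$ established just before the theorem, every class in $\mathbb{I}$ is of the form $[\sigma^*C_0-C_0]$ for some such $\sigma$. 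From this I deduce both halves at once: if $C_1\approx C_2$ via $\sigma$ then $[C_1-C_2]=[\sigma^*C_2-C_2]=[\sigma^*C_0-C_0]\in\mathbb{I}$, so $\theta$ is constant modulo $\mathbb{I}$ on $\approx$-classes; conversely, if $\theta(C_1)-\theta(C_2)=[C_1-C_2]\in\mathbb{I}$, choosing $\sigma$ over the identity with $[C_1-C_2]=[\sigma^*C_0-C_0]$ gives $[C_1-\sigma^*C_2]=[C_1-C_2]-[\sigma^*C_2-C_2]=0$, so $C_1\approx C_2$. Together with the surjectivity of $\theta$ this yields the displayed bijection. (That two coherent forms with the same $\theta$-value already yield \emph{isomorphic} Courant algebroids is exactly Lemma~\ref{Lem:C2C1homologic0producesame}, subsumed in the injectivity argument once one recalls that $\approx$ is the equivalence of characteristic pairs.)

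The remaining identification $\anchorUps H^3(F)/\mathbb{I}\cong H^3(F)/(\anchorUps)\inverse(\mathbb{I})$ is the first isomorphism theorem for the homomorphism of abelian groups $\anchorUps\colon H^3(F)\to H^3_{\leftrightarrow}(\ample)$, whose image is $\anchorUps H^3(F)$ and whose kernel lies inside $(\anchorUps)\inverse(\mathbb{I})$. The step I expect to be the real obstacle is the third paragraph: one must check carefully that $\sigma^*$ acts trivially on $\anchorUps H^3(F)$ and that passing from the ``$\sigma=\id$'' identification to the full relation $\approx$ introduces \emph{exactly} the quotient by $\mathbb{I}$ --- neither more nor less. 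This is the geometric content already packaged in Lemma~\ref{lem:I} and the structural description of $\mathbb{I}$, and the argument above amounts to assembling those ingredients in the correct order.
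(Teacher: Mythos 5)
Your proposal is correct and follows essentially the same route as the paper: reduce via Theorem~\ref{Thm:onetoone2} to coherent $3$-forms on $\ample$ modulo the equivalence of characteristic pairs, use Lemma~\ref{Pro:allcoherentforms} to get surjectivity onto $\anchorUps H^3(F)$, and identify the ambiguity with $\mathbb{I}$. Your explicit verification that $[\sigma^*C - C] = [\sigma^*C_0 - C_0]$ for $\sigma$ over $\id_M$ is exactly the content the paper packages into part 1) of the proposition describing $\mathbb{I}$, so the two arguments coincide up to phrasing (comparison map versus transitive action with isotropy group).
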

\begin{proof}
By Theorem \ref{Thm:onetoone2}, we understand that every isomorphism
class of Courant algebroid in $\Phi^{-1}([\ample])$ can be
characterized by an equivalence class of
 characteristic pairs $ (\ample, C)$, where $C$ is
a coherent $3$-form.   Lemma \ref{Pro:allcoherentforms}
implies that $\anchorUps(H^3(F))$ acts transitively on $\Phi^{-1}([\ample])$.
Now we determine the isotropy group of this action.
Assume  that
$\varpi\in\sections{\wedge^3\Fs}$ is a closed $3$-form such that the
characteristic pairs $(\Abd,C)$ and $(\Abd,C+\anchorUps(\varpi))$
are equivalent.  By definition, there exists an automorphism
$\sigma$ of $\Abd$  such that
$[\sigma^*(C)-(C+\anchorUps(\varpi))]=0$ in $H^3_{\leftrightarrow}(\Abd)$.
 Hence it follows that $\class{\anchorUps(\varpi)}\in\mathbb{I}$.
Conversely, it is  simple  to see that $\mathbb{I}$ acts trivially on
$\Phi^{-1}([\ample])$.
This concludes the proof of the theorem.
\end{proof}

 $(\anchorUps)\inverse(\mathbb{I})$ is called  the intrinsic group of $\Abd$.
Note that the classification problem investigated here differs from
that studied by   \v{S}evera and Bressler  even in the transitive
case.
 \v{S}evera and Bressler classified the isomorphism classes of transitive Courant
algebroid extensions of a given Lie algebroid \cites{MR2360313, Severa},
while we are interested in isomorphism classes of Courant algebroids
themselves.

\begin{rmk}
{\em
Note that $a^*:H^3(F)\to H^3_{\leftrightarrow}(\Abd)$ is
not necessarily injective.

Let $M$ be a contact manifold with $\theta\in\Omega^1(M)$ being the contact
$1$-form.
Consider the transitive Lie algebroid $\Abd=(\RR\times M)\oplus TM$,
which is a central extension of Lie algebroid $TM$ by the exact $2$-form
$d\theta$ considered as a 2-cocycle of the Lie algebroid $TM$.
Equip the standard fiberwise metric on $\RR\times M\to M$.
It is clear that $\Abd$ is a quadratic Lie algebroid.
It is easy to check that $a^*[\theta d\theta]=0\in H^3_{\leftrightarrow}(\Abd)$.
However $[\theta d\theta]$ is a nontrivial class in $H^3_{DR}(M)$.

For instance, if $M=S^3$ with the canonical contact structure,
$H^3_{DR}(M)=\RR$ and $a^*H^3_{DR}(M)=0$. As a consequence, there is
a unique Courant algebroid (up to isomorphisms) whose induced
quadratic Lie algebroid is isomorphic to $\Abd=(\RR\times M)\oplus
TM$. On the other hand, according to~\cite{MR2360313,Severa},
Courant algebroid extensions of $\Abd=(\RR\times M)\oplus TM$ are
parametrized by $\RR$.}
\end{rmk}

In general, it is hard to describe explicitly the intrinsic group
$(\anchorUps)\inverse(\mathbb{I})$.
The following example, due to  \v{S}evera \cite{Severa}, gives
a nice
description of the intrinsic group in a special case.
We are grateful to \v{S}evera, who pointed  out this
example to us, which led us to find an error in an earlier version
of the draft.

\begin{ex}\label{Rmk:Severa}
Let  $\LieG$ be a compact  Lie algebra such that both
$\mathrm{Out}(\LieG)$  and $Z(\LieG)$ are  trivial. Assume that $M$
is  a connected and simply connected manifold. Choose an
ad-invariant non-degenerate bilinear form on  $\LieG$. Consider the
canonical quadratic Lie algebroid $\Abd= (M\times \LieG)\oplus TM$.
The Cartan $3$-form induces a  coherent $3$-form
 ${C}\in\sections{\wedge^3\Abd^*}$:
\begin{equation}\label{cartanform} {C}(\gr+x,\gs+y, \gt+z) =
-\ipLieG{\lbLieG{\gr}{\gs}}{\gt}  ,\end{equation}
$\forall \gr,\gs,\gt\in\cinf{M,\LieG}$ and $x,y,z\in\sections{TM}$.

\newcommand{\Adjoint}{\mathrm{Ad}}
\newcommand{\adjoint}{\mathrm{ad}}


Let $G$ be a connected and simply connected Lie group  whose Lie algebra is
$\LieG$. Then any automorphism $\sigma$ of $\Abd$ is  determined by
a smooth map $\tau:~M\to   G$, where $\sigma$ can be written as
$$
\sigma (\gr+v)=\Adjoint_{\tau (x)}(\gr-J(v))+v,\qquad~\forall~\gr\in
 { \LieG},  ~v\in T_xM.
$$
Here   $J:~ TM\rightarrow   \LieG$ is  the bundle map
defined  by:
\begin{equation}\label{Joutoftau}
J(v)=L_{\tau(x)^{-1}}(\tau_*(v)),\quad\forall ~v\in T_x M.
\end{equation}
To see this, one can consider $\Abd$ as the Atiyah Lie algebroid of
the trivial principal bundle $M\times G\to M$.
Then  $\sigma$ is induced by the principal bundle automorphism
defined by the gauge group element $\tau:~M\to   G$.

Using Lemma \ref{Lem:dPhiJJ}, one  obtains
$$
(\sigma^*C-C)(\gr+x,\gs+y, \gt+z) =-(\dee \Phi_J)(\gr+x,\gs+y,
\gt+z)+\ipLieG{\lbLieG{J(x)}{J(y)}}{J(z)}.
$$
It thus follows that
$$
\class{\sigma^*C-C}=-(J\circ\anchor)^*\class{\alpha}=-\anchorUps(J^*\class{\alpha}),
$$
where $\alpha =([\cdot , \cdot], \cdot) \in \wedge^3 \LieG^*$.
 It is
simple to see that $J^*({\alpha})=\tau^* (\tilde{\alpha})$,
where $\tilde{\alpha}$ is the corresponding Cartan $3$-form on $G$.
As a consequence,  the intrinsic group
$(\anchorUps)\inverse(\mathbb{I})$ is generated by elements of the
form $  \tau^*\class{\tilde{\alpha}}$, where $\tau~:M\rightarrow
G$ is any smooth map.
Note that $(\anchorUps)\inverse(\mathbb{I})$
 is discrete, because when $\tau_1$ and $\tau_2$ are
homotopic, then
$\tau_1^*\class{\tilde{\alpha}}=\tau_2^*\class{\tilde{\alpha}}$.

Here is  one  example. Let   $M=G=SU(2)$. Then $H^3_{DR}(M)\cong \RR$
and the intrinsic group $(\anchorUps)\inverse(\mathbb{I})$ is
isomorphic to $\ZZ$ with the generator being
 $\class{\tilde{\alpha}}$.
Hence by Theorem \ref{Thm:classification}, the isomorphism classes
of Courant algebroids with ample Lie algebroid isomorphic to the
quadratic Lie algebroid $(S^3 \times \frak{su}(2)
)\oplus T S^3 $  is  parametrized by $\RR/\mathbb{Z}\cong S^1$.
\end{ex}

%
\begin{bibdiv}

\begin{biblist}

\bib{AlekseevXu}{article}{
    title = {Derived brackets and Courant algebroids},
    author = {Alekseev, Anton},
    author = {Xu, Ping},
    year = {2001},
    note = {unpublished manuscript available at \url{http://www.math.psu.edu/ping/papers.html}},
}

\bib{MR2360313}{article}{
   author={Bressler, Paul},
   title={The first Pontryagin class},
   journal={Compos. Math.},
   volume={143},
   date={2007},
   number={5},
   pages={1127--1163},
   issn={0010-437X},
   review={\MR{2360313 (2009f:57041)}},
}

\bib{ELW}{article}{
author={Evens, Sam}
author={Lu, Jianghua}
author={Weinstein, Alan}
   title={Transverse measures, the modular class, and a cohomology pairing for Lie algebroids},
   journal={Quart. J. Math. Oxford (2)},
   volume={50},
   date={1999},
   pages={417--436},
}

\bib{CF1}{article}{
    AUTHOR = {Crainic, Marius}
AUTHOR ={Fernandes, Rui},
     TITLE = {Secondary characteristic classes of {L}ie algebroids},
 BOOKTITLE = {Quantum field theory and noncommutative geometry},
    SERIES = {Lecture Notes in Phys.},
    VOLUME = {662},
     PAGES = {157--176},
 PUBLISHER = {Springer},
   ADDRESS = {Berlin},
      YEAR = {2005},
   MRCLASS = {53C05 (58A20 58H05)},
  MRNUMBER = {MR2179182 (2007b:53047)},
MRREVIEWER = {Joana M. Nunes da Costa},
}

\bib{MR998124}{article}{
   author={Courant, Theodore James},
   title={Dirac manifolds},
   journal={Trans. Amer. Math. Soc.},
   volume={319},
   date={1990},
   number={2},
   pages={631--661},
   issn={0002-9947},
   review={\MR{998124 (90m:58065)}},
}

    \bib{Fernandes}{article}{
    AUTHOR = {Fernandes, Rui Loja},
     TITLE = {Lie algebroids, holonomy and characteristic classes},
   JOURNAL = {Adv. Math.},
  FJOURNAL = {Advances in Mathematics},
    VOLUME = {170},
      YEAR = {2002},
    NUMBER = {1},
     PAGES = {119--179},
      ISSN = {0001-8708},
     CODEN = {ADMTA4},
   MRCLASS = {58H05 (53C05 53C12 53C29 53D17 57R20 57R30)},
  MRNUMBER = {MR1929305 (2004b:58023)},
MRREVIEWER = {Jan Kubarski},
}

\bib{GG}{article}{
   author={Ginot, Gr\'egory},
   author={Gr\"utzmann, Melchior},
   title={Cohomology of Courant algebroids with split base},
   journal={J. Symplectic Geom.},
   volume={7},
   date={2009},
   number={3},
   pages={311--335},
}

\bib{MR1472888}{article}{
   author={Liu, Zhang-Ju},
   author={Weinstein, Alan},
   author={Xu, Ping},
   title={Manin triples for Lie bialgebroids},
   journal={J. Differential Geom.},
   volume={45},
   date={1997},
   number={3},
   pages={547--574},
   issn={0022-040X},
   review={\MR{1472888 (98f:58203)}},
}

\bib{math/9910078}{thesis}{
    title = {Courant algebroids, derived brackets and even symplectic supermanifolds},
    author = {Roytenberg, Dmitry},
    type={Ph.D. Thesis},
    organization={U.C. Berkeley},
    eprint = {math/9910078},
}

\bib{MR1958835}{article}{
   author={Roytenberg, Dmitry},
   title={On the structure of graded symplectic supermanifolds and Courant
   algebroids},
   conference={
      title={Quantization, Poisson brackets and beyond},
      address={Manchester},
      date={2001},
   },
   book={
      series={Contemp. Math.},
      volume={315},
      publisher={Amer. Math. Soc.},
      place={Providence, RI},
   },
   date={2002},
   pages={169--185},
   review={\MR{1958835 (2004i:53116)}},
}

\bib{MR1656228}{article}{
   author={Roytenberg, Dmitry},
   author={Weinstein, Alan},
   title={Courant algebroids and strongly homotopy Lie algebras},
   journal={Lett. Math. Phys.},
   volume={46},
   date={1998},
   number={1},
   pages={81--93},
   issn={0377-9017},
   review={\MR{1656228 (2000c:17036)}},
}

\bib{Severa}{article}{
  author = {{\v{S}}evera, Pavol},
  title = {Letter to Alan Weinstein},
  note = {available at \url{http://sophia.dtp.fmph.uniba.sk/~severa/letters/}},
}

\bib{MR1183483}{article}{
   author={Stasheff, Jim},
   title={Differential graded Lie algebras, quasi-Hopf algebras and higher
   homotopy algebras},
   conference={
      title={Quantum groups},
      address={Leningrad},
      date={1990},
   },
   book={
      series={Lecture Notes in Math.},
      volume={1510},
      publisher={Springer},
      place={Berlin},
   },
   date={1992},
   pages={120--137},
   review={\MR{1183483 (93j:17055)}},
}

\bib{MathieuXu}{article}{
   author={Sti{\'e}non, Mathieu},
   author={Xu, Ping},
   title={Modular classes of Loday algebroids},
   language={English, with English and French summaries},
   journal={C. R. Math. Acad. Sci. Paris},
   volume={346},
   date={2008},
   number={3-4},
   pages={193--198},
   issn={1631-073X},
   review={\MR{2393640 (2009g:53125)}},
}

\bib{MR2178250}{article}{
   author={Vaisman, Izu},
   title={Transitive Courant algebroids},
   journal={Int. J. Math. Math. Sci.},
   date={2005},
   number={11},
   pages={1737--1758},
   issn={0161-1712},
   review={\MR{2178250 (2006f:53124)}},
}

\end{biblist}
\end{bibdiv}


\end{document}